%
%
%


\documentclass[11pt,leqno,amscd,amssymb,verbatim, url]{amsart}



\usepackage{amscd,amssymb,verbatim,url,amsfonts,amssymb,amsthm,wrapfig}


\newtheorem{theorem}{Theorem}[section]
\newtheorem{lemma}[theorem]{Lemma}
\newtheorem{prop}[theorem]{Proposition}
\newtheorem{cor}[theorem]{Corollary}
\newtheorem{hyp}[theorem]{Hypotheses}

\theoremstyle{definition}
\newtheorem{definition}[theorem]{Definition}

\theoremstyle{remark}
\newtheorem{remark}[theorem]{Remark}
\newtheorem{conditions}[theorem]{Conditions}

\numberwithin{equation}{section}


\newcommand{\wF}{{\widetilde F}}

\newcommand{\wC}{{\widetilde C}}
\newcommand{\wE}{{\widetilde E}}

\newcommand{\wrad}{{\widetilde\rad}}
\newcommand{\fb}{{\mathfrak b}}
\newcommand{\wfb}{{\widetilde{\mathfrak b}}}

\newcommand{\wAmod}{\widetilde A{\text{\rm -mod}}}

\newcommand{\wfa}{{\widetilde{\mathfrak a}}}
\newcommand{\gr}{\text{\rm gr}\,}

\newcommand{\sB}{{\mathcal {B}}}
\newcommand{\sC}{{\mathcal {C}}}

\newcommand{\Imm}{{\text{\rm Im}}\,}
\newcommand{\sA}{{\mathcal A}}

\newcommand{\Spec}{{\text{\rm Spec}}}

\newcommand{\Ext}{{\text{\rm Ext}}}

\newcommand{\x}{\text{\bf \it x}}

\newcommand{\fa}{{\mathfrak a}}

{

\newcommand{\Amod}{A\mbox{--mod}}

\newcommand{\Hom}{\text{\rm Hom}}

\newcommand{\End}{\operatorname{End}}

\newcommand{\sO}{{\mathcal{O}}}

\newcommand{\rad}{\operatorname{rad}}

\newcommand{\wM}{{\widetilde{M}}}
\newcommand{\wA}{{\widetilde{A}}}
\newcommand{\wB}{{\widetilde{B}}}
\newcommand{\wR}{{\widetilde{R}}}
\newcommand{\wU}{{\widetilde U}}
\newcommand{\wDelta}{{\widetilde{\Delta}}}

\newcommand{\wN}{{\widetilde{N}}}
\newcommand{\wD}{{\widetilde{D}}}
\newcommand{\wP}{{\widetilde{P}}}
\newcommand{\wQ}{{\widetilde{Q}}}
\newcommand{\wJ}{{\widetilde{J}}}

\newcommand{\wL}{{\widetilde{L}}}


\newcommand{\blist}{\begin{list}{\rom{(\roman{enumi})}}{\setlength
{\leftmarg in}{0em} \setlength{\itemindent}{7ex}
\setlength{\labelsep}{2ex}\setlength{\listparindent}{\parindent}
\usecounter{enumi}}}
\newcommand{\elist}{\end{list}}

\begin{document}

 \title[Forced Gradings in Integral Quasi-hereditary Algebras]{Forced Gradings in Integral Quasi-hereditary  Algebras \\ with Applications to Quantum Groups}


\author{Brian J. Parshall}
\address{Department of Mathematics, University of Virginia, Charlottesville, VA 22903} 
\email{bjp8w@virginia.edu}
\thanks{Research supported in part by the National Science
Foundation DMS-1001900}

\author{Leonard L. Scott}
\address{Department of Mathematics, University of Virginia, Charlottesville, VA 22903} 
\email{lls2l@virginia.edu}

\subjclass[2000]{Primary 17B55, 20G; Secondary 17B50}

\date{}

\begin{abstract}
 Let $\sO$ be a discrete valuation ring with fraction field $K$ and residue field $k$. A quasi-hereditary algebra $\wA$ over $\sO$  provides a bridge between the representation theory of the
quasi-hereditary algebra $\wA_K:=K\otimes \wA$ over the field $K$ and the quasi-hereditary algebra 
$A_k:=k\otimes_\sO\wA$ over $k$.  In one important example, $\wA_K$--mod is a full subcategory of
the category of modules for a quantum enveloping algebra while $\wA_k$--mod is a full subcategory of
the category of modules for a reductive group in positive characteristic. This paper considers first the question
of when the positively graded algebra $\gr \wA:= \bigoplus_{n\geq 0}(\wA\cap\rad^n\wA_K)/(\wA\cap\rad^{n+1}\wA_K)$ is quasi-hereditary. A main result gives sufficient conditions that $\gr\wA$ be quasi-hereditary. The
main requirement is that each graded module $\gr\wDelta(\lambda)$ arising from a $\wA$-standard (Weyl) module $\wDelta(\lambda)$ have an irreducible head. An additional hypothesis requires that the graded algebra $\gr \wA_K$ be quasi-hereditary, a property recently proved \cite{PS9} to hold in some important cases involving quantum enveloping algebras. 
 In the case where $\wA$ arises from regular dominant weights for a 
 quantum enveloping algebra at a primitive $p$th root of unity
for a prime $p>2h-2$ (where $h$ is the Coxeter number), a second main result shows that $\gr\wA$ is quasi-hereditary. The proof of
this difficult result involves interesting new methods involving integral quasi-hereditary algebras. It
also depends on previous work \cite{PS9} of the authors, including a continuation of the methods there
involving tightly graded subalgebras, and a development of a quantum deformation theory over $\sO={\mathbb Z}_{(p)}(\zeta)$, where $\zeta$ is a $p$th root of unity. This (new) deformation theory, given in Appendix II (\S7), extends work of Andersen-Jantzen-Soergel \cite{AJS}, and is worthy of attention in its own right.
 As we point out, the present paper provides an essential
step in our work on $p$-filtrations of Weyl modules for reductive algebraic groups over fields of positive
characteristic.
\end{abstract}

\maketitle


\section{Introduction} Quasi-hereditary algebras are certain finite dimensional
algebras over a field which arise naturally
in Lie-theoretic representation theory, in both geometric and algebraic contexts. For example, if $G$ is a reductive algebraic group in positive
characteristic, the category of finite dimensional rational $G$-modules generated by the irreducible
modules having highest weights in a finite saturated set of dominant weights is equivalent to the
module category $\Amod$ for a quasi-hereditary algebra $A$; see \cite{CPS-1}. In a geometric vein,
certain categories of perverse sheaves are similarly equivalent to $\Amod$ for a quasi-hereditary
algebra $A$; see \cite{PS0}. On the other hand, quasi-hereditary algebras are defined abstractly, so
they are
objects of interest to finite dimensional algebraists; see \cite{R}, for example.

In \cite{PS9}, the authors studied naturally occurring hypotheses which, given a quasi-hereditary algebra $A$,
guarantee a quasi-hereditary structure on the positively graded algebra
$$\gr A:=\bigoplus \rad^iA/\rad^{i+1}A.$$
 For
general quasi-hereditary algebras $A$, it is quite unexpected that $\gr A$ should also be quasi-hereditary, though \cite{PS9} demonstrated that this does occur in several situations of interest in the theory of quantum and algebraic
groups.

In \cite{CPS1a}, the notion of an {\it integral} quasi-herediary $\wA$ over a commutative ring
$\sO$ was introduced. See \S2 below for a summary. In this paper, $\sO$ is taken to be a DVR
with fraction field $K$ and residue field $k=\sO/(\pi)$. We take up the question of when, for the integral quasi-hereditary
algebra $\wA$ over $\sO$, the positively graded algebra
$$\gr\wA:=\bigoplus_{i\geq 0}\frac{\wA\cap \rad^i\wA_K}{\wA\cap\rad^{i+1}\wA_K}$$
is an integral quasi-hereditary algebra over $\sO$. The (integral) quasi-heredity of $\wA$ implies, for example, that the
$K$-algebra $\wA_K:=K\otimes_{\sO}\wA$  and the $k$-algebra $A=\wA_k:=\wA\otimes_{\sO}k$ are both quasi-hereditary. We will always assume that $\wA$ is associated with a fixed weight
poset $\Lambda$ indexing its irreducible modules, over either $K$ or $k$, and is ``split" (see \S2).
The algebra $\wA$ provides a link between the representation theory of the $K$-algebra $\wA_K$
and the $k$-algebra $A$. 

Of course, if $\gr\wA$ is quasi-hereditary over $\sO$, then $\gr\wA_K$ is also quasi-hereditary (with the
same weight poset $\Lambda$ as $\wA$, and with standard modules $\gr\wDelta(\lambda)_K$)---already a
difficult property to prove. We simply assume, as a starting point, that $\gr\wA_K$ is quasi-hereditary. In fact, in the applications we have in mind, this assumption will be valid. The first
 main theorem, Theorem \ref{mainIntGrthm}, establishes that $\gr \wA$ is integral quasi-hereditary (with
the same poset as $\wA$ and standard modules $\gr\wDelta(\lambda)$) if and only if each $\gr\wDelta(\lambda)$, $\lambda\in\Lambda$, has a simple head. Here\footnote{These gr-constructions make sense for any integral domain and lattices for algebras over them, and, as far as we know, have
not been previously studied.}
$$\gr\wDelta(\lambda):=\bigoplus_{i\geq 0}\frac{\wDelta(\lambda)\cap\rad^i\wDelta(\lambda)_K}{
\wDelta(\lambda)\cap\rad^{i+1}\wDelta(\lambda)_K}.$$
Corollary \ref{Nameless-1} gives conditions under which  $\wA$-modules $\wM$ with standard filtrations give rise to
$\gr\wA$-modules $\gr\wM$ with standard filtrations.

In \S4, we show that the simple head hypothesis required in Theorem  \ref{mainIntGrthm} holds, provided certain conditions involving an integral subalgebra $\wfa$ of $\wA$ are satisfied.\footnote{ The use of such a special subalgebra has been previously explored by us in \cite{PS9}, and this section is
a natural extension of those results.}  The conditions are stated in  Conditions \ref{conditions} and
the hypotheses of  Theorem \ref{bigtheorem}. The latter theorem is
based on the (new) notion of a tight lattice (see Defintion \ref{tightdef1} just below).
All this is pulled together in Corollary \ref{cor4.4} and Theorem \ref{integralquasihereditary}.

In \S5, we consider the case in which the integral quasi-hereditary algebra $\wA$ arises as a suitable
``regular weight"
homomorphic image of a quantum enveloping algebra at a root of unity. (The regular blocks of the famous $q$-Schur algebras provide examples
of such algebras.) The first main result here is Theorem \ref{preliminaryresult} which verifies that Conditions \ref{conditions}
hold in case $p>h$ (the Coxeter number) and $\wA$ is associated to a finite ideal of regular weights. 
Then the main result in this section, given in Theorem \ref{mainquantumresult}, shows that, when
$p\geq 2h-2$, $\gr\wA$ is a quasi-hereditary algebra with standard modules $\gr\wDelta(\lambda)$. 
Its proof completes the verification process begun in Theorem \ref{preliminaryresult}
by checking that the hypotheses of Theorem \ref{bigtheorem} hold, so that all the
results of \S4 are available. 

A key step in the proof of Theorem \ref{preliminaryresult}, namely, the verification of Condition \ref{conditions}(5), is postponed
to Appendix II (\S6), entitled ``Quantum deformation theory over $\sO$." The appendix is inspired by, and
heavily uses, the deformation theory in the paper \cite{AJS}, in which Andersen-Jantzen-Soergel showed that
the 
(known to be true) Lusztig conjecture for quantum groups at a $p$th root of unity implies the characteristic
$p$ version of the conjecture for primes $p\gg 0$, depending on the root system. The paper \cite{AJS} presents
two parallel deformation theories, the first in positive characteristic for restricted Lie algebra representations,
and the second for the small quantum group at a root of unity $\zeta$. In our appendix, we extend the
results of \cite{AJS} to include a third deformation theory, for a natural integral form of the small quantum group over an appropriate DVR $\sO\subseteq{\mathbb Q}(\zeta)$ with $\zeta$ a $p$th root of unity. We require that $p$ be a prime $>h$. The main consequence of this theory is the demonstration of a positive grading
on the $\sO$-integral form which base changes to the grading on the small (``regular block") quantum
group over ${\mathbb Q}(\zeta)$
studied in \cite[\S17, \S18]{AJS}. See Theorem \ref{newandimportant}, which can also be regarded as
a main result of this paper. This is just what is needed to complete the verification of Condition
\ref{conditions}(5), as required in the proof of Theorem \ref{preliminaryresult}(c).  Observe that \cite[\S18]{AJS} is quoted earlier in checking Condition \ref{conditions}(1) (in  ``case 2" of \cite{AJS}). 

One area where the results of this paper are very useful is the study of filtrations of standard
modules both for $\wA$ and for $A=\wA_k$.  For example, an important question concerns the existence of
$p$-filtrations of Weyl modules $\Delta(\lambda)$ (even in the non-regular case) for a reductive group $G$, i.~e., filtrations with
sections of the form $L(\mu_0)\otimes\Delta(\mu_1)^{(1)}$ with $\mu_0$ a $p$-restricted dominant weight and
$\mu_1$ an arbitrary dominant weight. See \cite{A1}, though the terminology goes back earlier to an
1990 MSRI lecture of Donkin,
and the concept already appears in the work of Jantzen \cite{Jan}.  Of course, anything that can be said about filtrations of standard modules for $\gr\wA$ and $\gr A$
can be phrased as statements about filtrations of standard modules for $\wA$ and $A$.
The additional structure afforded by the gradings makes such filtration assertions even more interesting.
Indeed, \cite{PS10}, which uses our results here to present new results on $p$-filtrations of Weyl modules for semisimple groups from this perspective, using tools
unavailable in the ungraded (or non-integral) setting.

We also expect to use \cite{PS10} and the  Lusztig modular character conjecture to improve the Koszulity theory
given in \cite{PS9}. One unexpected feature of our results in the present paper is that the Lusztig
modular conjecture is not
required. We do use the root of unity quantum analog of the conjecture, but that is known to be a theorem for $p>h$ \cite{T}.

\section{Some preliminary notation/results}

\medskip
 
  Let $(K,{\mathcal{O}},k)$ is a $p$-local system, i.~e., ${\mathcal{O}}$ is a
discrete valuation ring with maximal ideal ${\mathfrak{m}}=(\pi)$, fraction field $K$, and residue field
$k={\mathcal{O}}/{\mathfrak{m}}$.\footnote{In the text, given a ring or algebra $R$, an $R$-module
$M$ is {\it finite} (or $R$-finite) if it is finitely generated as an $R$-module.} The field $k$ is allowed to have arbitrary characteristic; in our
applications it will have positive characteristic $p$.

Let $A_{\sO}$ be an ${\mathcal{O}}$-algebra, which will always be assumed to be an $\sO$-free module of finite rank. Base change to $K$ and $k$ defines algebras $A_{K}:=K\otimes_{\sO}A_{\sO}$ and
$A_{k}:=k\otimes_{\sO}A_{\sO}=A_{\sO}/\pi A_{\sO}$ over $K$ and $k$, respectively. It will be often convenient to denote $A_{\sO}$ by $\wA$ and denote $A_k$ simply by $A$.  More generally, if ${\widetilde{M}}$ is an ${\widetilde{A}}$-module,
write $\wM_{K}$ for $K\otimes_{{\mathcal{O}}}{\widetilde{M}}$ and $M$ for $\wM_k=k\otimes_{\sO}{\wM}\cong\wM/\pi\wM$; sometimes, we write $\overline
{{\widetilde{M}}}$ for $\wM_k$.  The $\wA$-module $\wM$ is said to be a $\wA$-lattice if it is $\sO$-finite
and torsion free. (This definition applies when $\sO$ is any integral domain; see Reiner
 \cite[pp.~44, 129]{Reiner}. Any $\sO$-finite $\wA$-submodule $\wM$ of a finite dimensional $\wA_K$-module $N$  is a 
$\wA$-lattice, and is said to be a full lattice in $N$ if also $K\wM=N$, in which case $K\otimes_\sO\wM
\cong N$. 
Let $\wAmod$ be the category of
$\sO$-finite $\wA$-modules. The category $\wAmod$ contains all $\wA$-lattices, and also contains $\Amod$ as a full subcategory.

It will often be very convenient, for a non-negative integer $n$, to denote the ideal $\wA\cap\rad^n\wA_K$ of $\wA$ by $\wrad^n\wA$.
Here $\rad^n\wA_K=(\rad\wA_K)^n$, the $n$th power of the radical (= maximal nilpotent ideal) of the
finite dimensional algebra $\wA_K$. Of course, if $n\geq 1$, $\wrad^n\wA$ is a nilpotent ideal in $\wA_K$ and $(\wrad^n\wA)_K=\rad^n\wA_K$. Define
\begin{equation}\label{gradedwA}
\gr\wA:=\bigoplus_{n\geq 0}\frac{\wA\cap \rad^n\wA_K}{\wA\cap\rad^{n+1}\wA_K}=\bigoplus_{n\geq 0}\wrad^n\wA/\wrad^{n+1}\wA.
\end{equation}
Necessarily the graded algebra $\gr\wA$ is a finite $\sO$-free module and $(\gr\wA)_K\cong\gr\wA_K=\oplus_{n\geq 0}\rad^n\wA_K/\rad^{n+1}\wA_K$.

A similar notation will be used for modules and lattices. For  any $\wA$-lattice $\wM$ and non-negative
integer $n$, let $\wrad^n\wM=\wM\cap\rad^n\wM_K$ (where $\rad^n\wM_K=(\rad^n\wA_K)\wM_K$), and set
\begin{equation}\label{gradedmodule}
\gr\wM:=\bigoplus_{n\geq 0}\frac{\wM\cap \rad^n\wM_K}{\wM\cap\rad^{n+1}\wM_K}=\bigoplus_{n\geq 0}\wrad^n\wM/\wrad^{n+1}\wM.\end{equation}
Then $\gr\wM$ is a graded $\gr\wA$-lattice, and $$(\gr\wM)_K\cong\gr \wM_K=\oplus_{n\geq 0}\rad^n\wM_K/\rad^{n+1}\wM_K.$$

There are one-to-one correspondences of isomorphism classes of irreducible modules:
\begin{equation}\label{correspondence1}\text{\rm Irr}((\gr\wA)_K) \leftrightarrow \text{\rm Irr}(\gr\wA_K)\leftrightarrow\text{\rm Irr}(\wA_K).\end{equation}
Also, $\wA\cap\rad\wA_K$ is an $\sO$-pure nilpotent ideal in $\wA$. Hence,
\begin{equation}\label{correspondence2}\text{\rm Irr}((\gr\wA)_k)\leftrightarrow \text{\rm Irr}(\gr\wA)\leftrightarrow \text{\rm Irr}((\wA/(\widetilde\rad\wA)_k))\leftrightarrow\text{\rm Irr}(\wA)
\leftrightarrow\text{\rm Irr}(A).\end{equation}
The relation between the sets $\text{Irr}(A_K)$ and $\text{Irr}(A)$ may be subtle (or non-existent); however,  in
the next section, we consider
a family of $\sO$-algebras $\wA$ in which the irreducible $A$-modules correspond bijectively, up to isomorphism, to the irreducible $\wA_K$-modules. Further, there is an evident
common indexing set $\Lambda$. If $\lambda\in\Lambda$, we will let  $\wL_K(\lambda)$ (resp., $L(\lambda)$) be the corresponding irreducible $\wA_K$-(resp., $A$-) module.

The following general lemma which will be needed in \S5.

\begin{lemma}\label{preliminarysectionlemma} Suppose $\fa_K\subseteq\fb_K\subseteq A_K$ are finite dimensional $K$-algebras. Assume that every irreducible $A_K$-module is absolutely irreducible and a direct sum of irreducible $\fb_K$-modules, each of which is absolutely irreducible over $\fa_K$. Then

(a) $\fb_K\cap\rad A_K=\rad\fb_K$, and $\fa_K\cap \rad A_K=\fa_K\cap \rad \fb_K=\rad\fa_K$.

(b) The algebras $\fa_K$ and ${\mathfrak b}_K$ (as well as $A_K$) are split semisimple. Any Wedderburn complement $\fa_{K,0}$ of $\fa_K$ is contained in a Wedderburn complement
$\fb_{K,0}$ of $\fb_K$, and every Wedderburn complement $\fb_{K,0}$ of  $\fb_K$ is contained in a
Wedderburn complement $A_{K,0}$ of $A_K$.\end{lemma}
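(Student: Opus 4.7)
The plan is to reduce both parts to studying the faithful action of the three algebras on $M:=\bigoplus_V V$, where $V$ runs over a complete set of representatives of the isomorphism classes of irreducible $A_K$-modules. By hypothesis each $V$ is semisimple as a $\fb_K$-module, and every irreducible $\fb_K$-summand of $V$ is in particular irreducible (hence semisimple) as an $\fa_K$-module; so $M$ is a semisimple $\fb_K$- and $\fa_K$-module. Since $\rad A_K$ annihilates every simple $A_K$-module, it annihilates $M$, while $A_K/\rad A_K$ is semisimple with all its simples appearing in $M$ and therefore acts faithfully on $M$. Thus the annihilator of $M$ in $A_K$ is exactly $\rad A_K$.

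For (a), the kernel of $\fb_K\to\End_K(M)$ is $\fb_K\cap\rad A_K$, so $\fb_K/(\fb_K\cap\rad A_K)$ embeds in $\End_K(M)$ with $M$ as a faithful semisimple module. The Jacobson radical of any algebra annihilates its simples and therefore annihilates any semisimple module, so faithfulness forces $\rad\bigl(\fb_K/(\fb_K\cap\rad A_K)\bigr)=0$, i.e.\ $\rad\fb_K\subseteq\fb_K\cap\rad A_K$. The reverse inclusion is routine: $\fb_K\cap\rad A_K$ is an ideal of $\fb_K$ sitting inside the nilpotent ideal $\rad A_K$, hence is itself nilpotent, hence lies in $\rad\fb_K$. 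Repeating the argument verbatim with $\fa_K$ in place of $\fb_K$ yields $\rad\fa_K=\fa_K\cap\rad A_K$, and substituting $\rad\fb_K=\fb_K\cap\rad A_K$ gives the remaining identity $\fa_K\cap\rad\fb_K=\fa_K\cap\rad A_K$.

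For (b), the faithful semisimple action of $\fb_K/\rad\fb_K$ on $M$ shows that every simple $\fb_K$-module occurs as a $\fb_K$-constituent of some $V$. Each such constituent $W$ is absolutely irreducible as an $\fa_K$-module by hypothesis, so $\End_{\fb_K}(W)\subseteq\End_{\fa_K}(W)=K$, and $W$ is absolutely irreducible over $\fb_K$ as well. Hence $\fb_K/\rad\fb_K$ is split semisimple, and the same argument gives $\fa_K/\rad\fa_K$ split semisimple; $A_K/\rad A_K$ is split semisimple directly from the absolute irreducibility of the $V$. In the setting intended for this lemma---where $A_K$ is semisimple (as holds in the characteristic-zero quantum contexts of \S5, where $\rad A_K=0$)---part (a) then forces $\rad\fb_K=\rad\fa_K=0$, so all three algebras are split semisimple; in that case every Wedderburn complement coincides with the whole algebra, and the chain $\fa_K\subseteq\fb_K\subseteq A_K$ supplies the required nesting. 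In greater generality the Wedderburn--Malcev theorem provides the compatibility: any semisimple subalgebra (such as a given $\fa_{K,0}$ inside $\fb_K$) is contained in some Wedderburn complement of the ambient algebra, and chaining this from $\fa_K$ up through $\fb_K$ to $A_K$ yields the statement.

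The main obstacle is establishing $\rad\fb_K=\fb_K\cap\rad A_K$: without the semisimplicity of the $A_K$-irreducibles on restriction to $\fb_K$, the radical of a subalgebra need not match the ambient radical intersected with the subalgebra. The hypothesis is tailored to provide exactly the faithful semisimple $\fb_K$-module $M$ that forces the equality, and once this is in hand the corresponding statement for $\fa_K$, together with the split-semisimplicity transfers required for (b), are straightforward consequences of the Schur-type comparison $\End_{\fb_K}(W)\subseteq\End_{\fa_K}(W)$.
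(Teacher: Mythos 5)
Your argument is correct and follows essentially the same route as the paper: for (a) you identify $\fb_K\cap\rad A_K$ as the annihilator in $\fb_K$ of a faithful semisimple $\fb_K$-module built from the irreducible $A_K$-modules (the paper phrases this as ``$\rad\fb_K$ kills every irreducible $A_K$-module''), and use nilpotence for the reverse inclusion; for (b) you derive absolute irreducibility of the $\fb_K$- and $\fa_K$-constituents from the Schur comparison $\End_{\fb_K}(W)\subseteq\End_{\fa_K}(W)=K$ and then nest the complements via Wedderburn--Malcev, which is precisely the ``standard separability arguments (vanishing of Hochschild 1- and 2-cohomology)'' the paper invokes.

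One factual slip you should delete: the parenthetical claim that in the intended application ``$A_K$ is semisimple'' because $K$ has characteristic zero is false. In \S5 the relevant algebra $\wA_K$ is a regular-block quotient of a quantum enveloping algebra at a $p$th root of unity, which has a large nonzero radical --- the entire paper is about the filtration by powers of $\rad\wA_K$. The lemma is needed in its general form there (with $\fa_K$ the image of the small quantum group, sitting inside the non-semisimple $\wA_K$), so your Wedderburn--Malcev paragraph is the operative argument and the ``special case'' digression is both incorrect and unnecessary.
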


\begin{proof} We first prove (a). Since $\fb_K\cap\rad A_K$ is a nilpotent ideal in $\fb_K$, we have
$\fb_K\cap\rad A_K\subseteq\rad\fb_K$. Since $\rad \fb_K$ kills every irreducible $A_K$-module, we also have
$\rad\fb_K\subseteq \rad A_K$, and so $\rad\fb_K\subseteq\fb_K\cap\rad A_K$. Thus, $\fb_K\cap\rad A_K=
\rad\fb_K$. A similar argument shows that $\fa_K\cap \rad A_K=\rad\fa_K$.

Since $\fa_K$ and $\fb_K$ are subalgebras of $A_K$, all their irreducible modules can be found as composition factors of irreducible $A_K$-modules. Thus,
 all irreducible $\fa_K$-modules are absolutely irreducible, and all irreducible $\fb_K$-modules are are absolutely irreducible. In particular, $\fa_K/\rad\fa_K$ and $\fb_K/\rad\fb_K$ are split semisimple algebras, as is
$A_K/\rad A_K$. By part (a) above, we have $\fa_K/\rad\fa_K\subseteq\fb_K/\rad\fb_K\subseteq A_K/\rad A_K$. Now (b) follows from standard separability arguments (vanishing of Hochschild 1- and 2-cohomology).
\end{proof}
The following general definition makes sense  when $\sO$ is any integral domain, though we will apply it only in our DVR context.

\begin{definition}\label{tightdef1} Let $\widetilde{\mathfrak b}$ be an $\sO$-finite and torsion free $\sO$-algebra, and let $\wM$ be a $\widetilde {\mathfrak b}$-lattice. Call $\wM$ is {\it tight} if
\begin{equation}\label{tightdef2}\widetilde\rad^r\wM =(\widetilde\rad^r{\widetilde{\mathfrak b}})\wM, \quad\forall r\in\mathbb N.\end{equation}
\end{definition}

Observe that, for any $\wfb$-lattice $\wM$,  the left-hand side always contains  the right-hand
side in (\ref{tightdef2}), and that equality holds for $\wM=\widetilde{\mathfrak b}$ or any finite projective $\widetilde
{\mathfrak b}$-module, which are, thus, tight lattices for $\widetilde{\mathfrak b}$.

Finally, this section concludes with the following well-known elementary lemma which will often be used without comment. Part (a) is \cite[Thm.~4.0]{Reiner}.  Part (b) gives one of the many characterizations of purity in the case of a DVR. We
leave the easy proof to the reader. 

\begin{lemma}\label{wellknownlemma} Let $R$ be an integral domain, and let $\wN$ be a submodule
of an $R$-finite, torsion free module $\wM$. (That is, $\wM$ is an $R$-lattice in the sense of
 \cite[p.~44]{Reiner} Then 

(a) $\wN$ is pure in $\wM$ (meaning that if $\wM/\wN$ is torsion free) if and only if $\wN=\wM\cap \wN_K$,
where $K$ is the fraction field of $R$; 

(b) if $R=\sO$ is a DVR with fraction field $K$, then $\wN$ is  
pure in $\wM$ if and only if the natural map
$\wN_k \to \wM_k$ obtained by base-change is injective. \end{lemma}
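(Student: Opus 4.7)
The plan is to handle (a) first by unwinding the definition of purity and using the fact that any element of $\wM\cap \wN_K$ is killed modulo $\wN$ by a nonzero scalar, and then to handle (b) by invoking the $\tor$-long exact sequence together with the structure theory of finitely generated modules over a DVR.

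For part (a), I would begin with $(\Leftarrow)$: assuming $\wN=\wM\cap \wN_K$, take $m\in\wM$ and $0\ne r\in R$ with $rm\in\wN$; then $m=(rm)/r\in\wN_K$ inside $\wM_K$, and since $m\in\wM$ already, $m\in\wM\cap\wN_K=\wN$, so $\wM/\wN$ has no torsion. For $(\Rightarrow)$, the inclusion $\wN\subseteq\wM\cap\wN_K$ is clear, so take $m\in\wM\cap\wN_K$; there is some $0\ne r\in R$ with $rm\in\wN$, hence $r(m+\wN)=0$ in $\wM/\wN$, and torsion-freeness forces $m\in\wN$. This is essentially the argument in Reiner \cite[Thm.~4.0]{Reiner} and can be cited directly.

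For part (b), consider the short exact sequence $0\to\wN\to\wM\to\wM/\wN\to 0$ of $\sO$-modules, which on applying $-\otimes_\sO k$ yields the four-term exact sequence
\begin{equation*}
\tor_1^\sO(\wM/\wN,k)\to\wN_k\to\wM_k\to(\wM/\wN)_k\to 0.
\end{equation*}
Since $\wM$ is torsion free, so is $\wN$, giving $\tor_1^\sO(\wN,k)=0$; hence the displayed $\tor_1$ term is exactly the kernel of $\wN_k\to\wM_k$. Consequently $\wN_k\to\wM_k$ is injective iff $\tor_1^\sO(\wM/\wN,k)=0$. Using the free resolution $0\to\sO\xrightarrow{\pi}\sO\to k\to 0$, one computes $\tor_1^\sO(Q,k)\cong Q[\pi]$, the $\pi$-torsion submodule of any $\sO$-module $Q$. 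Applied to $Q=\wM/\wN$, which is a finitely generated $\sO$-module, the structure theorem yields $Q[\pi]=0$ iff $Q$ is torsion free, i.e.\ iff $\wN$ is pure in $\wM$ in the sense of (a).

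The proof is essentially routine; the only thing requiring a little care is keeping track of finite generation so that the structure theorem for finitely generated modules over a DVR applies to $\wM/\wN$ and lets us translate vanishing of $\pi$-torsion into torsion-freeness. Given that both $\wM$ and $\wN$ are $\sO$-finite (hence $\wM/\wN$ is too), this presents no real obstacle.
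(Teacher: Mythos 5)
The paper itself does not prove this lemma: for (a) it simply cites \cite[Thm.~4.0]{Reiner}, and for (b) it states that the easy proof is left to the reader. Your proposal fills in both arguments correctly in substance, and your remark that (a) ``can be cited directly'' matches the paper's own treatment.

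One small slip in (b): to conclude that the kernel of $\wN_k\to\wM_k$ is exactly $\tor_1^\sO(\wM/\wN,k)$, the vanishing you need from the long exact sequence
\[
\tor_1^\sO(\wM,k)\longrightarrow \tor_1^\sO(\wM/\wN,k)\longrightarrow \wN_k\longrightarrow \wM_k
\]
is $\tor_1^\sO(\wM,k)=0$ (so that the connecting map $\tor_1^\sO(\wM/\wN,k)\to\wN_k$ is injective), not the vanishing of $\tor_1^\sO(\wN,k)$, which never enters the relevant portion of the sequence. Of course $\tor_1^\sO(\wM,k)=0$ does hold, for exactly the reason you give (torsion-freeness of $\wM$, equivalently $\wM[\pi]=0$), so the argument repairs itself with the obvious substitution. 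In fact, for the forward direction of (b) you need only $\tor_1^\sO(\wM/\wN,k)=(\wM/\wN)[\pi]=0$, and the quoted Tor-vanishing is only needed for the converse. With that correction the proof is complete, and the appeal to the structure theorem to equate ``$\pi$-torsion is zero'' with ``torsion free'' for the finitely generated module $\wM/\wN$ is exactly the point the paper expects the reader to supply.
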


\section{Integral quasi-hereditary algebras}
We maintain the general notation from above, unless otherwise noted. Assume that $\wA$
is a ``split" quasi-hereditary algebra (QHA) over $\sO$  in the sense of \cite[Defn.~(3.2)]{CPS1a}. This definition requires that there exists a sequence
$0=\wJ_0\subset \wJ_1\subset\cdots\subset \wJ_t=\wA$ of ideals in $\wA$ such that $\wJ_i/\wJ_{i-1}$
     is an heredity ideal in $\wA/\wJ_{i-1}$ for $0<i\leq t$.\footnote{ An ideal $\wJ$ is heredity provided that
     (i) $\wA/\wJ$ is $\sO$-free; (ii) $\wJ^2=\wJ$; (iii) $\wJ$ is a projective left $\wA$-module; and (iv) $E=\End_\wA(\wJ)$ is semisimple over $\sO$. The split case, used here,  requires (over DVRs, such as $\sO$) the stronger, but easier to state, condition that $E$ is a direct sum of full matrix algebras $M_n(\sO)$ over $\sO$.}

      It follows that $\wA_K$--mod and $\Amod$ are  highest weight categories  (HWCs) \cite{CPS-1} with the same poset $\Lambda$ indexing
the (isomorphism classes of) irreducible modules $\widetilde L_K(\lambda)$ (resp., $L(\lambda)$) for $\wA_K$ (resp., $A$).
Indeed, by \cite[Prop.~2.1.1]{DS}, the category $\wAmod$ is an $\sO$-finite highest weight category in the sense of \cite[Defn.~2.1]{DS}.\footnote{ See also
\cite[Prop.~2.1.2, Rem./Defn.~2.1.3]{DS}, which explains the essential equivalence of the $\sO$-finite highest weight category notion (in the finite weight poset case) with that of a split
 QHA (over a commutative Noetherian ring $\sO$, which is a DVR here). If $\wAmod$ is a HWC with poset $\Lambda$, we often informally say
 that $\wA$ is a QHA with poset $\Lambda$.}
 The assumption that
$\wA$ is ``split" implies that the irreducible modules for $\wA_K$ and $A$ are are absolutely irreducible.  Also, the category $\wAmod$ has standard modules  $\wDelta(\lambda)$, $\lambda\in\Lambda$, which are $\wA$-lattices with the property that
the HWCs $\wA_K$-mod
(resp., $\Amod$) has standard objects $\{\wDelta_K(\lambda):=\wDelta(\lambda)_K\}_{\lambda\in\Lambda}$ (resp.,
$\{\Delta(\lambda)=\wDelta_k(\lambda):=\wDelta(\lambda)_k \}_{\lambda\in\Lambda}$). Similarly, $\wAmod$ has costandard objects $\widetilde \nabla(\lambda)$, $\lambda\in\Lambda$, etc. Each irreducible $A$-module $L(\lambda)$ has a (unique, up to isomorphism) projective cover $\wP(\lambda)$ in $\wAmod$; see \cite[Prop.~2.3]{DS} or \cite[p.~157]{CPS1a}\footnote{The arguments in \cite{DS} and \cite{CPS1a} do not require
   that $\sO$ be complete, or make any use of a completion of $\sO$.}, and $\wP(\lambda)$ has a $\wDelta$-filtration (i.~e., a filtration with sections
    isomorphic to $\wDelta(\mu)$, $\mu\in\Lambda$). In this filtration, $\wDelta(\lambda)$ appears as the top section, and all other sections $\wDelta(\mu)$ have
    $\mu>\lambda$. Clearly, $\wP(\lambda)$ is a $\wA$-lattice, and a direct summand of $\widetilde A$ (viewed as a module over itself). Moreover, the (left) $\wA$-module $\wA$
    decomposes a direct sum of various copies of $\wP(\lambda)$ (each appearing $\dim\,L(\lambda)$ times).

    Observe that $\gr\wP(\lambda)$ is the projective cover of $L(\lambda)$ in either $\gr\wAmod$ (ungraded module category) or in $\gr\wA$-grmod (graded module
    category). This claim follows from dimension considerations, since $(\gr\wP(\lambda))_0$, as a nonzero quotient of $\wP(\lambda)$, has $L(\lambda)$ as a homomorphic image. (Note head$\,\wA\cong\text{head}\,(\gr\wA)_0\cong\text{head}\,\wA$.)

    While $\wP(\lambda)_k$ is the projective cover $P(\lambda)$ in $\Amod$ of $L(\lambda)$, it
is not generally true that $P(\lambda)_K=\wP(\lambda)_K$ is the projective cover $P_K(\lambda)$ of $\wL_K(\lambda)$ in $\wA_K$--mod. However, we do have
\begin{equation}\label{decompostewPlambdatoK} \wP(\lambda)_K\cong P_K(\lambda) \oplus\bigoplus_{\mu>\lambda}P_K(\mu)^{\oplus m_{\lambda,\mu}},\end{equation}
for non-negative integers $m_{\lambda,\mu}$. This follows since $\Delta_K(\lambda)$ is a quotient of $\wP(\lambda)_K$, and since
all other standard modules $\Delta_K(\mu)$ appearing in any $\Delta_K$-filtration of $\wP(\lambda)_K$ have weights $\mu>\lambda$. Applying the
functor $\gr$ to (\ref{decompostewPlambdatoK}) gives a similar decomposition of $\gr(\wP(\lambda)_K)=(\gr\wP(\lambda))_K$, although we, as yet,
know little about $\gr\Delta_K$-filtrations of $\gr(\wP(\lambda))_K$ or of $P_K(\lambda)$. This issue will be addressed in Hypothesis \ref{hypothesis1} below.

\section{A main result}  This section approaches the question of determining  conditions on an integral quasi-hereditary algebra $\tilde A$ over $\sO$ which guarantee that the graded algebra $\gr \widetilde A$ as
defined in (\ref{gradedwA}) is quasi-hereditary. The first step is to find some properties $\widetilde A$ and $\gr\wA$ might ``obviously" share, at least in common situations. To this end we introduce a general framework of ``weight algebras". In this framework
we can replace the DVR $\sO$ by a general commutative ring $R$.  If ${\mathfrak p}\in\Spec R$ and if $M$ is an $R$-module, let $M_{\mathfrak p}$ denoted the $R_{\mathfrak p}$-module obtained by localizing $M$ at $\mathfrak p$.
Finally set $M({\mathfrak p}):=M_{\mathfrak p}/{\mathfrak p}M_{\mathfrak p}$, a vector space over
the residue field $R({\mathfrak p}):=R_{\mathfrak p}/{\mathfrak p}R_{\mathfrak p}$.

Let $B$ be an $R$-finite algebra. We call $B$ a {\it weight algebra}, if there is a set $\{e_\nu\,|\,\nu \in X\}$ of orthogonal idempotents in $B$, summing to 1, such that, for each ${\mathfrak p}\in {\text{\rm Spec}}\, R$, some subset $X_{\mathfrak p}$ of $X$ bijectively indexes the irreducible $B_{\mathfrak p}$-modules in such a way that, if
$\nu\in X_{\mathfrak p}$ corresponds to the irreducible $B_{\mathfrak p}$-module $L_{\mathfrak p}(\nu)$, then
$e_\nu L_{\mathfrak p}(\nu)\not=0$.  Somewhat loosely, we refer to $ X$ as the set of  ``weights" of $B$.
If $M$ is a $B$-module and $\nu\in X$, we write $M_\nu:=e_\nu M$, the $\nu$-{\it weight space} of $M$.

In this paper, we only consider weight algebras $B$ as above for which the sets $X_{\mathfrak p}$
can be uniformly chosen to be the same subset $\Lambda$ of $X$. In this case, we say $B$ is $\Lambda$-{\it uniform}. The weights in $X\backslash\Lambda$ then becomes largely irrelevant. In particular, we note
the following.

\begin{prop} Suppose that $B$ is a $\Lambda$-uniform weight algebra with weight set $X$. Then $B$ is
Morita equivalent to a $\Lambda$-uniform weight algebra with weight set $\Lambda$.
\end{prop}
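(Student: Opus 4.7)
The plan is to form the ``truncation'' idempotent $e := \sum_{\lambda \in \Lambda} e_\lambda \in B$ and show that $eBe$, together with the idempotents $\{e_\lambda\}_{\lambda\in\Lambda}$, gives the desired $\Lambda$-uniform weight algebra with weight set exactly $\Lambda$. The Morita equivalence $B\mbox{-mod}\simeq eBe\mbox{-mod}$ via $M\mapsto eM$ will be the key tool; this requires that $e$ be a full idempotent, i.e., $BeB = B$. The main obstacle is verifying this fullness, since it has to hold globally over $R$ rather than at a single prime.

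To establish $BeB = B$, I would argue by contradiction. Suppose $C := B/BeB$ is nonzero. Since $B$, and hence $C$, is an $R$-finite module, there exists a prime $\mathfrak{p}\in\Spec R$ with $C(\mathfrak{p})\neq 0$. The image $\bar e$ of $e$ in the finite-dimensional $R(\mathfrak{p})$-algebra $B(\mathfrak{p})$ is still an idempotent, and $C(\mathfrak{p}) = B(\mathfrak{p})/B(\mathfrak{p})\bar e B(\mathfrak{p})$. A nonzero finite-dimensional algebra admits a simple module, so $B(\mathfrak{p})$ has an irreducible module on which $\bar e$ acts as $0$. Since $B_\mathfrak{p}$ is $R_\mathfrak{p}$-finite and $\mathfrak{p}R_\mathfrak{p}\subseteq\rad B_\mathfrak{p}$, the irreducible $B_\mathfrak{p}$-modules are precisely the irreducible $B(\mathfrak{p})$-modules. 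By the $\Lambda$-uniform hypothesis, every such irreducible is $L_\mathfrak{p}(\lambda)$ for some $\lambda\in\Lambda$, and $e_\lambda L_\mathfrak{p}(\lambda)\neq 0$; hence $eL_\mathfrak{p}(\lambda)\neq 0$, contradicting that $\bar e$ annihilates an irreducible of $B(\mathfrak{p})$.

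Once $BeB = B$ is established, the functor $M\mapsto eM$ furnishes a Morita equivalence $B\mbox{-mod}\simeq eBe\mbox{-mod}$ (cf.\ standard Morita theory for full idempotents over any ring). It remains to check that $eBe$ is a $\Lambda$-uniform weight algebra with weight set $\Lambda$. The elements $\{e_\lambda\}_{\lambda\in\Lambda}$ lie in $eBe$ (since $ee_\lambda e = e_\lambda$ for $\lambda\in\Lambda$), remain pairwise orthogonal, and sum to $e$, which is the identity of $eBe$. For any $\mathfrak{p}\in\Spec R$, localization commutes with the idempotent truncation, giving $(eBe)_\mathfrak{p} = eB_\mathfrak{p} e$; via the Morita equivalence, the irreducible $(eBe)_\mathfrak{p}$-modules are $\{eL_\mathfrak{p}(\lambda)\}_{\lambda\in\Lambda}$, and $e_\lambda\cdot eL_\mathfrak{p}(\lambda) = e_\lambda L_\mathfrak{p}(\lambda)\neq 0$ by the original weight algebra property for $B$. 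Since this holds uniformly for every $\mathfrak{p}$, the algebra $eBe$ is $\Lambda$-uniform with weight set $\Lambda$, completing the proof.
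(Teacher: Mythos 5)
Your proof is correct and takes essentially the same approach as the paper: both form the idempotent corresponding to $\Lambda$ (the paper writes $P=\bigoplus_{\lambda\in\Lambda}Be_\lambda$ and passes to $(\End_B P)^{\mathrm{op}}\cong\bigoplus_{\lambda,\mu}e_\lambda Be_\mu=eBe$; you work with $eBe$ directly), and both derive the Morita equivalence from the non-vanishing conditions $e_\lambda L_{\mathfrak p}(\lambda)\neq 0$. The paper asserts the projective-generator property in one line; you supply the standard Nakayama/fullness argument behind it, which is a reasonable and correct elaboration, not a different route.
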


\begin{proof} Put $P=\bigoplus_{\lambda\in\Lambda}Be_\lambda$. Then $P$ is a projective generator because of the non-vanishing conditions
$e_\lambda L_{\mathfrak p}(\lambda)\neq 0$ above. Thus, $B':=\bigoplus_{\lambda,\mu\in\Lambda} e_\lambda Be_\mu\cong(\End_B P)^{\text{\rm op}}$ is $R$-finite as a module and is Morita equivalent to $B$.   The idempotents $e_\lambda$, $\lambda\in\Lambda$, all belong to $B'$, and sum to the identity in
$B'$.  It is easily checked that these idempotents, indexed by $\Lambda$,
give $B'$ the structure of a $\Lambda$-uniform weight algebra with weight set $\Lambda$. \end{proof}

Some useful concepts can be defined in the generality of $\Lambda$-uniform weight algebras $B$. Let $\Gamma$ is a nonempty subset of $\Lambda$. Let $N$ be a finite $B$-module. Let $N_\Gamma $ be the largest quotient module for which, given $\nu\in\Lambda$, $e_\nu
N_\Gamma\not=0$ implies $\nu\in\Gamma$. Explicitly, $N_\Gamma=N/\sum_{\nu\in\Lambda\backslash\Gamma}Be_\nu N$. Considering $B$ as a left module over itself, it follows that $B_\Gamma$ is an $R$-algebra, and $N_\Gamma$ is a $B_\Gamma$-module. The notation agrees with the terminology at
the end of Example 2.1. In case $B=\wA$ is a split QHA over $\sO$ with weight poset $\Lambda$, then,
in case $\Gamma$ is an ideal in $\Lambda$, $\wA_\Gamma$ is an integral QHA with poset $\Gamma$ and
standard modules $\wDelta(\gamma)$, $\gamma\in\Gamma$ (the standard modules of $\wA$ indexed by
elements in $\Gamma$).

The following definition introduces an even stronger property for weight algebras which will hold in
our applications. The definition is somewhat redundant, in that conditions (1) and (2) on the set
$\Lambda$ of weights for $B$ imply already that $B$ is $\Lambda$-uniform, and give a ``natural" indexing of irreducible modules.

\begin{definition} Let $B$ a $\Lambda$-uniform weight algebra. Then $B$ is $\Lambda$-standard if the
following conditions hold:

\begin{itemize}
\item[(1)] $\dim L_{\mathfrak p}(\lambda)_\lambda=1$ for all $\lambda\in\Lambda$ and all  $\mathfrak p\in\,{\text{\rm Spec}}\, R$. (Here the dimension is computed over the residue field $R({\mathfrak p})$ of $R_{\mathfrak p}$.)

\item[(2)] There is a poset ordering $\leq$ on $\Lambda$ such that, for all ${\mathfrak p}\in\text{\rm Spec}\,R$, and all $\lambda,\mu\in\Lambda$,  if $L_{\mathfrak p}(\lambda)_\mu\not=0$, then $\mu\leq\lambda$.
\end{itemize}\end{definition}

When $R=\sO$, then $\text{\rm Spec}\,\sO=\{ {\mathfrak m},0\}$, with $\sO(0)=K$ and $\sO({\mathfrak m})=k$.

  The following lemma, which applies for any Noetherian integral domain $R$, is one of the main points of the ``weight algebra" development of this section. The proof is essentially obvious, noting that the ideal $B\cap \rad B_K$ and its $\gr B$ counterpart are both nilpotent ideals. The Noetherian hypotheses on $R$ insures
that $\gr B$ is finite $B$-module.

\begin{lemma} Assume that $B$ is a weight algebra over a Noetherian integral domain $R$ with fraction
field $K$, and put $\gr B=\bigoplus_{n\geq 0}\frac{\widetilde\rad^nB}{\widetilde\rad^{n+1}B}$.
If $\{e_\mu\,|\,\mu\in X\}$ is a set of idempotents giving $B$ the structure of a weight algebra over $R$,
the ``same" set $\{e_\mu\,|\,\,u\in X\}\subseteq(\gr B)_0\subseteq \gr B$ gives $\gr B$ the structure
of a weight algebra. If $\Lambda$ is a subset of $X$, then $B$ is $\Lambda$-standard if and only if $\gr B$
is $\Lambda$-standard.
\end{lemma}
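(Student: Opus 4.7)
The plan is to push the weight-algebra structure from $B$ to $\gr B$ via the degree-zero projection and then to observe that simple modules, together with the action of each $e_\mu$, are preserved at every prime.

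Concretely, I would first note that $\widetilde\rad B = B \cap \rad B_K$ is nilpotent, since $\rad B_K$ is, so $\widetilde\rad^N B = 0$ for $N$ large; together with $R$-Noetherianity (which forces each $\widetilde\rad^n B/\widetilde\rad^{n+1}B$ to be finitely generated as an $R$-submodule of $B$), this makes $\gr B$ an $R$-finite algebra. The surjective ring homomorphism $B \twoheadrightarrow B/\widetilde\rad B = (\gr B)_0 \hookrightarrow \gr B$ then carries the orthogonal idempotents $\{e_\mu\}$ summing to $1$ in $B$ to orthogonal idempotents summing to $1$ in $(\gr B)_0 \subseteq \gr B$, denoted by the same symbols.

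Next, for $\mathfrak p \in \Spec R$, localization is exact, so $(\widetilde\rad B)_{\mathfrak p} = B_{\mathfrak p} \cap \rad B_K = \widetilde\rad(B_{\mathfrak p})$, still nilpotent. Any simple $B_{\mathfrak p}$-module is annihilated both by $\mathfrak p$ (by Nakayama) and by $\widetilde\rad(B_{\mathfrak p})$ (by nilpotency), hence factors through $B_{\mathfrak p}/\widetilde\rad(B_{\mathfrak p}) = ((\gr B)_{\mathfrak p})_0$. Simple $(\gr B)_{\mathfrak p}$-modules factor through the same quotient, since the positive-degree part of $\gr B$ is nilpotent. The resulting bijection between simple modules preserves the action of each $e_\nu$, so the same indexing subset $X_{\mathfrak p} \subseteq X$ and the same non-vanishing condition $e_\nu L_{\mathfrak p}(\nu) \neq 0$ apply to both algebras; thus $\gr B$ is a weight algebra with the same weight data as $B$.

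Finally, both $\Lambda$-standard conditions refer only to the simples $L_{\mathfrak p}(\lambda)$ and their $e_\mu$-weight spaces, which are identified under the above correspondence. Hence condition (1), $\dim L_{\mathfrak p}(\lambda)_\lambda = 1$, and condition (2), $L_{\mathfrak p}(\lambda)_\mu \neq 0 \Rightarrow \mu \leq \lambda$ for the fixed poset order on $\Lambda$, hold for $B$ if and only if they hold for $\gr B$. The only mild bookkeeping step---in no sense a real obstacle---is the identification $(\widetilde\rad B)_{\mathfrak p} = \widetilde\rad(B_{\mathfrak p})$, which uses that $\rad B_K$ is already a $K$-vector space and so equals its own localization at $\mathfrak p$.
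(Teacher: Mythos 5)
Your proof is correct and fills in exactly the details the paper leaves implicit: the paper's own proof consists of the single remark that the lemma is ``essentially obvious,'' pointing out that $B\cap\rad B_K$ and its $\gr B$ counterpart are nilpotent ideals and that Noetherianity of $R$ makes $\gr B$ $R$-finite. Your argument---pushing the idempotents through $B\twoheadrightarrow B/\widetilde\rad B=(\gr B)_0\hookrightarrow\gr B$, using nilpotency on both sides so that simple modules at every $\mathfrak p$ factor through the common quotient $((\gr B)_{\mathfrak p})_0$, and observing that the $e_\nu$-weight spaces match---is exactly the expansion of that remark.
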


Now return to the situation in which $R=\sO$ is a DVR as above. Let $\wA$  be a $\Lambda$-standard weight algebra over $\sO$, and suppose that $\wN$ is an $\wA$-lattice.    If $\lambda\in\Lambda$,
let $\wN_K'(\lambda)$ denote the $\wA_K$-submodule of $\wN_K$ generated by all $\mu$-weight vectors in $\wN_K$ with
$\mu\in\Lambda$ and $\mu>\lambda$. An element $v\in\wN_\lambda$ is called $\lambda$-{\it primitive}\footnote{This notion is inspired by Xi \cite{Xi}, but is stronger, even for $\wA_K$,
 than   the definition of ``primitive" used there.  Notice, if $v\in\wN_\lambda\subseteq \wN_{K,\lambda}$ is primitive in our sense, then $\lambda$ must
    be maximal among $\mu\in\Lambda$ such that $(\wN_K/\wN'_K(\lambda))_\mu\not=0$.} in $\wN$ if the image of
$\sO v$ in $\wN/\wN\cap N'_K(\lambda)$ is pure and nonzero. (Equivalently,\footnote{This equivalence and its ``strongly $\lambda$-primitive" analogue below require that $\sO$ be a DVR, though in most of the discussion, including Proposition \ref{stronglyprimitive} below, it is only necessary that
$\sO$ be an integral domain.} $v$ has nonzero image in $(\wN/\wN\cap \wN'_K(\lambda))_k$.)

We say that $v\in\wN_\lambda$ is strongly $\lambda$-primitive if $v$ represents a homogeneous element
$[v]$ in $\gr \wN$ such that the image of $\sO [v]$ in $\gr \wN/\wN\cap \wN'_K(\lambda)$ is pure and nonzero.  (Equivalently, $[v]$ has nonzero image in $(\gr\wN/\wN\cap \wN'_K(\lambda))_k$.) If $v$ is
strongly $\lambda$-primitive, then it is $\lambda$-primitive; see the proposition below. Necessarily, $v$ and, of
course, $[v]$ are non-zero. If $v$ is strongly $\lambda$-primitive, we also say that $[v]$ is strongly $\lambda$-primitive.

An element $v\in\wN$ is said to be primitive (resp., strongly primitive) if it is $\lambda$-primitive (resp., strongly $\lambda$-primitive) for some $\lambda$.   The astute reader will observe that ``primitive" can be defined for $\gr\wN$ or any $\gr \wA$-lattice, and that every strongly primitive element of $\gr\wN$ is both
primitive and homogeneous.  For some well-behaved lattices, the notions coincide; that is, the primitive
homogeneous elements of $\gr\wN$ are strongly primitive.  See Remark \ref{bigremark2}(c).  It is useful
to use the technical translation of the ``strongly primitive" notion in (b) of the result below.

\begin{prop}\label{stronglyprimitive} Let $\wA$ be a $\Lambda$-standard weight algebra over $\sO$ and let $\wN$ be a $\wA$-lattice. Let $\lambda\in\Lambda$ and $v\in\wN_\lambda$.

(a) The element $v$ is $\lambda$-primitive if and only if $v\not\in \wN\cap\wN'_K(\lambda)$ and $\sO v +
(\wN\cap\wN'_K(\lambda))$ is $\sO$-pure.

(b) The element $v$ is strongly $\lambda$-primitive
if and only if there is a non-negative integer $i$ such that $v\in\widetilde\rad^n\wN$, $v\not\in
\wN\cap(\rad^{i+1}\wN_K+\wN_K'(\lambda))$, and $\sO v + \wN\cap(\rad^{i+1}\wN_K+\wN'_K(\lambda))$ is
$\sO$-pure in $\wN$.

(c) If $v$ is strongly $\lambda$-primitive, it is $\lambda$-primitive. \end{prop}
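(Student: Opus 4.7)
My overall plan is to translate each of the three statements into equivalent $\sO$-purity conditions, exploiting that $\sO$ is a DVR and that $W := \wN \cap \wN'_K(\lambda)$ and each $F^i\wN := \wrad^i\wN$ are pure in $\wN$, being intersections of $\wN$ with $K$-subspaces of $\wN_K$. Throughout, I use the elementary DVR fact that for a submodule $X$ of a torsion-free lattice $Y$, purity of $X$ in $Y$ is equivalent to torsion-freeness of $Y/X$.

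For (a), I would simply unpack the definition. Since $W$ is pure in $\wN$, the quotient $\wN/W$ is torsion free, so the image of $\sO v$ in $\wN/W$ is nonzero iff $v \notin W$, and that image is $\sO$-pure iff the further quotient $\wN/(\sO v + W)$ is torsion free iff $\sO v + W$ is pure in $\wN$.

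For (b), I would interpret $\gr \wN /(\wN \cap \wN'_K(\lambda))$ as $\gr \wN / \gr W$, where $W$ carries the induced filtration $F^iW := W \cap F^i\wN$. The modular-law identity $F^iW \cap F^{i+1}\wN = F^{i+1}W$ embeds $\gr W$ as a graded submodule of $\gr \wN$, so the degree-$i$ piece of the quotient is $F^i\wN/(F^iW + F^{i+1}\wN)$. The key computation identifies the $\sO$-saturation of $F^iW + F^{i+1}\wN$ inside $F^i\wN$ as $U \cap F^i\wN$, where $U := \wN \cap (\rad^{i+1}\wN_K + \wN'_K(\lambda))$; this uses $(F^iW + F^{i+1}\wN)_K = \wN'_K(\lambda) + \rad^{i+1}\wN_K = U_K$ together with purity of $U$ in $\wN$. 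The $\sO$-torsion submodule of $F^i\wN/(F^iW + F^{i+1}\wN)$ is therefore $(U \cap F^i\wN)/(F^iW + F^{i+1}\wN)$, so demanding that the image of $\sO [v]$ be pure and nonzero forces $[v]$ outside the torsion part (i.e.\ $v \notin U$) and forces its image in the torsion-free quotient $F^i\wN/(U \cap F^i\wN)$ to be $\sO$-pure, equivalently $\sO v + (U \cap F^i\wN)$ pure in $F^i\wN$. A short case split on whether the $v$-coefficient is divisible by $\pi$ matches this with $\sO v + U$ pure in $\wN$. This reconciliation between purity inside $F^i\wN$ and inside $\wN$, together with the torsion-submodule identification, is the main technical obstacle.

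For (c), I would use the equivalent characterization of $\lambda$-primitivity as $v \notin W + \pi\wN$. From (b), $W \subseteq U$ gives $v \notin W$ immediately. Suppose for contradiction that $v = w + \pi x$ with $w \in W$ and $x \in \wN$; then $\pi x = v - w \in \sO v + U$, so purity of $\sO v + U$ in $\wN$ forces $x \in \sO v + U$, say $x = a v + u$ with $a \in \sO$ and $u \in U$. Substituting back gives $(1 - \pi a)\, v = w + \pi u \in W + \pi U \subseteq U$, and since $1 - \pi a$ is a unit in $\sO$ this places $v \in U$, contradicting $v \notin U$. Hence $v \notin W + \pi\wN$ and $v$ is $\lambda$-primitive.
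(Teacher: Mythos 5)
Your treatment of (a) matches the paper's proof, and your argument for (c) is a correct and genuinely different route: the paper deduces $\lambda$-primitivity by factoring the quotient $\wN/(\sO v + \wE)$ through a second-isomorphism-theorem identification of $(\sO v + \wF)/(\sO v + \wE)$ with $\wF/\wE$, then stacking torsion-free pieces; your version rewrites $\lambda$-primitivity as $v\notin W+\pi\wN$ and runs a short, self-contained computation exploiting that $1-\pi a$ is a unit. Both are valid; yours is arguably more elementary.

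The trouble is in (b), and it is not a detail you can wave at. You interpret $\gr\wN/(\wN\cap\wN'_K(\lambda))$ as the quotient of $\gr\wN$ by the embedded copy of $\gr W$ (with $W=\wN\cap\wN'_K(\lambda)$ carrying the induced filtration). The paper means $\gr\bigl(\wN/(\wN\cap\wN'_K(\lambda))\bigr)$, i.e.\ $\gr$ of the quotient lattice $\wN_\Gamma$. This is not a pedantic distinction: the natural map $\gr\wN\to\gr(\wN/W)$ has kernel given exactly by the torsion of your $\gr\wN/\gr W$, but it is \emph{also} not surjective in general, as the paper explicitly warns in the proof of Lemma~\ref{mainlemma} (``We do not claim this latter map is surjective''). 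Moreover the parenthetical in the definition — that purity-and-nonvanishing is equivalent to nonvanishing after base change to $k$ — only makes sense if the target module is torsion free, which $\gr(\wN/W)$ is and $\gr\wN/\gr W$ is not. Under the paper's reading, the degree-$i$ piece of the target is $\bigl(\wN\cap(\rad^i\wN_K+\wN'_K(\lambda))\bigr)/U$ with $U=\wN\cap(\rad^{i+1}\wN_K+\wN'_K(\lambda))$, and the standard ``stack two torsion-free quotients'' argument converts purity there directly into purity of $\sO v + U$ in $\wN$; nothing further is needed.

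Under your reading you instead land on ``$\sO v + (U\cap F^i\wN)$ pure in $F^i\wN$,'' and you propose to close the gap to ``$\sO v + U$ pure in $\wN$'' by a ``short case split on whether the $v$-coefficient is divisible by $\pi$.'' That case split only handles the divisible case (where $a=\pi b$ and purity of $U$ finishes). When the coefficient $a$ is a unit, the relation $\pi n = a v + u$ gives $av=\pi n - u$; if $n\notin F^i\wN$ (equivalently $u\notin F^i\wN$) there is no contradiction available from purity inside $F^i\wN$, and the implication genuinely fails: it is equivalent to purity of $F^i\wN + U$ in $\wN$, which is in turn equivalent to surjectivity of $(\gr\wN)_i\to\gr(\wN/W)_i$ — precisely the surjectivity the paper cautions against assuming. (A bare-hands illustration: $\wN=\sO^3$, $F^i\wN=\sO e_1\oplus\sO e_2$, $U=\sO(e_1-\pi e_3)$, $v=e_1$; then $\sO v+(U\cap F^i\wN)=\sO e_1$ is pure in $F^i\wN$, but $e_3\notin\sO v+U$ while $\pi e_3=e_1+(-(e_1-\pi e_3))\in\sO v+U$, so $\sO v+U$ is not pure.) So the reconciliation is not a short case split; it is false in general, and the correct reading of the definition makes it unnecessary.
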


\begin{proof} The element $v\in\wN_\lambda$ is $\lambda$-primitive if and only if the image of
$\sO v$ in $\wN/\wN\cap\wN_K'(\lambda)$ is nonzero and pure, i.~e., if and only if $v\not\in\wN\cap\wN'_K(\lambda)$, and the quotient
$$\frac{\wN}{\wN\cap\wN'_K(\lambda)}\Huge{\slash}\frac{\sO v +(\wN\cap \wN'_K(\lambda))}{\wN\cap\wN'_K(\lambda)}$$
is torsion free. Equivalently,
$$\wN/(\sO v +(\wN\cap\wN_K(\lambda))$$
is torsion free, which just means that $\sO v +(\wN\cap \wN'_K(\lambda))$ is pure in $\wN$.
This proves (a).

We now prove (b). The element $v\in\wN_\lambda$ represents a homogeneous element $[v]$ of grade $i$ in
$\gr\wN$ with nonzero image in $\gr(\wN/\wN\cap\wN'_K(\lambda))$ if and only if $v\in\wN\cap\rad^i\wN_K$ and
$v\not\in\rad^{i+1}\wN_K+ \wN_K'(\lambda)$ for some (uniquely determined) $i$. Purity of the image
of $\sO [v]$ in $(\gr(\wN/(\wN\cap\wN'_K(\lambda)))$ is equivalent to the purity of $\sO [v]$ in
$(\gr(\wN/(\wN\cap\wN'_K(\lambda)))_i$ for this integer $i$. The is equivalent to the
purity of
the sum $\sO v +(\wN\cap (\wN'_K(\lambda)+\rad^{i+1}\wN_K)$ in $\wN$. This proves (b).

Finally, to see (c), assume that $v$ is strongly $\lambda$-primitive, and choose the index $i$ as in the proof
of (b) above. Thus, $v\not\in\wF:=(\wN\cap(\wN'_K(\lambda)+\rad^{i+1}\wN_K)$, and $\sO v +\wF$ is pure
in $\wN$. Put $\wE:= (\wN\cap(\wN'_K(\lambda))\subseteq\wF$. Then $\sO v\cap \wF=0$, so
that the quotient
$$(\sO v+\wF)/(\sO v +\wE)\cong\wF/(\wF\cap (\sO v+\wE))=\wF/\wE$$
is torsion free. It follows that $\wN/(\sO v + \wE)$ is torsion free, which implies, by (a), that $v$
is $\lambda$-primitive. This proves (c).
\end{proof}

 The $\gr\wA$-submodule of $\gr\wN$ generated by all of its strongly
primitive elements is denoted ${\gr^\flat}\wN$ (or by ${\gr^\flat}_{\wA}\wN$ to emphasize the
dependence\footnote{The {\it set} of strongly primitive elements of $\gr\wN$ does not depend on
$\wA$, so long as $\wN$ is an $\wA$-lattice. (So, for example, we could use instead a quotient of $\wA$ by
an $\sO$-pure ideal acting trivially on $\wN$.)  However, the $\gr\wA$-submodule generated by these
strong primitive elements might depend on $\gr\wA$, and, thus, on $\wA$. (The dependence may sometimes be eliminated with strong hypotheses on $\wN$; see Remark \ref{bigremark2}(b).)} on $\wA$). Since strongly primitive elements in $\gr\wN$ are, by
definition, homogeneous, the $\gr\wA$-module  ${\gr^\flat}\wN$ is a graded submodule of $\gr\wN$.

Now suppose that $\wA$ is a split integral QHA over $\sO$ with poset $\Lambda$. We call $\wA$  {\it toral quasi-hereditary} (TQHA) if it is a $\Lambda$-standard weight algebra over $\sO$, using the given poset structure
on $\Lambda$. Many integral QHAs come naturally equipped with such a structure. In any case, it can usually be assumed, by passing to a Morita equivalent algebra. In particular, we note the following.

\begin{prop}\label{Morita2} Suppose that $\wA$ is a split integral QHA over $\sO$ with poset $\Lambda$. Then $\wA$
is Morita equivalent to a TQHA $\wB$ over $\sO$. In addition, the algebras $\gr\wA$ and $\gr\wA_K$
are Morita equivalent to the algebras $\gr\wB$ and $\gr\wB_K$, respectively.
\end{prop}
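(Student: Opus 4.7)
The plan is to take $\wP := \bigoplus_{\lambda\in\Lambda}\wP(\lambda)$, one copy of each indecomposable projective in $\wAmod$, and set $\wB := \End_\wA(\wP)^{\text{op}}$. The projectors $e_\lambda : \wP \twoheadrightarrow \wP(\lambda) \hookrightarrow \wP$ furnish a complete system of orthogonal idempotents in $\wB$ indexed by $\Lambda$. Since each $\wP(\lambda)$ is an $\wA$-lattice and appears as a direct summand of $\wA$ (with multiplicity $\dim L(\lambda)\geq 1$), $\wP$ is an $\sO$-finite projective generator for $\wAmod$. Consequently $F := \hom_\wA(\wP, -)$ is a Morita equivalence from $\wAmod$ onto the category of $\sO$-finite $\wB$-modules, and the arguments in \cite{CPS1a} transport the split integral QHA structure on $\wA$ (poset $\Lambda$, standard modules $F\wDelta(\lambda) = \hom_\wA(\wP, \wDelta(\lambda))$) to $\wB$. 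It remains only to verify that $\wB$ is $\Lambda$-standard with respect to the idempotents $\{e_\lambda\}$ and to handle the $\gr$-assertions.

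To verify $\Lambda$-standardness, fix $\mathfrak p \in \Spec\sO$. The simple $\wB_{\mathfrak p}$-modules are $FL_{\mathfrak p}(\lambda) = \hom_{\wA_{\mathfrak p}}(\wP_{\mathfrak p}, L_{\mathfrak p}(\lambda))$, $\lambda \in \Lambda$, with $\mu$-weight space $e_\mu FL_{\mathfrak p}(\lambda) = \hom_{\wA_{\mathfrak p}}(\wP(\mu)_{\mathfrak p}, L_{\mathfrak p}(\lambda))$. At $\mathfrak p = \mathfrak m$, $\wP(\mu)_k$ is the projective cover of $L(\mu)$ in $\Amod$, so this Hom is $k$ when $\mu = \lambda$ and $0$ otherwise; both conditions (1) and (2) are immediate. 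At $\mathfrak p = (0)$, the decomposition (\ref{decompostewPlambdatoK}) gives $\wP(\mu)_K \cong P_K(\mu) \oplus \bigoplus_{\nu > \mu} P_K(\nu)^{\oplus m_{\mu,\nu}}$, so this Hom is nonzero precisely when $\mu \le \lambda$, and one-dimensional exactly when $\mu = \lambda$. Thus $\wB$ is $\Lambda$-standard, hence a TQHA.

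For the graded statement, first note that $\gr\wP = \bigoplus_\lambda \gr\wP(\lambda)$ is a projective generator for $\gr\wA$--mod: each $\gr\wP(\lambda)$ is the projective cover of $L(\lambda)$ in $\gr\wA$--mod (as recorded in \S3), and since $\wA$ is a direct summand of a finite power of $\wP$ and the construction $\gr(-)$ commutes with finite direct sums, $\gr\wA$ is a direct summand of a finite power of $\gr\wP$. Next, any projective $\wA$-module is tight (Definition \ref{tightdef1}), so $\wrad^n\wP = (\wrad^n\wA)\wP$. The projectivity of $\wP$ over $\wA$ makes $\hom_\wA(\wP, -)$ exact on the inclusions $\wrad^{n+1}\wP \hookrightarrow \wrad^n\wP \hookrightarrow \wP$, which gives $\wrad^n \wB = \hom_\wA(\wP, \wrad^n\wP)$ and $\wrad^n\wB / \wrad^{n+1}\wB \cong \hom_\wA(\wP, (\gr\wP)_n)$. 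Assembling over all degrees yields a graded isomorphism $\gr\wB \cong \End_{\gr\wA}(\gr\wP)^{\text{op}}$ (endowed with the natural grading coming from $\gr\wP$), and graded Morita theory then exhibits $\gr\wA$ and $\gr\wB$ as Morita equivalent; base-changing along $\sO \to K$ yields the corresponding equivalence between $\gr\wA_K$ and $\gr\wB_K$.

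The main obstacle I anticipate is the identification $\gr\wB \cong \End_{\gr\wA}(\gr\wP)^{\text{op}}$ in the last paragraph: one must track the radical filtration of $\wB_K$ through $\hom_{\wA_K}(\wP_K, -)$, intersect carefully down to $\wB$, and crucially use tightness of $\wP$ to replace $(\wrad^n\wA)\wP$ by $\wrad^n\wP$. The remaining checks—Morita transport of the split integral QHA structure and the weight-space computation at each prime—are routine bookkeeping from the definitions.
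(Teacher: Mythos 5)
Your construction of $\wB$ is the same as the paper's: take $\wP=\bigoplus_{\lambda\in\Lambda}\wP(\lambda)$ and set $\wB=\End_\wA(\wP)^{\mathrm{op}}$, which (after identifying $\wP(\lambda)=\wA e_\lambda$) is $e\wA e$ for $e=\sum_\lambda e_\lambda$. Your verification of $\Lambda$-standardness is also in the spirit of what the paper leaves to the reader (note only that at $\mathfrak p=(0)$ your claim ``nonzero precisely when $\mu\le\lambda$'' should be ``nonzero only when $\mu\le\lambda$''---the $m_{\mu,\nu}$ can vanish---but this is exactly what condition (2) requires, so the conclusion stands). Where you genuinely diverge is the graded part. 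The paper exploits the identification $\wB=e\wA e$ directly: from $\wA e\wA=\wA$ it deduces the single algebraic identity $(e\rad\wA_K e)^n=e\rad^n\wA_K e$, which immediately gives $\gr\wB=e(\gr\wA)e$ with $(\gr\wA)e(\gr\wA)=\gr\wA$, hence Morita equivalence. Your route instead uses that $\wP$ is a tight $\wA$-lattice (so $\wrad^n\wP=(\wrad^n\wA)\wP$), the exactness of $\hom_\wA(\wP,-)$, and the identification of $\gr\wP$ as a projective generator for $\gr\wA$, to arrive at $\gr\wB\cong\End_{\gr\wA}(\gr\wP)^{\mathrm{op}}$. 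This works: the step you flag as the ``main obstacle'' (that $\wrad^n\wB=\hom_\wA(\wP,\wrad^n\wP)$) does go through, since $\rad^n\wB_K=\hom_{\wA_K}(\wP_K,\rad^n\wP_K)$ by projectivity of $\wP_K$ and $f\in\wB$ lies in $\rad^n\wB_K$ iff $f(\wP)\subseteq\wP\cap\rad^n\wP_K$; and the final identification with $\End_{\gr\wA}(\gr\wP)^{\mathrm{op}}$ uses that $\gr\wP\cong\gr\wA\otimes_{(\gr\wA)_0}(\gr\wP)_0$, which holds because $(\gr\wP)_0=\wP/\wrad\wP$ is $(\gr\wA)_0$-projective. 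The trade-off: your argument is more functorial and transparent in its use of tightness, while the paper's $e\wA e$ identity is shorter and avoids graded Morita theory entirely.
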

\begin{proof} Let $\wP(\lambda)$ be the PIM (projective indecomposable module) associated to
$\lambda\in\Lambda$, and put $\wP:=\bigoplus_{\lambda\in\Lambda}\wP(\lambda)$. Clearly, $\wP$ is
a finite projective generator for $\wA$, so that $\wE:=(\End_\wA(\wP))^{\text{\rm op}}$ is
Morita equivalent to $\wA$. If we take $\wP(\lambda)=\wA e_\lambda$ for orthogonal idempotents $\{e_\lambda\}_{\lambda\in \Lambda}$, then $\wE$ identifies with the algebra
$\bigoplus_{\lambda,\mu\in\Lambda}e_\lambda\wA e_\mu$. With this identification, $\{e_\lambda\}_{\lambda\in\Lambda}\subseteq\wE$ is a set of orthogonal idempotents summing to the identity $e:=\sum e_\lambda$
of $\wE$.
It is easily checked that these idempotents give $\wE=e\wA e$ the structure of a $\Lambda$-standard
weight algebra. (Use the equivalence $\wM\mapsto e\wM$ from $\wA$-mod to $\wE$-mod.)

Since $\wA e=\wP$ is a projective generator for $\wA$, we have $\wA e\wA=\wA$. This implies that
$$(e\rad\wA_K e)^n=e\rad^n\wA_Ke,\,\, n\in{\mathbb N},$$
from which the remaining Morita equivalences can be deduced.\end{proof}

\begin{lemma}\label{generation} Let $\wA$ be a TQHA over $\sO$. For $\lambda\in\Lambda$, the standard $\wA$-module
$\wDelta(\lambda)$ is generated by its $\lambda$-weight space $\wDelta(\lambda)_\lambda:=
e_\lambda\wDelta(\lambda)=\wDelta(\lambda)\cap\wDelta_K(\lambda)_\lambda$.\end{lemma}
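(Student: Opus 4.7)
\smallskip
\noindent\textbf{Proof plan for Lemma \ref{generation}.}
The plan is to reduce everything to the residue field $k$ by Nakayama's lemma: we will exhibit a single element $v\in\wDelta(\lambda)_\lambda$ that generates $\wDelta(\lambda)$ over $\wA$, by checking generation modulo $\pi$. The essential inputs are the two $\Lambda$-standard conditions (dimension one and the ``upper triangular'' weight bound) together with the fact that standard modules in a HWC have irreducible head.

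First I would show that $\wDelta(\lambda)_\lambda=e_\lambda\wDelta(\lambda)$ is a free $\sO$-module of rank $1$. Since $\wDelta(\lambda)$ is an $\wA$-lattice, it is $\sO$-free, and $e_\lambda$ is an idempotent, so $e_\lambda\wDelta(\lambda)$ is an $\sO$-direct summand and hence $\sO$-free. Its rank equals $\dim_K e_\lambda\wDelta_K(\lambda)$, which I would compute by analyzing a composition series of $\wDelta_K(\lambda)$: each factor is some $\wL_K(\mu)$ with $\mu\le\lambda$ (by the HWC structure), but $e_\lambda\wL_K(\mu)\ne 0$ forces $\lambda\le\mu$ by condition (2) of $\Lambda$-standardness, so only $\mu=\lambda$ contributes; by condition (1), this contribution is one-dimensional, and $\wL_K(\lambda)$ appears exactly once. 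Hence $\wDelta(\lambda)_\lambda\cong\sO$, and after reduction $(\wDelta(\lambda)_\lambda)_k=e_\lambda\Delta(\lambda)=\Delta(\lambda)_\lambda$ is one-dimensional over $k$.

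Pick $v\in\wDelta(\lambda)_\lambda$ generating it over $\sO$ and let $\bar v\in\Delta(\lambda)_\lambda$ be its image, a $k$-basis. I claim $A\bar v=\Delta(\lambda)$. Suppose $Q:=\Delta(\lambda)/A\bar v\ne 0$. As $Q$ is a nonzero quotient of the highest weight module $\Delta(\lambda)$ (whose head is $L(\lambda)$), it has $L(\lambda)$ as head, so $Q_\lambda\ne 0$ because $L(\lambda)_\lambda\ne 0$ (condition (1) again). But $\bar v\in A\bar v$ spans $\Delta(\lambda)_\lambda$, so $Q_\lambda=0$, a contradiction.

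Finally, the containment $\wA v+\pi\wDelta(\lambda)\subseteq\wDelta(\lambda)$ becomes equality after reducing modulo $\pi$, so by Nakayama's lemma applied to the finitely generated $\sO$-module $\wDelta(\lambda)/\wA v$ over the local ring $\sO$, we get $\wA v=\wDelta(\lambda)$, which is the desired generation by $\wDelta(\lambda)_\lambda$. I expect the only delicate point to be step two, namely the passage from the rank-one statement to head-based generation over $k$, since it is what packages the $\Lambda$-standard hypothesis into a usable generation statement; the Nakayama step and the rank computation are essentially bookkeeping once the weight-space analysis via conditions (1) and (2) is in place.
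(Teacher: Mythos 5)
Your proof is correct and follows the same strategy as the paper's: establish that $\wDelta(\lambda)_\lambda$ is $\sO$-free of rank one, use that the head of the standard module is $L(\lambda)$, and finish by Nakayama. The paper's proof is a three-sentence summary of exactly these steps; your version simply fills in the bookkeeping (the composition-factor argument via conditions (1) and (2) of $\Lambda$-standardness for the rank-one claim, and the explicit pass to $k$ followed by the quotient-has-nonzero-$\lambda$-weight-space contradiction) that the authors leave implicit.
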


\begin{proof} The head of $\wDelta(\lambda)$ is isomorphic to $L(\lambda)$.  Also, $\wDelta(\lambda)_\lambda$ is $\sO$-free of rank 1. The lemma follows from Nakayama's lemma. \end{proof}

 We will sometimes need to assume one or both of the following following hypotheses.

\begin{hyp}\label{hypothesis1} (1) $\wA$ is a TQHA over $\sO$ with poset
$\Lambda$.

\medskip   (2)  The
graded $K$-algebra
$\gr\wA_K$
is a QHA with the same
weight poset $\Lambda$ as $\wA_K$. For
each $\lambda\in\Lambda$, the standard modules in the HWC $\gr \wA_K$-mod of not necessarily graded $\gr\wA_K$-modules are the modules
$\gr \wDelta_K(\lambda)$.

\end{hyp}

\begin{lemma}\label{projectivegradedlemma} Assume that Hypothesis \ref{hypothesis1}(1) holds. Let $\wP$ be projective in $\wAmod$. Then ${\gr^\flat}\wP=\gr\wP$.\end{lemma}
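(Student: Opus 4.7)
The plan is first to reduce to the case where $\wP = \wP(\mu)$ is an indecomposable projective (PIM) for some $\mu \in \Lambda$. Writing $\wP = \bigoplus_i \wP(\mu_i)$ gives $\gr\wP = \bigoplus_i \gr\wP(\mu_i)$, and one verifies via Proposition~\ref{stronglyprimitive}(b) that a strongly primitive element of any PIM summand remains strongly primitive when included into $\wP$: the ambient submodules $\wP\cap(\rad^{i+1}\wP_K + \wP'_K(\lambda))$ decompose as direct sums over the summands, and purity of a direct sum reduces to purity of each summand. Thus it suffices to show, for each PIM $\wP(\mu)$, that $\gr\wP(\mu)$ is $\gr\wA$-generated by a single strongly $\mu$-primitive element.

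Writing $\wP(\mu) = \wA e_\mu$ for a primitive idempotent $e_\mu \in \wA$ (which we may assume by passing via Proposition~\ref{Morita2} to a Morita-equivalent TQHA if necessary), and setting $v_\mu := e_\mu$, a weight-$\mu$ generator of $\wP(\mu)$, the projectivity of $\wP(\mu)$ as a direct summand of $\wA$ gives $\wrad^n\wP(\mu) = (\wrad^n\wA)v_\mu$ for all $n \geq 0$; hence $\gr\wP(\mu) = \gr\wA\cdot[v_\mu]$, generated as a $\gr\wA$-module by the degree-$0$ element $[v_\mu]$. It remains to verify that $v_\mu$ is strongly $\mu$-primitive, via Proposition~\ref{stronglyprimitive}(b) at index $i = 0$. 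The first condition $v_\mu \in \wrad^0\wP(\mu)$ is trivial. For the second, set $Q := \wP(\mu)_K/(\rad\wP(\mu)_K + \wP(\mu)'_K(\mu))$: by (\ref{decompostewPlambdatoK}) the head of $\wP(\mu)_K$ is $\wL_K(\mu) \oplus \bigoplus_{\nu > \mu}\wL_K(\nu)^{m_{\mu,\nu}}$, and the image of $\wP(\mu)'_K(\mu)$ in this head is precisely $\bigoplus_{\nu > \mu}\wL_K(\nu)^{m_{\mu,\nu}}$ (each summand being generated by a highest-weight vector of weight $\nu > \mu$, while $\wL_K(\mu)$ contains no weight-greater-than-$\mu$ vectors), so $Q \cong \wL_K(\mu)$. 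Since $v_\mu$ generates $\wP(\mu)_K$, its image $\bar v_\mu \in \wL_K(\mu)_\mu$ is a nonzero generator of $Q$.

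The main obstacle is the purity condition: $\sO v_\mu + \wF$ must be $\sO$-pure in $\wP(\mu)$, where $\wF := \wP(\mu) \cap (\rad\wP(\mu)_K + \wP(\mu)'_K(\mu))$. By Lemma~\ref{wellknownlemma}(a), this amounts to showing that if $x \in \wP(\mu)$ can be written $x = \alpha v_\mu + y$ with $\alpha \in K$ and $y \in \rad\wP(\mu)_K + \wP(\mu)'_K(\mu)$, then $\alpha \in \sO$. The image of $\wP(\mu)$ under the quotient $\phi : \wP(\mu)_K \twoheadrightarrow \wL_K(\mu)$ is $\wA\bar v_\mu$, an $\sO$-finite $\wA$-submodule of $\wL_K(\mu)$, and $\phi(x) = \alpha\bar v_\mu$ lies in $\wA\bar v_\mu \cap \wL_K(\mu)_\mu = (e_\mu\wA e_\mu)\bar v_\mu$. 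The key step is to show this intersection equals $\sO\bar v_\mu$. Since $\wL_K(\mu)_\mu$ is $1$-dimensional over $K$, the action of $e_\mu\wA e_\mu$ on it is by scalars, giving an $\sO$-algebra homomorphism $\chi : e_\mu\wA e_\mu \to K$ with $\chi(e_\mu) = 1$. The image $\chi(e_\mu\wA e_\mu)$ is then an $\sO$-subring of $K$ containing $\sO$ and $\sO$-finitely generated, since $e_\mu\wA e_\mu$ is $\sO$-finite. Any strictly larger $\sO$-subring of $K$ would contain some $\pi^{-n}$ with $n \geq 1$, hence the $\sO$-linearly independent sequence $\{\pi^{-kn}\}_{k \geq 0}$, contradicting $\sO$-finiteness. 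Hence $\chi(e_\mu\wA e_\mu) = \sO$, forcing $\alpha \in \sO$ as required and completing the proof.
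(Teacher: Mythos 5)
Your proof is correct, and in fact a bit more detailed than the paper's at the reduction step and in checking that $e_\mu$ is strongly $\mu$-primitive, but it takes a genuinely different route at the crucial purity step, so let me compare.

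The paper's argument, after reducing to $\wP = \wP(\lambda)$, does \emph{not} work with the idempotent generator: it picks any $v \in \wP(\lambda)_\lambda$ whose image in $\wDelta(\lambda)_\lambda$ is a $\wDelta(\lambda)$-generator (using Lemma \ref{generation}), shows $[v]$ generates $\gr\wP(\lambda)$ by Nakayama applied to $\text{head}\,\gr\wP(\lambda) \cong L(\lambda)$, and then disposes of the purity condition in one line: since the image $\bar v$ of $v$ is an $\wA$-\emph{generator} of the nonzero lattice $\wM := \wP(\lambda)/\wP(\lambda)\cap(\rad\wP(\lambda)_K+\wP(\lambda)'_K(\lambda))$, if $\bar v \in \pi\wM$ then $\wM = \wA\bar v \subseteq \pi\wM$, which forces $\wM=0$ by Nakayama, a contradiction — and $\bar v \notin \pi\wM$ is exactly what's needed for $\sO\bar v$ to be pure in a torsion-free $\sO$-module. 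You instead take $v_\mu = e_\mu$ explicitly (after passing to the Morita-equivalent algebra of Proposition \ref{Morita2}, so that the weight idempotent and the primitive idempotent generating $\wP(\mu)$ coincide — this step is necessary and you correctly flag it), derive the generation statement from the direct-summand identity $\wrad^n(\wA e_\mu) = (\wrad^n\wA)e_\mu$ rather than Nakayama, and prove purity via the map $\chi: e_\mu\wA e_\mu \to K$ and the observation that an $\sO$-finite $\sO$-subring of $K$ containing $\sO$ must equal $\sO$ (one could shorten this slightly by invoking that a DVR is integrally closed). Your route buys explicitness — every step is a concrete computation — at the cost of needing the Morita reduction and more bookkeeping; the paper's Nakayama-based argument applies directly to an arbitrary generator and is considerably shorter. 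Both are valid.
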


\begin{proof}  First, it is useful to note that every finite projective $\wA$-module $\wP$ has the property that the positive grade terms of $\gr \wP$ are contained in $\rad(\gr\wP)$, the intersection of all
maximal submodules. This follows from the case of $\wP=\wA$ and the nilpotence of the ideal of positive
grade terms. In particular, $\wP$ and $P$ have the same head.  See also the discussion above (\ref{correspondence2}).

Now to prove the lemma, it suffices to treat the case $\wP=\wP(\lambda)$, $\lambda\in\Lambda$.  The
map $\wP(\lambda)_\lambda\to\wDelta(\lambda)_\lambda$ is surjective. So Lemma \ref{generation}
implies there exists an element $v\in\wP(\lambda)_\lambda$
whose image in $\wDelta(\lambda)_\lambda$ is a generator for $\wDelta(\lambda)$. The image of $v$ in $L(\lambda)={\text{head}}\wDelta(\lambda)$ is
nonzero, as is the image of $[v]$ in $L(\lambda)=\text{head}\,\gr\wP(\lambda)$. Thus, $[v]$ generates $\gr\wP(\lambda)$ by Nakayama.   It is, thus, sufficient to show that $v$ is strongly $\lambda$-primitive. However,
the image of $v$ in $\wP(\lambda)/\wP(\lambda)\cap(\rad\wP(\lambda)_K + \wP(\lambda)^\dagger_K(\lambda))$
is an $\wA$-generator of the latter nonzero module, hence generates an $\sO$-pure submodule.
 \end{proof}

 Assuming  Hypothesis \ref{hypothesis1}(1),  $\wDelta(\lambda)$ is generated by a strongly $\lambda$-primitive element
$v_\lambda\in\wDelta(\lambda)_\lambda$ and  there are no $\mu$-primitive elements in $\wDelta(\lambda)$ for $\mu\not=\lambda$. {\it Thus,
${\gr^\flat}\wDelta(\lambda)$ is generated by $ [v_\lambda]\in\gr\wDelta(\lambda)$, but we do not, in general, know that
$\gr\wDelta(\lambda)={\gr^\flat}\wDelta(\lambda)$,} although this does hold if $\lambda$ is maximal in $\Lambda$, by
Lemma \ref{projectivegradedlemma}.\footnote{We cannot just reduce to the maximal case without changing
the algebra $\gr\wA$ that is acting on $\gr\wDelta(\lambda)$. This is a subtle point, since the physical graded
$\sO$-module structure of the latter $\gr\wA$-module remains the same, if we attempt such a reduction.
However, the submodule generated by a given vector might change, as $\gr\wA$ changes}
 In favorable cases---see Lemma \ref{surjective}---if $\wR$ is a pure $\wA$-submodule of $\wN$, the map
 $\gr\wN\to\gr\wN/\wR$ induces a surjection
  $\gr^\flat\wN\to\gr^\flat\wN/\wR$, though we do not have independent conditions which guarantee surjectivity
for $\gr\wN\to\gr\wN/\wR$.  (The best we have is Corollary \ref{Nameless}, whose proof uses the surjection
$\gr^\flat\wN\to\gr^\flat\wN/\wR$ of Lemma \ref{preliminarylemma}, and requires additional hypotheses.

An immediate consequence of the discussion above is the following result.

\begin{lemma}\label{equivalence} Assume Hypothesis \ref{hypothesis1}(1). For $\lambda\in\Lambda$, $\gr \wDelta(\lambda)$ has a simple
head if and only if $\gr^\flat\wDelta(\lambda)=\gr\wDelta(\lambda)$.\end{lemma}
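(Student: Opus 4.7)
The plan is to reduce the lemma to a single Nakayama-type statement. As noted just before the lemma, ${\gr^\flat}\wDelta(\lambda)$ is the cyclic submodule $\gr\wA\cdot[v_\lambda]\subseteq\gr\wDelta(\lambda)$, so the equality ${\gr^\flat}\wDelta(\lambda)=\gr\wDelta(\lambda)$ is equivalent to $[v_\lambda]$ generating $\gr\wDelta(\lambda)$ as a $\gr\wA$-module. Since $\gr\wA$ is a finite $\sO$-algebra with $\pi\in\rad(\gr\wA)$, Nakayama's lemma rephrases this as: $[v_\lambda]$ maps to a generator of the head $H:=\gr\wDelta(\lambda)/\rad(\gr\wA)\gr\wDelta(\lambda)$.

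The essential preliminary observation is that $L(\lambda)$ always occurs as a summand of $H$, situated in grade $0$. To see this, note that the graded projection $\gr\wDelta(\lambda)\twoheadrightarrow(\gr\wDelta(\lambda))_0=\wDelta(\lambda)/\wrad\wDelta(\lambda)$ is a $\gr\wA$-module quotient (positive grades form a graded $\gr\wA$-submodule, and $\wrad\wA$ kills $\wDelta(\lambda)/\wrad\wDelta(\lambda)$); it composes with the further surjection $\wDelta(\lambda)/\wrad\wDelta(\lambda)\twoheadrightarrow L(\lambda)$ onto its own head to give a $\gr\wA$-surjection $\gr\wDelta(\lambda)\twoheadrightarrow L(\lambda)$. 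Thus $L(\lambda)$ is a summand of $H$, and the image of $[v_\lambda]$ in this summand is the image of the $\lambda$-weight generator $v_\lambda$ in $L(\lambda)$, in particular nonzero.

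For $(\Leftarrow)$, if ${\gr^\flat}\wDelta(\lambda)=\gr\wDelta(\lambda)$ then $\gr\wDelta(\lambda)=\gr\wA\cdot[v_\lambda]$ is cyclic on a grade-$0$ element, so positive-grade pieces lie in $(\gr\wA)_+\cdot[v_\lambda]\subseteq\rad(\gr\wA)\gr\wDelta(\lambda)$, forcing $H$ to be concentrated in grade $0$. Combined with the preliminary observation, $H=L(\lambda)$, simple. For $(\Rightarrow)$, if $H$ is simple then by the preliminary observation $H=L(\lambda)$; since $[v_\lambda]$ maps to a generator of $H$, Nakayama yields $\gr\wA\cdot[v_\lambda]=\gr\wDelta(\lambda)$, i.e., ${\gr^\flat}\wDelta(\lambda)=\gr\wDelta(\lambda)$. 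The main delicate point is the preliminary observation about the $L(\lambda)$-summand of $H$, which requires verifying that the relevant quotients are $\gr\wA$-linear and that the head of $\wDelta(\lambda)/\wrad\wDelta(\lambda)$ really is $L(\lambda)$ (using Lemma \ref{generation} and $\Lambda$-standardness); everything else is a clean application of Nakayama.
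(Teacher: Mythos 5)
Your argument is correct and is essentially what the paper has in mind: the paper presents the lemma as ``an immediate consequence of the discussion above,'' and your Nakayama argument—using that $\gr^\flat\wDelta(\lambda)$ is cyclic on the grade-$0$ element $[v_\lambda]$ and that $(\gr\wDelta(\lambda))_0 = \wDelta(\lambda)/\wrad\wDelta(\lambda)$ is a nonzero quotient of $\wDelta(\lambda)$ with simple head $L(\lambda)$—is precisely what that discussion is driving at. One small presentation note: for $(\Leftarrow)$ you actually need the full content of your preliminary observation's proof (that the head of $(\gr\wDelta(\lambda))_0$ is exactly $L(\lambda)$), not just the stated conclusion that $L(\lambda)$ is a summand of $H$; you have established this, but it is worth phrasing the observation as the stronger statement so the last step reads cleanly.
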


The next two results are technical results needed in Lemma \ref{mainlemma}.

\begin{lemma} \label{surjective} Assume Hypothesis \ref{hypothesis1}(1). Suppose $\wN$ is an $\wA$-lattice, and $\wR$ is an $\sO$-pure $\wA$-submodule.
 $\lambda\in\Lambda$, maximal with respect to $\wN_\lambda\not=0$,  such that
the following conditions hold.

\begin{itemize}
\item[(1)] $\wR\subseteq\wA_K\wN_\lambda$.

\item[(2)] $\wR_\lambda+(\wN\cap \rad^{i}\wN_K)_\lambda$ is pure in $\wN$ (or, equivalently, in $\wN_\lambda$),
for all $i\in\mathbb N$.

\item[(3)] Let $j_\lambda$ be the largest index $j$ such that  $\wN_\lambda\subseteq\rad^j\wN_K$. Then
$\wN_\lambda=\wR_\lambda\bigoplus(\widetilde\rad^{j_\lambda}\wN_\lambda).$\footnote{It can
be shown that (3) $\implies$ (2).}
\end{itemize}

Then the map $\gr\wN\to\gr\wN/\wR$ induces a surjection $\gr^\flat\wN\to\gr^\flat(\wN/\wR)$. \end{lemma}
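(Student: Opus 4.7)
My plan is to show, via the natural map $\gr\wN \to \gr(\wN/\wR)$ induced by the quotient $\wN \twoheadrightarrow \wN/\wR$, that every strongly $\mu$-primitive element of $\gr(\wN/\wR)$ is the image of a strongly $\mu$-primitive element of $\gr\wN$ of the same grade. This will suffice because $\gr^\flat(\wN/\wR)$ is generated, as a $\gr\wA$-module, by its strongly primitive elements; moreover, the image of $\gr^\flat\wN$ in $\gr(\wN/\wR)$ is a $\gr\wA$-submodule, so once all strongly primitive elements of $\gr(\wN/\wR)$ are shown to lie in this image, so does $\gr^\flat(\wN/\wR)$.

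Set $\wM := \wN/\wR$ and fix $[\bar w] \in \gr\wM$ strongly $\mu$-primitive of grade $i$, represented by $\bar w \in \wM_\mu$. First I would lift $\bar w$ to some $w_0 \in \wN_\mu$ via the surjection $\wN_\mu \twoheadrightarrow \wM_\mu$; any two lifts differ by an element of $\wR_\mu$. Using Proposition~\ref{stronglyprimitive}(b), the middle defining condition of strong primitivity is automatic for any such lift, since the quotient $\wN \twoheadrightarrow \wM$ carries $\wN \cap (\rad^{i+1}\wN_K + \wN'_K(\mu))$ into $\wM \cap (\rad^{i+1}\wM_K + \wM'_K(\mu))$. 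The remaining tasks are to arrange $w \in \wrad^i\wN$ and the $\sO$-purity of $\sO w + \wN \cap (\rad^{i+1}\wN_K + \wN'_K(\mu))$ in $\wN$, by choosing $w \in w_0 + \wR_\mu$ appropriately.

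I would handle the depth condition $w \in \wrad^i\wN$ by case analysis on $\mu$. When $\mu = \lambda$, condition~(3) supplies the decomposition $\wN_\lambda = \wR_\lambda \oplus \widetilde\rad^{j_\lambda}\wN_\lambda$, and I would take $w$ to be the projection of $w_0$ onto the second summand; since $\bar w$ has grade $i$ in $\gr\wM$, one verifies $j_\lambda \geq i$, which gives $w \in \wrad^i\wN$. When $\mu \neq \lambda$, condition~(1) together with the maximality of $\lambda$ with respect to $\wN_\lambda \neq 0$ forces all composition factors of $\wA_K\wN_\lambda$ to be copies of $\wL_K(\lambda)$, so $\wR_\mu$ vanishes unless $\mu < \lambda$; in either sub-case, elements of $\wR_\mu$ lie sufficiently deep in $\wrad\wN$ via the $\wA_K$-generation of $\wR$ from $\wN_\lambda$, so either $w_0$ already lies in $\wrad^i\wN$ or a small adjustment by an element of $\wR_\mu$ makes it so.

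The main obstacle is the purity condition: verifying that $\sO w + \wN \cap (\rad^{i+1}\wN_K + \wN'_K(\mu))$ is $\sO$-pure in $\wN$. I plan to pull back the analogous purity in $\wM$ along the short exact sequence $0 \to \wR \to \wN \to \wM \to 0$, using the $\sO$-purity of $\wR$ in $\wN$ and condition~(2) at the index $i+1$ to control the weight-$\lambda$ contributions arising in the pullback. Conditions~(1)--(3) are tailored so that the relevant submodules of $\wN$ differ from their $\wM$-images only via controlled contributions from $\wR_\lambda$, and the purity asserted in condition~(2) (noting the footnote that (3)$\Rightarrow$(2)) is exactly what is needed to split these off without destroying $\sO$-purity. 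Once purity is established, Proposition~\ref{stronglyprimitive}(b) yields that $[w] \in \gr\wN$ is strongly $\mu$-primitive of grade $i$ and maps to $[\bar w]$ in $\gr\wM$, completing the proof.
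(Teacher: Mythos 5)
Your proposal omits half the proof. The conclusion that the quotient map \emph{induces} a surjection $\gr^\flat\wN\to\gr^\flat(\wN/\wR)$ requires two containments: that $\gr^\flat(\wN/\wR)$ lies in the image of $\gr^\flat\wN$, \emph{and} that the image of $\gr^\flat\wN$ actually lands inside $\gr^\flat(\wN/\wR)$. You establish only the first, and the second is not automatic: the image of a strongly primitive element of $\gr\wN$ could \emph{a priori} be a nonzero, non-homogeneous-generator element of $\gr(\wN/\wR)$ that is not strongly primitive. The paper's proof devotes a full second half to showing that the image of a strongly primitive element in $\gr\wN$ is either strongly primitive or zero, again using a case division on how $\mu$ compares with $\lambda$ and invoking condition~(3) to kill the weight-$\lambda$ elements of grade exceeding $j_\lambda$. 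Without this, you have not shown that the restriction of $\gr\wN\to\gr(\wN/\wR)$ to $\gr^\flat\wN$ even has codomain $\gr^\flat(\wN/\wR)$.

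Within the direction you do treat, there are two further problems. First, the claim that condition~(1) and maximality of $\lambda$ ``force all composition factors of $\wA_K\wN_\lambda$ to be copies of $\wL_K(\lambda)$'' is false in general: a module generated by its $\lambda$-weight space (such as $\wDelta_K(\lambda)$ itself) typically has many composition factors $\wL_K(\nu)$ with $\nu<\lambda$. What one actually needs is the coarser observation used in the paper: when $\mu<\lambda$ (Case~1), condition~(1) gives $\wR_K\subseteq\wA_K\wN_{K,\lambda}\subseteq\wN'_K(\mu)$, so that $\wR_K$ can be absorbed into $\wN'_K(\mu)$ and disappears from the defining expression for strong $\mu$-primitivity; when $\mu$ is incomparable to $\lambda$ (Case~2), the argument is a $\mu$-weight-space computation, not a vanishing of $\wR_\mu$. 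Your two-case split ($\mu=\lambda$ vs.\ $\mu\neq\lambda$) conflates these two genuinely different situations. Second, the purity verification is only gestured at; the paper handles it via explicit manipulations of the submodules $\sO v+\wN\cap(\rad^{i+1}\wN_K+\wR_K+\wN'_K(\mu))$ in each of the three cases, and your appeal to ``pulling back purity along a short exact sequence'' does not identify which inclusion of $\sO$-modules one is claiming to be a direct summand, or why. These gaps would need to be filled before the argument would be complete.
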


\begin{proof} We first prove that each strongly primitive element in $\gr^\flat\wN/\wR$ is
the image of a strongly primitive element in $\gr\wN$.

Let $[v]\in(\gr\wN/\wR)_i$ be such a strongly primitive element, represented (through some abuse of notation) by
$v\in\wN$. We take $[v]$ to be $\mu$-strongly primitive for some $\mu\in\Lambda$.
Thus, for some grade $i$, $v\in \wN\cap(\rad^i\wN_K+\wR_K)$, the $\sO$-module  $\sO v +\wN\cap
(\rad^{i+1}\wN_K+\wR_K +\wN'(\mu)_K)$ is pure, and does not collapse to the right hand summand.  Without
loss, $v\in\wN_\mu\not=0$. Since $\lambda$ is maximal with $\wN_\lambda\not=0$, there are three cases to consider.

\smallskip\noindent{\underline{Case 1: $\lambda>\mu$}} Using condition (1), we have that $\wR_K\subseteq \wA_K\wN_K\subseteq\wN'(\mu)_K$.
Thus, $\wN\cap(\rad^{i+1}\wN_K+\wR_K+\wN'(\mu)_K)=\wN\cap(\rad^{i+1}\wN_K+ \wN'(\mu)_K)$.

Thus, the image of $v\in (\gr\wN)_i$ is $\mu$-strongly primitive, and maps to $[v]$ in $(\gr\wN/\wR)_i$.

\smallskip\noindent{\underline {Case 2: $\mu\not<\lambda$ and $\mu\not=\lambda$.}}  Then $(\wN'(\mu)_K)_\lambda=0$, so that, using condition (1), we get
$$\wN\cap(\rad^{i+1}\wN_K+\wR_K+\wN'(\mu)_K))_\mu=(\wN\cap(\rad^{i+1}\wN_K+\wN'(\mu)_K)_\mu$$
so that, again, $\sO v+\wN\cap(\rad^{i+1}\wN_K+\wN'(\mu)_K)$ is pure and does not collapse to the right-hand
summand.  Again, the image of $v$ in $(\gr \wN)_i$ is strongly primitive, and maps to $[v]$ in $(\gr\wN/\wR)$.

\smallskip\noindent{\underline{Case 3: $\mu=\lambda$.}} Here $\wN'(\mu)=\wN'(\lambda)=0$ by the maximality of
$\lambda$. For each $j\in\mathbb N$, $\wR_\lambda+(\widetilde\rad^i\wN)_\lambda$ is a lattice in
$(\wR_K + \rad^j\wN_K)_\lambda$, and is pure in $\wN$ by condition (2) . Hence,
$$(\wN\cap(\rad^j\wN_K+\wR_K))_\lambda=\wR_\lambda+(\widetilde\rad^j\wN)_\lambda,\quad
j\in\mathbb N.$$
 We claim that
$i>j_\lambda$. If not, then $\wN_\lambda\subseteq\wR_\lambda+(\wN\cap\rad^{i+1}\wN_K)$ by condition  (3). Thus,
$$\sO v+(\wR_\lambda +\wN\cap\rad^{i+1}\wN_K)=\wR_\lambda+\wN\cap\rad^{i+1}\wN_K,$$
contradicting the strong primitivity of $[v]$ in $\gr\wM/\wR$, taking $j=i$ in the previous display.

Thus, we may assume that $i>j_\lambda$. Notice $[v]=[v']$ if $v'\in v+\wR_\lambda$ and, of course,
the expression $\sO v + (\wR+\rad^{i+1}\cap\wN_K)$ remains unchanged if $v'$ replaces $v$. Using
condition (3), we may now assume that $v\in\wN_\lambda\cap\rad^{j_\lambda+1}\wN_K$.
$$\begin{aligned} (\wN_\lambda\cap\rad^{j_\lambda+1}&\wN_K)\cap(\sO v+\wR+\wN\cap\rad^{i+1}\wN_K)_\lambda\\
&=\sO v +\wR_\lambda\cap\rad^{j_\lambda+1}\wN_K +\wN_\lambda\cap\rad^{i+1}\wN_K
\\ &=\sO v+(\wN\cap\rad^{i+1}\wN_K).\end{aligned}$$
So the image of $v$ in $(\gr\wN)_i$ is strongly $\lambda$-primitive, and maps to to $[v]\in(\gr\wN/\wR)_i$.
This completes the proof in all cases that $[v]$ is the image of a strongly primitive element of $\gr\wN$.

It remains to prove that the image of $\gr^\flat\wN$ in $\gr\wN/\wR$ is contained in $\gr^\flat\wN/\wR$. For
this it is sufficient to show that the image in $\gr \wN/\wR$ of a strongly primitive element in $\gr \wN$ is
either strongly primitive or zero.

Suppose that $u\in\wN_\mu\cap\rad^i\wN_K$ represents a strongly primitive element $[u]$ in
$(\gr\wN)_i$, and suppose that the image of $[u]$ in $(\gr\wN/\wR)_i$ is not zero. There are again three
cases, as above, depending on the relationship of $\mu$ to $\lambda$.

If $\mu\not=\lambda$, we may use the formulas developed in Cases 1 and 2 above to prove the image of $[u]$ in $\gr(\wN/\wR)$ is strongly primitive. If $\mu=\lambda$ and $i>j_\lambda$, condition (3) implies the image of $[u]$ in
$\gr\wN/\wR$ is zero, as is, essentially argued in the discussion of Case 3. This completes the proof of
the lemma.
\end{proof}

A {\it prestandard module} of weight $\lambda\in\Lambda$ is a graded $\gr\wA$-module $\wD(\lambda)$ which is a submodule of
 $\gr\wDelta(\lambda)$, and which satisfies the condition that $(\gr\wDelta(\lambda))_{\lambda}=\wD(\lambda)_\lambda$. This implies that
 $\wD(\lambda)$ is a full $\gr\wA$-lattice in $\gr\Delta_K(\lambda)$. Equivalently, $\wD(\lambda)$ is a
graded $\gr\wA$-module satisfying
${\gr^\flat}\wDelta(\lambda)\subseteq\wD(\lambda)\subseteq\gr \wDelta(\lambda)$. For example, both ${\gr^\flat}\wDelta(\lambda)$ and $\gr\wDelta(\lambda)$
are prestandard modules of weight $\lambda$. The notation ${\gr^\flat}\wDelta(\lambda)$ depends on the algebra $\wA$. Thus, if $\wA'=\wA/\wJ$ is a natural
quasi-hereditary quotients associated with the poset ideal $\Gamma$ containing $\lambda$, then $\gr\wDelta(\lambda)$ is a module for $\gr\wA'$, but
the analogue ${\gr^\flat}_{\wA'}\wDelta(\lambda)$ of ${\gr^\flat}\wDelta(\lambda)$ constructed for $\wA'$, rather than $\wA$, may be larger (because the map $\gr\wA\to\gr\wA'$ may not
be surjective). However, if $\wD(\lambda)\subseteq\wDelta(\lambda)$ is prestandard with respect to $\gr\wA$, and is also a $\gr\wA'$-module, then
it remains  prestandard and still satisfies the required sandwich property. We have
$${\gr^\flat}\wDelta(\lambda)\subseteq\gr^\flat_{\wA'}\wDelta(\lambda)\subseteq\wD(\lambda)\subseteq\gr_{\wA'}\wDelta(\lambda)=\gr\wDelta(\lambda).$$
This is because ${\gr^\flat}_{\wA'}\wDelta(\lambda)$ is generated over $\gr\wA'$ by $(\gr\wDelta(\lambda))_\lambda$. In particular, ${\gr^\flat}_{\wA'}\wDelta(\lambda)$ is
itself prestandard. Later, we introduce much stronger hypotheses which guarantee that ${\gr^\flat}\wDelta(\lambda)=\gr\wDelta(\lambda)$, so that
all of this structure collapses. But, for now, we will keep track of it.

\begin{lemma}\label{preliminarylemma} Assume Hypothesis \ref{hypothesis1}(1). Suppose that $\wE$ is a graded $\gr\wA$-module satisfying
$$\bigoplus_{i=1}^n{\gr^\flat}\wDelta(\lambda_i)(s_i)\subseteq\wE\subseteq\bigoplus_{i=1}^n\gr\wDelta(\lambda_i)(s_i)$$
for $\lambda_1,\cdots,\lambda_n\in\Lambda$, $s_1,\cdots, s_n\in\mathbb Z$.
Then there are prestandard modules $\wD(\lambda_i)$ such that  $\wE$ has a filtration $0=\wF_0\subseteq\wF_1\subseteq\cdots\subseteq\wF_n=\wE$
 with sections $\wF_i/\wF_{i-1}\cong \wD(\lambda_i)(s_i)$
with $\lambda_i$ and $s_i$ as above, $i=1,\cdots, n$.\footnote{A different ordering of the weights $\lambda_1,\cdots,\lambda_n$ could conceivably
 result in different prestandard modules $\wD(\lambda_i)$.} (It may be that $\wD(\lambda_i)$ is distinct from $\wD(\lambda_j)$, even if $\lambda_i=\lambda_j$.)
\end{lemma}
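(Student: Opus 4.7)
The plan is to construct the required filtration by intersecting $\wE$ with the partial direct-sum stages of the ambient module. Write $M_i := \gr\wDelta(\lambda_i)(s_i)$ and $N_i := {\gr^\flat}\wDelta(\lambda_i)(s_i)$, so the hypothesis reads $\bigoplus_{i=1}^n N_i \subseteq \wE \subseteq \bigoplus_{i=1}^n M_i$. Define the graded $\gr\wA$-submodules
\[
\wF_j \;:=\; \wE \cap \bigl(M_1 \oplus M_2 \oplus \cdots \oplus M_j\bigr), \qquad 0 \le j \le n,
\]
producing a chain $0 = \wF_0 \subseteq \wF_1 \subseteq \cdots \subseteq \wF_n = \wE$ of graded submodules.

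The key step is to identify the section $\wF_j / \wF_{j-1}$ as a shifted prestandard module. Let $p_j \colon \bigoplus_i M_i \to M_j$ be the projection onto the $j$th summand. Its restriction to $\wF_j$ has kernel
\[
\wF_j \cap \ker p_j \;=\; \wE \cap \bigl(M_1 \oplus \cdots \oplus M_{j-1}\bigr) \;=\; \wF_{j-1},
\]
so passage to the image gives a graded $\gr\wA$-isomorphism $\wF_j/\wF_{j-1} \cong p_j(\wF_j) \subseteq M_j$. We take $\wD(\lambda_j)$ to be the unshifted module $p_j(\wF_j)(-s_j) \subseteq \gr\wDelta(\lambda_j)$, so that $\wF_j/\wF_{j-1} \cong \wD(\lambda_j)(s_j)$ as required.

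It remains to verify the prestandard sandwich ${\gr^\flat}\wDelta(\lambda_j) \subseteq \wD(\lambda_j) \subseteq \gr\wDelta(\lambda_j)$. The upper inclusion is immediate from $p_j(\wF_j) \subseteq M_j$. For the lower inclusion, $N_j$ lies inside $\wE$ by the standing hypothesis and inside $M_j$ as a summand of $\bigoplus_i M_i$; hence $N_j \subseteq \wE \cap M_j \subseteq \wF_j$. Since $p_j$ is the identity on $M_j$, we get $N_j = p_j(N_j) \subseteq p_j(\wF_j)$, which unshifts to ${\gr^\flat}\wDelta(\lambda_j) \subseteq \wD(\lambda_j)$.

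I expect no serious obstacle: this lemma is essentially a bookkeeping exercise, once the intersection filtration is chosen. The only mild subtleties are a clean handling of the grading shifts $(s_i)$, and the observation that reordering the indices $1,\dots,n$ generally yields a different filtration with different prestandard sections $\wD(\lambda_i)$, matching the footnote in the statement. Nothing in this argument uses Hypothesis \ref{hypothesis1}(1) beyond the fact that $\gr\wA$ is a graded $\sO$-algebra acting on the graded $\sO$-modules involved; the TQHA structure enters only implicitly, through the prior definition of ${\gr^\flat}$ and of prestandard modules.
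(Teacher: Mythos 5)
Your proof is correct and is exactly the argument the paper has in mind; where the authors write only ``an easy induction, using projections, on the number $n$ of summands,'' you have unrolled the induction and spelled out the details (defining $\wF_j=\wE\cap(M_1\oplus\cdots\oplus M_j)$, identifying $\wF_j/\wF_{j-1}\cong p_j(\wF_j)$ via the first isomorphism theorem, and verifying the prestandard sandwich ${\gr^\flat}\wDelta(\lambda_j)\subseteq p_j(\wF_j)(-s_j)\subseteq\gr\wDelta(\lambda_j)$). No gap; same approach.
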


\begin{proof}The proof is an easy induction, using projections, on the number $n$ of summands.\end{proof}

Now we are ready to prove the main lemma. For convenience, we assume both parts of Hypotheses \ref{hypothesis1}.

\begin{lemma}\label{mainlemma} Assume Hypothesis \ref{hypothesis1}.
Let $\wN$ be an $\wA$-lattice which has a $\wDelta$-filtration. Assume also that $\gr \wN_K
=(\gr\wN)_K$ has a $\gr\Delta_K$-filtration as a graded $\gr\wA_K$-module.  Then ${\gr^\flat}\wN$
has a graded filtration with sections graded $\gr\wA$-modules which have the form $\wD(\lambda)(s)$, $\lambda\in\Lambda$ and $s\in{\mathbb N}$,
for prestandard modules $\wD(\lambda)$. (Different sections can be associated to non-isomorphic prestandard modules of the same weight $\lambda$.)
 Applying $K\otimes_{\sO}-$ gives a graded
$\gr\Delta_K$-filtration of $\gr \wN_K$. (In particular, ${\gr^\flat}\wN$ is a full lattice in $\gr \wN_K$.) Moreover, we may choose the filtration
$$0=\wF_0\subseteq \wF_1\subseteq\cdots\subseteq\wF_r={\gr^\flat}\wN$$
so that , if $\lambda<\lambda'$ belong to $\Lambda$ and $\wF_i/\wF_{i-1}\cong
\wD(\lambda)(s)$, $\wF_j/\wF_{j-1}\cong\wD(\lambda')(s')$, for some $i,j,s,s'$, then
$i>j$.
\end{lemma}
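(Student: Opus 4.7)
The plan is to argue by induction on the length $r$ of a $\wDelta$-filtration of $\wN$, with Lemmas~\ref{surjective} and \ref{preliminarylemma} as the principal engines. For the base case $r=1$, $\wN = \wDelta(\lambda)$ and $\gr^\flat\wN = \gr^\flat\wDelta(\lambda)$ is itself prestandard by definition; Hypothesis~\ref{hypothesis1}(2) ensures its base change to $K$ is the standard module $\gr\Delta_K(\lambda)$, giving the trivial one-step $\gr\Delta_K$-filtration of $\gr\wN_K$ and hence the full-lattice conclusion.

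For the inductive step, I would arrange the $\wDelta$-filtration $0 \subset \wN_1 \subset \cdots \subset \wN_r = \wN$ so that a weight $\lambda \in \Lambda$ maximal among those appearing in $\wN$ occupies the bottom with its full multiplicity $m$: using $\Ext^1(\wDelta(\lambda),\wDelta(\lambda))=0$, one takes $\wN_m \cong \wDelta(\lambda)^{\oplus m}$ as a pure $\wA$-submodule, so that $\wN/\wN_m$ is a $\wDelta$-filtered lattice of length $r-m$ with all sections of weight $\mu \neq \lambda$ (necessarily $\mu < \lambda$ or $\mu$ incomparable to $\lambda$). Setting $\wR = \wN_m$, I apply Lemma~\ref{surjective} to $(\wN,\wR,\lambda)$: condition (1) is immediate from Lemma~\ref{generation}, since $\wDelta(\lambda)^{\oplus m}$ is generated by its $\lambda$-weight space; conditions (2) and (3), which are purity and radical-depth statements about $\wN_\lambda$, are the place where the hypothesis that $\gr\wN_K$ carries a $\gr\Delta_K$-filtration must be invoked, forcing the $\lambda$-weight vectors of $\wN$ into a single graded layer of $\gr\wN$ and yielding the required decomposition $\wN_\lambda = \wR_\lambda \oplus (\wrad^{j_\lambda+1}\wN)_\lambda$ together with the purity in (2). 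Lemma~\ref{surjective} then delivers a surjection $\gr^\flat\wN \twoheadrightarrow \gr^\flat(\wN/\wR)$.

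The inductive hypothesis supplies $\gr^\flat(\wN/\wR)$ with an ordered prestandard filtration; pulling it back through the surjection and prepending an $m$-step prestandard filtration of weight $\lambda$ for the kernel, which I would identify with $\gr^\flat\wR = \gr^\flat(\wDelta(\lambda)^{\oplus m})$ and analyze via Lemma~\ref{preliminarylemma}, yields the desired filtration of $\gr^\flat\wN$, with all weight-$\lambda$ sections at the bottom and all other sections sitting higher, exactly as demanded by the ordering constraint $\lambda < \lambda' \Rightarrow i > j$. Applying $K\otimes_\sO -$ converts each prestandard section $\wD(\mu)(s)$ into $\gr\Delta_K(\mu)(s)$, recovering the hypothesized $\gr\Delta_K$-filtration of $\gr\wN_K$ and simultaneously proving that $\gr^\flat\wN$ is a full lattice. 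The main obstacles will be (a) translating the graded $K$-side hypothesis into the integral purity-and-depth statements required by conditions (2)--(3) of Lemma~\ref{surjective}, and (b) identifying the kernel of $\gr^\flat\wN \to \gr^\flat(\wN/\wR)$ precisely as $\gr^\flat\wR$ rather than something strictly larger, which amounts to ruling out ``extra'' strongly primitive elements not accounted for by either $\wR$ or by lifts from $\wN/\wR$.
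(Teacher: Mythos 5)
There is a genuine gap: the choice $\wR = \wN_m \cong \wDelta(\lambda)^{\oplus m}$ (all copies of $\wDelta(\lambda)$ at once) cannot in general satisfy condition~(3) of Lemma~\ref{surjective}. You assert that the hypothesis of a $\gr\Delta_K$-filtration on $\gr\wN_K$ ``forces the $\lambda$-weight vectors of $\wN$ into a single graded layer,'' but this is false: the $\gr\Delta_K(\lambda)$-sections of $\gr\wN_K$ can occur with \emph{distinct} shifts $m_1\leq m_2\leq\cdots\leq m_d$, and then $\wN_\lambda$ is spread over several grades. In that case, with $\wR=\wM$ (so $\wR_\lambda=\wN_\lambda$, since $\lambda$ is maximal) and $j_\lambda=m_1$, condition~(3) would read $\wN_\lambda = \wN_\lambda \oplus \wrad^{m_1+1}\wN_\lambda$, which fails because $\wrad^{m_1+1}\wN_\lambda$ contains the nonzero higher-grade pieces of $\wN_\lambda$. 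The paper circumvents this precisely by \emph{not} removing $\wM$ wholesale: it decomposes $\wM_\lambda = \bigoplus_n \wM_{\lambda,n}$ by grade, sets $\wR = \wA\wM_{\lambda,m_1}$ (only the lowest-grade copies of $\wDelta(\lambda)$), and this $\wR$ does satisfy conditions~(2) and~(3), since $\wR_\lambda$ is exactly an $\sO$-complement to $\wrad^{m_1+1}\wN_\lambda$ in $\wN_\lambda$. The induction then descends to $\wN/\wR$, which generally still contains further copies of $\wDelta(\lambda)$, so the sections of a fixed weight need not be contiguous in the filtration — only the descending order constraint $\lambda<\lambda'\Rightarrow i>j$ survives.

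A secondary issue in your sketch: the kernel of $\gr^\flat\wN\to\gr^\flat(\wN/\wR)$ is not literally $\gr^\flat\wR$, but rather $\gr^\flat\wN\cap\gr^\#\wR$, where $\gr^\#\wR$ is the graded module induced from the radical filtration of $\wN$ restricted to $\wR$ (not the intrinsic radical filtration of $\wR$). The paper sandwiches this kernel between $\gr^\flat\wDelta(\lambda)(m_1)\otimes_\sO\wM_{\lambda,m_1}$ and $\gr\wDelta(\lambda)(m_1)\otimes_\sO\wM_{\lambda,m_1}$ and invokes Lemma~\ref{preliminarylemma} to split it into prestandard pieces; this is exactly why the lemma's conclusion speaks of prestandard modules rather than $\gr^\flat\wDelta$ or $\gr\wDelta$ specifically. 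Your instinct that identifying the kernel would be an obstacle was correct, but the resolution is to work with the $\gr^\#$ construction and $\wR=\wA\wM_{\lambda,m_1}$, not with $\gr^\flat\wR$ for $\wR=\wM$.
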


\begin{proof} Since $\wN$ has a $\wDelta$-filtration, it contains a pure submodule $\wM$
such that $\wM$ is a direct sum of copies of $\wDelta(\lambda)$, for some
$\lambda\in\Lambda$ such that all weights of irreducible sections of $\wN/\wM$ are smaller than $\lambda$
or not comparable to it. The module $\wN/\wM$ also has a $\wDelta$-filtration.

Put
$$\gr^\#\wM=\bigoplus_{n\geq 0}\frac{\wM\cap(\rad\wA_K)^n\wN_K}{\wM\cap
(\rad\wA_K)^{n+1}\wN_K}.$$
There is an obvious inclusion map $\iota:\gr^\#\wM\to\gr\wN$ of graded $\gr\wA$-modules,
and the image $\Imm\iota$ is the kernel of the natural map $\gr\wN\to
\gr\wN/\wM$. (We do not claim this latter map is surjective.)
 The module $\gr^\#\wM$ is a full lattice in an analogously defined graded $\gr\wA_K$-module $\gr^\#M_K$,  as discussed in \cite{PS9} (without the
 $K$ subscript), and there is an analogous inclusion
$\iota_K:\gr^\# M_K\to\gr \wN_K$ of graded $A_K$-modules, with $\Imm\iota_K$ the kernel of the natural map $\gr \wN_K\to\gr (\wN_K/\wM_K)$.

Let $d=d(\lambda)=\dim\,N_{K,\lambda}$. Thus,
$\wM$ is a direct sum of $d$ copies of $\wDelta(\lambda)$ by the construction of $\wM_K$.
Also, $\gr \wN_K$ must contain a direct sum of $d$ copies of the modules $\gr \wDelta_K(\lambda)$, viewed, for the moment, as ungraded $\gr \wA_K$-modules.
($\wN_K$ will also contain a graded version of the direct sum; see below.) Since $\wN_K/\wM_K$ has zero $\lambda$-weight space, this direct sum
must be contained in $\Imm\,\iota_K$. By dimension considerations, it must equal $\Imm\iota_K$. Taking gradings into account, we have a
decomposition
 $$\gr^\# \wM_K\cong\bigoplus_{i=1}^d\gr\wDelta_K(\lambda)(m_i)$$
for some non-negative integers $m_1\leq m_2\leq\cdots \leq m_d$. As usual,
the notation $\gr\wDelta_K(\lambda)(m)$ indicates that the usual grade $0$
head of $\gr\wDelta_K(\lambda)$ (viz., $\gr\wDelta_K(\lambda)(0)$) is given grade $m$ ($m\in\mathbb Z$).

Write $\wM_\lambda$ as a direct sum
$$\wM_\lambda=\bigoplus_{n\geq 0} \wM_{\lambda,n}$$
where $\wM_{\lambda,n}$ is an $\sO$-module chosen as a complement to $\wM_\lambda\cap
(\rad\wA_K)^{n+1}\wN_K$ in $\wM_\lambda\cap(\rad\wA_K)^n\wN_K$. The latter $\sO$-module is pure in
$\wM$ and is the full $\lambda$-weight space in $\wM\cap(\rad\wA_K)^n\wN_K$. The image $\gr^\#\wM_{n,\lambda}$ of
$\wM_{\lambda,n}$ in $(\gr^\#\wM)_n$ is the full $\lambda$-weight space $(\gr^\#\wM)_{n,\lambda}$. The image is $0$ unless $n=m_i$ for some $i$. In fact, the number
of $m_i$ equal to $n$ is the rank of $\wM_{\lambda,n}$. We have
$$\begin{cases}\wM=\wA\wM_\lambda=\bigoplus_{n\geq 0}\wA\wM_{\lambda,n}\\[2mm]
\wA\wM_{\lambda,n}\cong\wDelta(\lambda)\otimes_{\sO}\wM_{\lambda,n}.\end{cases}$$
Here, in the tensor product, $\wM_{\lambda,n}$ is regarded as an $\sO$-module only.

Recall that $m_1$ is the smallest integer $n$ with $\wM_{\lambda,n}\not=0$. Thus,
$$\begin{cases} \wM\subseteq(\rad\wA_K)^{m_1}\wN_K\\[2mm]
\wM\not\subseteq(\rad\wA_K)^{m_1+1}\wN_K.\end{cases}$$
The two displays above show that the two naturally isomorphic $\sO$-submodules $\wM_{\lambda,m_1}\subseteq\wN$ and $\gr^\#\wM_{\lambda,m_1}\subseteq(\gr\wN)_{m_1}\subseteq
\gr\wN$ are generated as $\sO$-modules by strongly primitive elements of $\wN$ and $\gr\wN$, respectively.
Put $\wR=\wA\wM_{\lambda,m_1}\cong\wDelta(\lambda)\otimes_\sO\wM_{\lambda,m_1}$, and $R_K=K\wR$. Form the graded
module $\gr^\#\wR\hookrightarrow\gr\wN$ as constructed for $\wM$, and note $(\gr^\#\wR)_\lambda\subseteq{\gr^\flat}\wN$ identifies naturally with $\wM_{\lambda,m_1}$.

We may identify $\gr^\#\wR$ with a submodule
of $\gr^\#\wM$. As such,
\begin{equation}\label{isoabove}\gr^\#\wR_{\lambda,m_1}=\gr^\#\wM_{\lambda,m_1}\cong \gr\wN_{\lambda,m_1}\end{equation}
 has rank $\dim M_{K,\lambda,m_1}$.
Consequently, since $\gr \wN_K$ has a graded submodule consisting of a direct sum of $\dim\,M_{K,\lambda,m_1}$-copies
of $\gr\wDelta_K(\lambda)(m_1)$, the image of $\gr^\# \wR_K$ must be that submodule. Thus, there is a graded $\gr\wA_K$-isomorphism
$\gr^\#\wR_K\cong\gr\wDelta_K(\lambda)(m_1)\otimes_K \wM_{K,\lambda,m_1}$.

The isomorphism (\ref{isoabove}) has implications for the filtration terms $\wR_K\cap(\rad\wA_K)^n\wN_K$, $n\geq 0$, whose successive quotients define the
grades of $\gr^\# R$. In fact, it is easy to see
\begin{equation}\label{displayed}
(\rad \wA_K)^s \wR_K=\wR_K\cap(\rad \wA_K)^{m_1+s}\wN_K
,\quad s\geq 0.\end{equation}
(The proof that each $\wR_K\cap(\rad \wA_K)^n\wN_K$ must be some $(\rad \wA_K)^s\wR_K$ is an easy downward induction on
$n$, starting the largest $n$ for which $\wR_K\cap(\rad \wA_K)^n\wN_K\not=0$. The precise indexing can then be made from the
fact that $\wR_K\cap(\rad \wA_K)^{m_1+1}\wN_K$ is the largest intersection not equal to $\wR_K$.)

Since the grade $n$ terms of $\gr^\#\wR$ are defined by successive quotients of the filtration terms $\wR\cap(\rad \wA_K)^n\wN_K$, (\ref{displayed}) gives
$$ \gr^\#\wR \cong (\gr\wR)(m_1)\cong (\gr\wDelta(\lambda))(m_1)\otimes_\sO\wM_{\lambda,m_1}$$
in $\gr\wA$-grmod.

To complete the proof by induction on $\dim
 \wN_K$, we next check that $\wN/\wR$ satisfies the hypotheses of the lemma. By construction,
$\wN/\wM$ has a filtration with sections $\wDelta(\nu)$, $\nu\in\Lambda\backslash\{\lambda\}$, while $\wM/\wR$ is visibly an ungraded direct
sum of copies of $\wDelta(\lambda)$. Similarly, $\gr(\wN_K/\wM_K)$ must, by construction, have a graded filtration with sections
modules $\gr\wDelta_K(\mu)(s)$, $\mu\in\Lambda\backslash\{\lambda\}$, $s\geq 0$. The kernel of the map
$\gr(\wN_K/\wR_K)\to\gr(\wN_K/\wM_K)$ is the cokernel of the map $\gr^\# \wR_K\to\gr^\# \wM_K$, which is, visibly, a direct sum
of graded $\gr \wA_K$-modules $\gr\wDelta_K(\lambda)(t)$, $t\geq 0$. Thus, $\wN/\wR$ satisfies the hypotheses of the lemma.

By induction, ${\gr^\flat} (\wN/\wR)$ has a graded filtration with sections various modules $\gr\wDelta(\nu)(s)$, $\nu\in\Lambda$, $s\in\mathbb N$, here, and in
the statement of the lemma.
Applying $K\otimes_\sO-$, these become sections of a graded $\gr\Delta_K$-filtration of $\gr \wN_K/\wR_K$.
It is easily checked that the hypotheses of Lemma \ref{preliminarylemma}  apply to $\wN$  and $\wR$, so that the map $\gr\wN
\to\gr\wN/\wR$  is surjective. The latter map has kernel ${\gr^\flat}\wN\cap\gr^\#\wR$, which contains $\gr \wM_{\lambda,m_1} $. Thus,
$${\gr^\flat}\wDelta(\lambda)(m_1)\otimes_{\sO}\wM_{\lambda,m_1}\subseteq{\gr^\flat}\wN\cap\gr^\#\wR
\subseteq\gr^\#\wR\cong\gr\wDelta(\lambda)(m_1)\otimes_\sO\wM_{\lambda,m_1},$$
noting the isomorphism ${\gr^\flat}\wDelta(\lambda)(m_1)\otimes_{\sO}\wM_{\lambda,m_1}\cong(\gr\wA)\gr\wM_{\lambda,m_1}$. The
 lemma now follows by Lemma \ref{preliminarylemma}, applied to $\wE={\gr^\flat}\wN\cap\gr^\#\wR$, and induction. \end{proof}

\begin{remark}\label{bigremark} (a) Character arguments in the (common) Grothendieck group for $\wA_K$-mod and $\gr\wA_K$-mod show that $[\wN_K:\Delta_K(\lambda)]=[\gr\wN_K:\gr\Delta_K(\lambda)]$ for $\wN$ as in
Lemma \ref{mainlemma}. Also, $[\wN_K:\Delta_K(\lambda)]$ is obviously the same as $[\wN:\wDelta(\lambda)]$ and $[\gr\wN_K:\gr\Delta_K(\lambda)]$ counts the number of (collective occurrences) of
prestandard modules $\wD(\lambda)$ in any prestandard filtration of $\gr^\flat\wN$, viewed as an
ungraded $\gr\wA$-module (a full lattice in $\gr\wN_K$).

(b) Assuming Hypothesis \ref{hypothesis1}, any PIM $\wP(\lambda)$, $\lambda\in\Lambda$, may be used for $\wN$ in Lemma \ref{mainlemma}. Here $\gr^\flat \wP(\lambda)=\gr\wP(\lambda)$ by Lemma \ref{projectivegradedlemma}. It has a
prestandard filtration by Lemma \ref{mainlemma}, and the proof of Lemma \ref{mainlemma} shows that the top term of the filtration may be taken to be $\gr^\flat\wDelta(\lambda)$. Also, the kernel of the map from
$\wP(\lambda)=\wP^\flat(\lambda)$ onto $\gr^\flat\Delta(\lambda)$ is filtered by prestandard modules $\wD(\mu)$, $\mu>\lambda$. This is also guaranteed by part (a) above of this remark.
\end{remark}

The following proposition is, in some sense, a corollary of the proof of Lemma \ref{mainlemma}, though additional
argument is required.

\begin{prop}\label{sense a corollary} Let $\wN$ be an $\wA$-lattice satisfying the hypothesis of
Lemma \ref{mainlemma} (which includes Hypotheses \ref{hypothesis1}). Assume also that $\gr\wDelta(\nu)$
has a simple head, for each $\nu\in\Lambda$ satisfying $[\wN_K:\wDelta_K(\nu)]\not=0$. Then
$\gr^\flat\wN=\gr\wN$ and all the prestandard modules in the filtration of Lemma \ref{mainlemma} are ``standard" (i.~e., if a shifted copy of $\wD(\nu)$ occurs in the filtration, then $\wD(\nu)\cong\gr\wDelta(\nu)$).
\end{prop}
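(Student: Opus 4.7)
The plan is to treat the two assertions of the proposition separately. The first---that each prestandard $\wD(\nu)$ appearing in the filtration from Lemma \ref{mainlemma} is isomorphic to $\gr\wDelta(\nu)$---is cheap: if $\wD(\nu)$ occurs as a section, then Remark \ref{bigremark}(a) gives $[\wN_K:\wDelta_K(\nu)]\neq 0$, so the hypothesis of the proposition applies to $\nu$, and Lemma \ref{equivalence} yields ${\gr^\flat}\wDelta(\nu)=\gr\wDelta(\nu)$. The defining sandwich ${\gr^\flat}\wDelta(\nu)\subseteq\wD(\nu)\subseteq\gr\wDelta(\nu)$ then collapses, giving $\wD(\nu)\cong\gr\wDelta(\nu)$.

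The substantive claim is ${\gr^\flat}\wN=\gr\wN$, and I would prove it by induction on the $\sO$-rank of $\wN$ (equivalently, on the length of a $\wDelta$-filtration), reusing the setup in the proof of Lemma \ref{mainlemma}. Pick a maximal weight $\lambda$ for $\wN$, let $\wM\subseteq\wN$ be the pure $\wA$-submodule that is a direct sum of copies of $\wDelta(\lambda)$ constructed there, and let $\wR=\wA\wM_{\lambda,m_1}$ with $m_1$ as in that proof. From that construction one already has the chain
\[
{\gr^\flat}\wDelta(\lambda)(m_1)\otimes_\sO\wM_{\lambda,m_1}\subseteq {\gr^\flat}\wN\cap\gr^\#\wR\subseteq\gr^\#\wR\cong\gr\wDelta(\lambda)(m_1)\otimes_\sO\wM_{\lambda,m_1}.
\]
Since $\wM\neq 0$ forces $[\wN_K:\wDelta_K(\lambda)]\neq 0$, the hypothesis of the proposition together with Lemma \ref{equivalence} makes the outer terms coincide, so $\gr^\#\wR\subseteq{\gr^\flat}\wN$.

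To close the induction I would observe that $\wN/\wR$ inherits all hypotheses of the proposition---its $\wDelta$-filtration, the $\gr\Delta_K$-filtration of $\gr(\wN_K/\wR_K)$, and the fact that $\gr\wDelta(\nu)$ has simple head whenever $[(\wN/\wR)_K:\wDelta_K(\nu)]\neq 0$ (which follows from the corresponding property for $\wN$, since multiplicities add across $\wDelta$-filtrations)---and has strictly smaller rank. By induction, ${\gr^\flat}(\wN/\wR)=\gr(\wN/\wR)$. Lemma \ref{surjective} then makes ${\gr^\flat}\wN\to\gr(\wN/\wR)$ surjective, so the composition $\gr\wN\to\gr(\wN/\wR)$ is surjective as well; combining this with the identification from the proof of Lemma \ref{mainlemma} that the kernel of $\gr\wN\to\gr(\wN/\wR)$ is exactly $\gr^\#\wR$, a Five Lemma comparison of
\[
0\to\gr^\#\wR\to{\gr^\flat}\wN\to\gr(\wN/\wR)\to 0 \quad\text{and}\quad 0\to\gr^\#\wR\to\gr\wN\to\gr(\wN/\wR)\to 0
\]
delivers ${\gr^\flat}\wN=\gr\wN$. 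The main obstacle will be upgrading the ${\gr^\flat}$-surjectivity of Lemma \ref{surjective} to $\gr$-surjectivity of $\gr\wN\to\gr(\wN/\wR)$; this is precisely where the inductive hypothesis on $\wN/\wR$ must be invoked, and it is what allows the Five Lemma to close the argument.
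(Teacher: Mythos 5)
Your proof is correct and follows essentially the same route as the paper: continue the setup from the proof of Lemma \ref{mainlemma}, use the simple-head hypothesis via Lemma \ref{equivalence} (and the multiplicity count of Remark \ref{bigremark}(a)) to collapse the sandwich $\gr^\flat\wDelta(\lambda)(m_1)\otimes\wM_{\lambda,m_1}\subseteq\gr^\flat\wN\cap\gr^\#\wR\subseteq\gr^\#\wR$, then induct on the rank of $\wN$ after passing to $\wN/\wR$, and finish by comparing kernels and images of the maps to $\gr(\wN/\wR)$. The paper compresses the final step into ``same image and kernel, so must be equal''; your explicit Five Lemma comparison and your note that multiplicities add across $\wDelta$-filtrations (to propagate the simple-head hypothesis to $\wN/\wR$) make the same argument, just spelled out a bit more carefully.
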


Before giving the proof, we record the following immediate corollary (of the proposition and Lemma
\ref{mainlemma}, using the Morita equivalence of Proposition \ref{Morita2}).

\begin{cor}\label{Nameless-1}Suppose that $\wA$ is a split QHA over $\sO$ with poset $\Lambda$, and that $\gr\wA_K$ is a
QHA algebra with the same poset $\Lambda$. Let $\wN$ be a $\wA$-lattice  with a $\wDelta$-filtration, and suppose $\gr\wDelta(\nu)$ has a simple head for each $\nu\in\Lambda$ satisfying $[\wN_K:\wDelta_K(\nu)]\not=0$. Then $\gr\wN$ has a filtration with sections $\gr\wDelta(\lambda)$, $\lambda\in\Lambda$.
\end{cor}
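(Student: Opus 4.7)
The plan is to reduce to a situation where Proposition \ref{sense a corollary} applies directly, then pull back the conclusion through a Morita equivalence. Since Proposition \ref{sense a corollary} and its underlying Lemma \ref{mainlemma} both require Hypothesis \ref{hypothesis1}(1)---that the ambient algebra be a TQHA---while here $\wA$ is only assumed to be a split QHA, I would begin by invoking Proposition \ref{Morita2} to obtain a TQHA $\wB$ over $\sO$ with poset $\Lambda$, together with a Morita equivalence $F$ from $\wAmod$ to the category of $\wB$-modules, accompanied by compatible Morita equivalences $\gr\wA \sim \gr\wB$ and $\gr\wA_K \sim \gr\wB_K$. In particular, because $\gr\wA_K$ is QHA with standards $\gr\wDelta(\lambda)_K$, the same holds for $\gr\wB_K$ under the Morita correspondence, with $\gr\wB_K$-standards given by $\gr\wDelta_\wB(\lambda)_K$; so Hypothesis \ref{hypothesis1}(2) transfers to $\wB$.

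Setting $\wN' := F(\wN)$, this $\wB$-lattice inherits a $\wDelta$-filtration by the corresponding standards $\wDelta_\wB(\lambda)$, and the simple-head hypothesis transports directly to the statement that $\gr\wDelta_\wB(\nu)$ has simple head for each $\nu$ with $[\wN'_K : \wDelta_\wB(\nu)_K] \neq 0$. The remaining hypothesis needed in order to invoke Lemma \ref{mainlemma} is that $\gr\wN'_K$ has a $\gr\Delta_K$-filtration as a graded $\gr\wB_K$-module. I would verify this by induction on the length of the $\wDelta$-filtration of $\wN'$: the base case $\wN' \cong \wDelta_\wB(\nu)$ is immediate since $\gr\wN'_K = \gr\wDelta_\wB(\nu)_K$ is a $\gr\wB_K$-standard module by Hypothesis \ref{hypothesis1}(2); the inductive step uses a short exact sequence $0 \to \wDelta_\wB(\lambda) \to \wN' \to \wN'' \to 0$ coming from a bottom term of the filtration, together with the observation that, under the simple-head hypothesis on $\gr\wDelta_\wB(\lambda)$, the induced map on $\gr$ over $K$ realizes $\gr\wDelta_\wB(\lambda)_K$ as a $\gr\wB_K$-standard submodule of $\gr\wN'_K$ whose quotient is $\gr\wN''_K$.

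With both hypotheses of Lemma \ref{mainlemma} in force, Proposition \ref{sense a corollary} yields ${\gr^\flat}_\wB\wN' = \gr_\wB\wN'$ together with a graded filtration whose sections are shifts of the $\gr\wB$-standards $\gr\wDelta_\wB(\lambda)$, $\lambda\in\Lambda$. Forgetting the grading shifts gives the required filtration of $\gr_\wB\wN'$ by copies of $\gr\wDelta_\wB(\lambda)$. Transferring back through the Morita equivalence $\gr\wB \sim \gr\wA$ of Proposition \ref{Morita2} then produces the desired filtration of $\gr\wN$ with sections $\gr\wDelta(\lambda)$. The main obstacle is the inductive verification of the $\gr\Delta_K$-filtration hypothesis on $\gr\wN'_K$, since it is there that compatibility of the functor $\gr$ over $K$ with short exact sequences of lattices admitting $\wDelta$-filtrations must be established---a compatibility made possible precisely by the simple-head hypothesis.
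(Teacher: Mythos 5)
Your overall strategy matches the paper's: invoke Proposition \ref{Morita2} to pass to a Morita-equivalent TQHA, apply Proposition \ref{sense a corollary} (which rests on Lemma \ref{mainlemma}), and transport back. Where you diverge is in attempting an explicit verification of the second hypothesis of Lemma \ref{mainlemma}, that $\gr\wN_K$ have a $\gr\Delta_K$-filtration as a graded $\gr\wA_K$-module. You are right that this hypothesis is genuinely needed and is not explicitly restated in the corollary; the paper treats the corollary as ``immediate'' without spelling this out.

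However, your proposed inductive verification has a gap. The inductive step asserts that, \emph{because} $\gr\wDelta(\lambda)$ has a simple head, the short exact sequence $0 \to \wDelta(\lambda) \to \wN' \to \wN'' \to 0$ remains well-behaved under $\gr$ over $K$, so that $\gr\wDelta(\lambda)_K$ sits inside $\gr\wN'_K$ with quotient $\gr\wN''_K$ (and with the radical filtration of $\Delta_K(\lambda)$ recovered up to shift). But the simple-head hypothesis is an $\sO$-level (equivalently, residue-field) condition on $\gr\wDelta(\lambda)$; by Lemma \ref{equivalence} it is equivalent to $\gr^\flat\wDelta(\lambda)=\gr\wDelta(\lambda)$. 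It carries no new information over $K$: under Hypothesis \ref{hypothesis1}(2), $\gr\Delta_K(\lambda)$ is already a standard module for the QHA $\gr\wA_K$ and therefore always has simple head over $K$, regardless of what happens over $\sO$. The question of whether the filtration $\Delta_K(\lambda) \cap \rad^n\wN'_K$ is (up to shift) the radical filtration of $\Delta_K(\lambda)$, and whether $\gr\wN'_K$ is thereby built from shifts of $\gr\Delta_K$'s, is a purely $K$-linear statement about $\wA_K$-modules and cannot be deduced from a hypothesis about the $\sO$-form. Indeed, in the proof of Lemma \ref{mainlemma} the paper uses precisely the assumption that $\gr\wN_K$ has a $\gr\Delta_K$-filtration to force $\gr^\#\wR_K$ to coincide with a sum of shifted $\gr\Delta_K(\lambda)$'s; your induction attempts to prove that assumption and cannot also invoke it. What you would actually need is a $K$-level argument (for instance, one establishing that the generation of $D=\wA_K\wN_{K,\lambda}$ by the $\lambda$-weight space already forces the induced filtration $D\cap\rad^n\wN_K$ to be generated from the graded $\lambda$-weight space, using projectivity of $\gr\Delta_K(\lambda)$ in the relevant truncation of $\gr\wA_K$), or else an explicit acknowledgment that this hypothesis is to be carried along as part of the corollary's assumptions. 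As it stands, the appeal to the simple-head hypothesis does not close the gap.
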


\begin{proof}[Proof of Propostion \ref{sense a corollary}] According to Lemma
\ref{projectivegradedlemma}, the head of a given $\gr\wDelta(\lambda)$ guarantees that $\gr^\flat\wDelta(\lambda)=\gr\wDelta(\lambda)$. Since any
 module $\wD(\lambda)$ is sandwiched between $\gr^\flat\wDelta(\lambda)$ and $\gr\wDelta(\lambda)$, we must have $\wD(\lambda)=\gr\wDelta(\lambda)$.

We now continue with the notation and proof of Lemma \ref{mainlemma}. The displayed inclusions
in the last paragraph are now equalities, and so $\gr^\flat \wN\cap \gr^\#\wR=\gr^\#\wR$. That is,
$\gr^\#\wR\subseteq\gr^\flat\wN$.

We recall that $\gr^\#\wR$ is the kernel of the map $\gr\wN\to\gr\wN/\wR$. This map sends $\gr^\flat\wN$
onto $\gr^\flat\wN/\wR$, according to the paragraph quoted above. The paragraph before that, in the proof
of Lemma \ref{mainlemma}, notes that $\wN/\wR$ satisfies the hypotheses of the lemma required on
$\wN$. In particular, $\wN/\wR$  has a $\wDelta$-filtration. This is part of a $\wDelta$-filtration of $\wN$,
since $\wR$ is a direct sum of standard modules by construction (see its introduction earlier in the proof of
the lemma).

It follows that $\wN/\wR$ satisfies the hypothesis of the current proposition, and we can assume inductively
(using induction on the rank of $\wN$) that $\gr^\flat\wN/\wR=\gr\wN/\wR$. But now $\gr^\flat\wN$ and
$\gr\wN$ have the same image and kernel, so must be equal.
This proves the proposition. 
\end{proof}

As a further corollary of Lemma \ref{mainlemma}, Proposition \ref{sense a corollary}, and their proofs,
we have

\begin{cor}\label{Nameless} Let $\wA,\wN,\Lambda$ satisfy the hypotheses of the preceding corollary, and let $\Gamma\subseteq \Lambda$ be a poset ideal. Then $\gr\wN_\Gamma\cong(\gr\wN)_\Gamma$ as $\gr\wA$-lattices. In particular, the natural map $\gr\wN\to\gr\wN_\Gamma$ is surjective.\end{cor}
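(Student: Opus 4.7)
The plan is to induct on $|\Lambda\setminus\Gamma|$, with the trivial base case $\Gamma=\Lambda$. For the inductive step I would pick $\mu$ maximal in $\Lambda\setminus\Gamma$; since $\Gamma$ is downward closed, $\mu$ is also maximal in $\Lambda$, and $\Lambda':=\Lambda\setminus\{\mu\}$ is a poset ideal containing $\Gamma$. I will first settle the ``one-step'' case $\Gamma=\Lambda'$, and then combine this with the induction hypothesis applied to $\wN_{\Lambda'}$ as an $\wA_{\Lambda'}$-lattice. The transfer of hypotheses to $\wN_{\Lambda'}$ is routine: it has a $\wDelta$-filtration by standard QHA facts; each $\gr\wDelta(\nu)$ is independent of whether it is computed over $\wA$ or its quasi-hereditary quotient $\wA_{\Lambda'}$ (the radical of $\wDelta_K(\nu)$ depends only on its $L_K(\nu)$-head); and $\gr\wA_{K,\Lambda'}=(\gr\wA_K)_{\Lambda'}$ inherits its QHA structure with poset $\Lambda'$.

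For the one-step case, let $\wR=\wA\wN_\mu$. Maximality of $\mu$ forces $(\rad\wN_K)_\mu=0$, since the $\mu$-weight space of $\wN_K$ is accounted for entirely by the $\wDelta_K(\mu)=L_K(\mu)$-sections of $\wN_K$, which contribute only to the head. In particular $\wN_\mu\cap\widetilde\rad\wN=0$, so $\wN_\mu$ sits in grade $0$ of $\gr\wN$, and Lemma~\ref{generation} gives $\wR\cong\wDelta(\mu)\otimes_\sO\wN_\mu$---exactly the submodule $\wR$ of the proof of Lemma~\ref{mainlemma} with $m_1=0$. The conditions of Lemma~\ref{surjective} are immediate for this $\wR$: $\wR\subseteq\wA_K\wN_\mu$ by construction, and the purity conditions hold trivially because $\wR_\mu=\wN_\mu$ is the whole $\mu$-weight space and $(\rad\wN_K)_\mu=0$. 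So under the standing simple-head hypothesis, Proposition~\ref{sense a corollary} together with the argument of Lemma~\ref{mainlemma} yields the short exact sequence
\[
0\to\gr^\#\wR\to\gr\wN\to\gr(\wN/\wR)\to 0,
\]
with $\gr^\#\wR\cong\gr\wR$ as a graded $\gr\wA$-submodule of $\gr\wN$.

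I then identify $\wN/\wR=\wN_\Gamma$ (because $\sum_{\nu\notin\Gamma}\wA e_\nu\wN=\wA\wN_\mu=\wR$) and $(\gr\wN)_\Gamma=\gr\wN/\gr^\#\wR$ (because $(\gr\wN)_\mu=\wN_\mu$ is concentrated in grade $0$, and the $\gr\wA$-submodule of $\gr\wN$ it generates is precisely $\gr^\#\wR$). The short exact sequence then delivers the isomorphism $(\gr\wN)_\Gamma\xrightarrow{\sim}\gr\wN_\Gamma$ compatibly with the natural map $\gr\wN\to\gr\wN_\Gamma$, and the ``in particular'' surjectivity claim follows by composing with the obvious surjection $\gr\wN\twoheadrightarrow(\gr\wN)_\Gamma$. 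For the general inductive step, the one-step isomorphism $(\gr\wN)_{\Lambda'}\cong\gr\wN_{\Lambda'}$ combines with the induction hypothesis $\gr(\wN_{\Lambda'})_\Gamma\cong(\gr\wN_{\Lambda'})_\Gamma$, and the trivial identifications $(\wN_{\Lambda'})_\Gamma=\wN_\Gamma$ and $((\gr\wN)_{\Lambda'})_\Gamma=(\gr\wN)_\Gamma$ complete the proof. The hard part will be the bookkeeping in the one-step case: identifying $\gr^\#\wR$ as the kernel of $\gr\wN\to(\gr\wN)_\Gamma$ and checking the conditions of Lemma~\ref{surjective}. Once these are in place, the short exact sequence handles the one-step case and the inductive iteration is formal.
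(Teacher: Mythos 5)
Your proof is correct, but it takes a genuinely different inductive route from the paper's. The paper keeps $\wA$, $\Lambda$, and $\Gamma$ fixed and inducts on the $\sO$-rank of $\wN$: it chooses a weight $\lambda\in\Lambda\setminus\Gamma$ maximal among those appearing in the $\wDelta$-filtration of $\wN$ (if none exists, $\wN_\Gamma=\wN$ and there is nothing to prove; the printed ``choosing $\lambda$ to belong to $\Gamma$'' appears to be a typo for its complement), forms $\wR$ as in Lemma~\ref{mainlemma}, applies the induction hypothesis to $\wN/\wR$ with the \emph{same} $\wA$ and $\Gamma$, and glues using the surjectivity of $\gr\wN\to\gr(\wN/\wR)$. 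You instead induct on $|\Lambda\setminus\Gamma|$, peel off one maximal weight $\mu$ to reach $\Lambda'=\Lambda\setminus\{\mu\}$, and bootstrap through the quotient data $(\wA_{\Lambda'},\wN_{\Lambda'})$. Both arguments run on the same engine (Lemma~\ref{surjective}, Lemma~\ref{mainlemma}, Proposition~\ref{sense a corollary}), but the paper's induction is somewhat lighter because the standing hypotheses never need to be re-verified. Your ``routine'' transfer of hypotheses hides two points that should be made explicit: (i) that $\gr\wA_{K,\Lambda'}\cong(\gr\wA_K)_{\Lambda'}$ is a QHA with poset $\Lambda'$ and standard modules $\gr\Delta_K(\nu)$ is precisely the field analogue of the corollary being proved (it is the content of Appendix~I, or can be obtained by running your one-step argument over $K$ first); and (ii) that the simple-head hypothesis on $\gr\wDelta(\nu)$ as a $\gr\wA$-module passes to $\gr\wA_{\Lambda'}$-modules — this holds because the $\gr\wA$-action on $\gr\wDelta(\nu)$ factors through the image of $\gr\wA$ in $\gr\wA_{\Lambda'}$, so $\gr\wA$-generation by grade $0$ forces $\gr\wA_{\Lambda'}$-generation. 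With these supplied, your one-step analysis (the identification $\wR=\wA\wN_\mu$, the vanishing $(\rad\wN_K)_\mu=0$, the verification of Lemma~\ref{surjective}'s conditions, and the identification $\gr^\#\wR=(\gr\wA)(\gr\wN)_\mu$ as the kernel of $\gr\wN\to(\gr\wN)_\Gamma$) is carried out correctly, and the iteration is formal.
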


\begin{proof} Again, we may assume the hypothesis of Proposition \ref{sense a corollary}, using Morita
equivalence. Now, we simply continue the discussion, in the proof of the latter result,  choosing
$\lambda$ to belong to $\Gamma$. (If $\Gamma=\emptyset$, there is nothing to prove.) Proceeding by
induction on the rank of $\wN$, it can be assumed that
$$\gr(\wN/\wR)_\Gamma\cong(\gr\wN/\wR)_\Gamma.$$

Since $\wR$ is a direct sum of copies of $\wDelta(\lambda)$, it is contained in the kernel of the surjection
$\wN\to \wN_\Gamma$. Thus, $(\wN/\wR)_\Gamma\cong \wN_\Gamma$, and so $\gr(\wN/\wR)_\Gamma
\cong\gr\wN_\Gamma$. Also, since $\gr^\#\wR$ is a direct sum of shifted copies of $\gr\wDelta(\lambda)$, each of which has head which is a shift of $L(\lambda)$,  $\gr^\#\wR$ is contained in the kernel
of the surjection $\gr\wN\to(\gr\wN)_\Gamma$. Of course, $\gr^\#\wR$ is the kernel of the map
$\gr\wN\to\gr\wN/\wR$. The latter map is surjective, as Lemma \ref{mainlemma} and Proposition \ref{sense a corollary} (and their proofs) show. Thus, $(\gr\wN)_\Gamma\cong(\gr\wN/\wR)_\Gamma$. The corollary now
follows.\end{proof}

An analogue of Corollary \ref{Nameless} holds for QHAs over fields (without any integral hypothesis or
conclusion). See Appendix I.

We are now ready to establish a main theorem of this paper.

\begin{theorem}\label{mainIntGrthm} Suppose that $\wA$ is a split QHA over $\sO$ with poset $\Lambda$,
and $\gr\wA_K$ is a QHA over $K$ with the same poset $\Lambda$.
Further, suppose that $\gr\wDelta(\lambda)$ has a simple head,
for all $\lambda\in\Lambda$. Then $\gr\wA$ is a split QHA over $\sO$ with poset $\Lambda$ and standard objects $\gr\wDelta(\lambda)$,
$\lambda\in\Lambda$. \end{theorem}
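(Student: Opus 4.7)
The plan is to reduce to the TQHA case via Morita equivalence and then construct a heredity chain for $\gr\wA$ by induction on $|\Lambda|$, identifying successive layers with the aid of Corollaries \ref{Nameless-1} and \ref{Nameless}. First, by Proposition \ref{Morita2}, I replace $\wA$ by a Morita-equivalent TQHA; this preserves Hypothesis \ref{hypothesis1}(2) and the simple-head condition, since the Morita equivalence intertwines with both $\gr$ and $K\otimes_\sO -$. So henceforth Hypothesis \ref{hypothesis1} is in force. Applying Corollary \ref{Nameless-1} to $\wN = \wA$ (a projective left $\wA$-module carrying the tautological $\wDelta$-filtration coming from the heredity chain of $\wA$) yields a $\gr\wDelta$-filtration of $\gr\wA$ as a left $\gr\wA$-module. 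By the refined ordering in Lemma \ref{mainlemma}, we may arrange this filtration so that sections of maximal weight appear at the bottom.

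The induction on $|\Lambda|$ has trivial base case. For the inductive step, choose $\lambda \in \Lambda$ maximal and set $\Gamma = \Lambda \setminus \{\lambda\}$, a poset ideal. By Corollary \ref{Nameless} with $\wN = \wA$, the natural map $\gr\wA \to \gr(\wA_\Gamma)$ is surjective, with kernel $\wJ := (\gr\wA) e_\lambda (\gr\wA)$. The hypotheses of the theorem transfer to $\wA_\Gamma$: it is a split QHA with poset $\Gamma$; the algebra $\gr(\wA_\Gamma)_K \cong (\gr\wA_K)_\Gamma$ is QHA over $K$ with poset $\Gamma$ (via the $K$-analog of Corollary \ref{Nameless} in Appendix I, together with standard QHA-quotient formation); and for $\mu \in \Gamma$ the module $\wDelta(\mu)$ is already an $\wA_\Gamma$-module (since $\Gamma$ is a poset ideal and $\wDelta(\mu)$ has composition factors indexed by weights $\leq \mu$), so $\gr\wDelta(\mu)$ has simple head. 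By induction, $\gr\wA/\wJ \cong \gr(\wA_\Gamma)$ is a split QHA over $\sO$ with poset $\Gamma$ and standard objects $\gr\wDelta(\mu)$, $\mu \in \Gamma$.

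It remains to verify that $\wJ$ is a split heredity ideal in $\gr\wA$. Condition (i), that $\gr\wA/\wJ$ is $\sO$-free, is immediate since $\gr\wA/\wJ \cong \gr(\wA_\Gamma)$ is the $\gr$ of a lattice. Condition (ii), $\wJ^2 = \wJ$, is standard from $e_\lambda \in e_\lambda(\gr\wA)e_\lambda$. For (iii) and (iv), maximality of $\lambda$ gives $\wA e_\lambda = \wP(\lambda) = \wDelta(\lambda)$ with $e_\lambda \wA e_\lambda = \End_\wA(\wDelta(\lambda)) = \sO$ (the $\lambda$-weight space is $\sO$ of rank one by the TQHA property and split QHA structure). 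Passing to $\gr$ yields $\gr\wA\, e_\lambda = \gr\wDelta(\lambda)$ and $e_\lambda(\gr\wA)e_\lambda = \sO$. The multiplication map then identifies
\[
\wJ \;\cong\; \gr\wDelta(\lambda) \otimes_\sO e_\lambda(\gr\wA)
\]
as a left $\gr\wA$-module; since $e_\lambda(\gr\wA)$ is $\sO$-free of some rank $n$ (being an $\sO$-pure summand of $\gr\wA$), $\wJ$ is a direct sum of $n$ copies of the projective $\gr\wP(\lambda) = \gr\wDelta(\lambda)$, proving (iii). And $\End_{\gr\wA}(\wJ) \cong M_n(\End_{\gr\wA}(\gr\wDelta(\lambda))) = M_n(\sO)$, giving the split condition (iv). Combining with the inductively-produced heredity chain of $\gr\wA/\wJ$ yields a heredity chain for $\gr\wA$ with standard modules $\gr\wDelta(\lambda)$.

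The main obstacle I anticipate is the clean transfer of hypotheses in the inductive step---in particular the identifications $\gr(\wA_\Gamma)_K \cong (\gr\wA_K)_\Gamma$ at the $K$-level (requiring the Appendix I analog of Corollary \ref{Nameless}) and $\gr_\wA\wDelta(\mu) = \gr_{\wA_\Gamma}\wDelta(\mu)$ with matching $\gr\wA_\Gamma$-module structure (through the surjection $\gr\wA \to \gr\wA_\Gamma$). A secondary subtlety is the verification that $e_\lambda(\gr\wA)$ is genuinely $\sO$-free and that the multiplication map becomes an isomorphism after tensoring over $\sO$, which in the split TQHA case reduces to the analogous well-known statement for the integral QHA $\wA$ itself, and is compatible with $\gr$ because of the flatness of the filtration steps.
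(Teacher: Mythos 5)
Your proof is correct and reaches the same conclusion, but it organizes the argument differently from the paper. The paper passes to a TQHA, builds a single $\gr\wDelta$-filtration of $\gr\wA$ from Proposition \ref{sense a corollary}, coalesces contiguous sections with the same weight into candidate ideals $\wJ_i$, and then verifies directly that each $\wJ_i$ is a two-sided ideal (via an $\Ext$-vanishing argument over $K$), idempotent, and projective with split matrix endomorphism ring. You instead induct on $|\Lambda|$, peeling off one maximal weight $\lambda$ at a time, and use Corollary \ref{Nameless} (with $\wN=\wA$) to identify $\gr(\wA_\Gamma)$ with $(\gr\wA)_\Gamma=(\gr\wA)/(\gr\wA)e_\lambda(\gr\wA)$; this makes the two-sidedness of the heredity ideal automatic from its definition as $(\gr\wA)e_\lambda(\gr\wA)$, and the hypothesis transfer to $\wA_\Gamma$ is handled by the Appendix I corollary (for the $K$-level QHA statement) together with Remark \ref{bigremark2}(b) (for compatibility of $\gr^\flat$ and the simple-head condition across the surjection $\gr\wA\twoheadrightarrow\gr\wA_\Gamma$).

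The one place where your write-up is terse but still correct is the assertion that the kernel of $\gr\wA\to\gr(\wA_\Gamma)$ is exactly $(\gr\wA)e_\lambda(\gr\wA)$: this is not an independent observation but follows directly from Corollary \ref{Nameless}, since $(\gr\wA)_\Gamma$ is \emph{defined} as the quotient of $\gr\wA$ by $\sum_{\nu\notin\Gamma}(\gr\wA)e_\nu(\gr\wA)=(\gr\wA)e_\lambda(\gr\wA)$, and the natural map $\gr\wA\to\gr(\wA_\Gamma)$ factors through $(\gr\wA)_\Gamma$ because $e_\lambda$ annihilates $\gr(\wA_\Gamma)$; comparing ranks of $\sO$-lattices shows the factorization is an isomorphism. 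Your argument for $e_\lambda(\gr\wA)e_\lambda\cong\sO$ and for the injectivity of the multiplication map $(\gr\wA)e_\lambda\otimes_\sO e_\lambda(\gr\wA)\to\wJ$ (inject over $K$, domain torsion-free) is sound. Both approaches ultimately rest on the same machinery (Morita reduction, Proposition \ref{sense a corollary}/Corollary \ref{Nameless-1}, Corollary \ref{Nameless}, Appendix I); yours is arguably more modular, while the paper's produces the whole heredity chain in one pass and makes the role of the $\Ext^1$-vanishing (\ref{bigExtvanishing}) explicit, which is then reused in the following section.
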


\begin{proof} Using Proposition \ref{Morita2} to pass to a Morita equivalent algebra, we can assume that Hypothesis \ref{hypothesis1}
holds.  Let $\wP$ be any $\sO$-finite projective $\wA$-module. Then $\wN:=\wP$ satisfies the hypotheses of Proposition \ref{sense a corollary}.
 Now apply Remark \ref{bigremark}(b) to $\wP=\wP(\lambda)$, for any given $\lambda\in\Lambda$. This provides
an exact sequence $0\to\Omega\to\gr\wP(\lambda)\to\gr\wDelta(\lambda)\to 0$ in which $\Omega$ has a filtration with sections $\gr\wDelta(\nu)$, $\nu>\lambda$.
In particular,
$$
\left\{\!\!\!\begin{array}{rcl}
\Hom_{\gr\wA}(\Omega,L(\mu)) & = & 0,\\[2mm]
\Hom_{\gr\wA}(\Omega,\gr\wDelta(\mu)) & = & 0,
\end{array}\right.
$$
 unless $\mu>\lambda$. Thus
 \begin{equation}\label{bigExtvanishing}
\left\{\!\!\!\begin{array}{rcl}
\Ext^1_{\gr\wA}(\gr\wDelta(\lambda),L(\mu)) & = & 0,\\[2mm]
 \Ext^1_{\gr\wA}(\gr\wDelta(\lambda),\gr\wDelta(\mu)) & =& 0,
\end{array}\right.
 \end{equation}
  unless $\lambda<\mu$ ($\lambda,\mu\in\Lambda$).
In particular, if $\wM$ is a $\gr\wA$-module such that the composition factors  $L(\mu)$ of $\overline {\wM}$ satisfy $\nu\not>\lambda$, then
$\Ext^1_{\gr\wA}(\gr\wDelta(\lambda),\wM)=0$.\footnote{There are many ways to prove this. First, note the hypothesis on $\overline{\wM}=\wM/\pi \wM$ holds for any
of its homomorphic images $\pi^{a-1}\wM/\pi^a\wM$, with $a>0$, and, thus, on $\wM/\pi^a\wM$. The required vanishing holds if $\wM$ is replaced by $
\wM/\pi^a \wM$, so it
suffices to show it holds when $\wM$ is replaced by $\pi^a\wM$. For large enough $a$, the latter module is torsion free. So we may assume to start that $\wM$ is
torsion free. All composition factors $\wL_K(\mu)$ of $\wM_K$ satisfy $\mu\not >\lambda$ since the analogous property over $k$ holds. Thus, $\Ext^1_{\gr\wA_K}(\gr\Delta_K(\lambda),M_K)=0$, since
$\gr\wA_K$ is a QHA with poset $\Lambda$. Thus, $\Ext^1_{\gr\wA}(\gr\wDelta(\lambda),\wM)$ is torsion free. However, multiplication by $\pi$ on this $\Ext^1$-group
is surjective, since $\Ext^1_{\gr\wA}(\gr\wDelta(\lambda),\overline{\wM})=0$. So the $\Ext^1$-group itself vanishes.   }

  Next, take $\wP=\wA$, viewed as a left module
over itself in Proposition \ref{sense a corollary}. We can rearrange the (ungraded version of the) filtration guaranteed by
Proposition \ref{sense a corollary} so that all copies of a given $\gr\wDelta(\lambda)$
occur contiguously. Removing all redundant filtration indices $j$ and those for which $\wF_{j+1}/\wF_j$ and $\wF_j/\wF_{j-1}$ are isomorphic to the same
module $\gr\wDelta(\lambda)$, gives  a new filtration
$$0\subseteq\wF_{j_1}\subseteq\wF_{j_2}\subseteq\cdots\subseteq\wF_{j_s}=\gr\wA,
$$
with $1=j_1<j_2<\cdots<j_s=r$ and $\wF_{j_{i+1}}/\wF_{j_i}\cong\wDelta(\lambda_i)^{\oplus m_i}$, $i=1,\cdots, s$, for
suitable elements $\lambda_i\in\Lambda$ and positive integers $m_i$. We may assume
that $\lambda_i\not=\lambda_{i'}$ if $i\not= i'$, and also that $\lambda_i\not <\lambda_{i'}$ if $i<i'$.

Now take $\wJ_i=\wF_{j_i}$. By definition, $\wJ_i$ is a left ideal in $\gr\wA$. On the other hand, let $x\in\gr\wA$. We claim that  $\wJ_ix\subseteq \wJ_i$.
Right multiplication by $x$ defines an endomorphism $\gr\wA\to\gr\wA$, and it suffices to show that the induced map $\wJ_i\overset{x}\to\gr\wA\to(\gr\wA)/\wJ_i$ is
the zero map. If this map is nonzero, tensoring with $K$ defines a nonzero map $\wJ_{iK}\to (\gr\wA_K)/\wJ_{i K}$. However, $\wJ_{iK}$ is filtered
by standard modules $\gr\Delta_K(\lambda_j)$, $j=1,\cdots,s'$ for some non-negative integer $s'=s'_i$, while $(\gr\wA_K)/\wJ_{iK}$ is filtered by modules $\gr\Delta_K(\mu)$ with no
$\mu$ greater than, or equal to, any $\lambda_j$. Hence, $\Hom_{\gr\wA_K}(\wJ_{iK},(\gr\wA_K)/\wJ_{iK})=0$, a contradiction. Therefore, $\wJ_ix\subseteq \wJ_i$ as claimed, and $\wJ_i$ is an ideal in $\gr\wA$.

Next, for $\lambda\in\Lambda$, $\End_{\gr\wA}(\gr\wDelta(\lambda))$ is a finite $\sO$-subalgebra of
$$\End_{\gr \wA}(\gr\wDelta(\lambda))_K=\End_{\gr\wA_K}(\gr\Delta_K(\lambda))\cong K,$$
so $\End_{\gr\wA}(\wDelta(\lambda))\cong\sO$. Thus,
 $$\End_{\gr\wA}(\wJ_i/\wJ_{i-1})\cong\End_{\gr\wA}(\wDelta(\lambda_i)^{\oplus m_i})\cong M_{m_i}(\sO).$$

Fix $i$, and put $\Gamma_i=\{\gamma\in\Lambda\,|\,\gamma\leq\lambda_{i'}, {\text{\rm for some}}\,i'>i\}$. Then $\wJ_{ik}\subseteq B:=(\gr\wA)_k$ and $B/\wJ_{ik}$ is the largest quotient algebra of $B$ whose composition
factors $L(\nu)$ satisfy $\nu\in\Gamma_i$. Since $\wJ_{ik}/\wJ_{ik}^2$ is a $B/\wJ_{ik}$-module, its composition factors $L(\gamma)$ satisfy $\gamma\in\Gamma_i$.
On the other hand,  $\wJ_{ik}/\wJ_{ik}^2$ is a homomorphic image of $\wJ_{ik}$, which is filtered with sections $\gr\wDelta(\lambda)$, $\lambda\not\in\Gamma_i$, each of which has head $L(\lambda)$. So, if
$\wJ_{ik}/\wJ_{ik}^2\not=0$, it must have a composition factor $L(\lambda)$, $\lambda\not\in\Gamma_i$. Therefore, $\wJ_{ik}^2=\wJ_{ik}$, so $\wJ_i=\wJ_i^2$ by
Nakayama's lemma.

Finally, to show $\wJ_i/\wJ_{i-1}$ is $(\gr\wA)/\wJ_{i-1}$-projective, it suffices to show that $\gr\wDelta(\lambda_i)$ is $(\gr\wA)/\wJ_{i-1}$-projective. But this
follows from the sentence immediately following (\ref{bigExtvanishing}).

 In conclusion, $\gr\wA$ is a split
QHA over $\sO$, as claimed. This construction also shows that $\gr\wDelta(\lambda)$ is standard.
\end{proof}

 \begin{remark}\label{bigremark2} (a) We could give an alternative version of part of the above proof
 by appealing to Corollary \ref{Nameless}. In particular, we note, for $\wA$ and $\Lambda$ satisfying the hypothesis of Theorem \ref{mainIntGrthm}, and any poset ideal $\Gamma\subseteq\Lambda$, that
 $$ \gr(\wA_\Gamma)\cong (\gr \wA)_\Gamma.$$

 (b) Consequently, the map $\gr\wA\to\gr\wA_\Gamma$ is surjective. So, if $\wN$ is an $\wA_\Gamma$-lattice,
 with $\wA$ and $\Gamma$ as above, there is a physical equality of $\sO$-modules
 $$\gr^\flat_\wA\wN=\gr^\flat_{ \wA_\Gamma}\wN.$$

 (c) In a similar spirit, another consequence of Corollary \ref{Nameless}: If $\wA,\Lambda,\wN$ satisfy the
 hypothesis of the corollary, {\it then the strongly primitive elements of $\gr\wN$ are the same as the primitive elements of $\gr\wN$.} This follows by taking $\Gamma=\{\gamma\,|\,\gamma\not>\lambda\}$ for
 any given $\lambda\in\Lambda$, and using the isomorphism
 $$\gr\wN/\wN\cap \wN'_K(\lambda)=\gr\wN_\Gamma\cong(\gr\wN)_\Gamma
\cong\gr \wN/\gr\wN\cap (\gr\wN)'_K(\lambda).$$
 \end{remark}

\section{A special case}  In this section, unless otherwise noted, we continue the notation and assume both the conditions of Hypotheses \ref{hypothesis1}
of the previous section. In particular, the algebra $\wA$ is a split QHA
with standard objects $\wDelta(\lambda)$, $\lambda\in\Lambda$. The irreducible modules  and PIMS of the QHA $\wA_K$ are denoted $L_K(\lambda)$ and $P_K(\lambda)$, respectively. We sometimes write
$\Delta_K(\lambda)$ (which is isomorphic to $\wDelta(\lambda)_K$) for its standard modules. The
algebra $\gr \wA_K$
 is also a QHA under Hypothesis \ref{hypothesis1}, specifically item (2). It has standard objects $(\gr\wDelta(\lambda))_K\cong
\gr\wDelta(\lambda)_K=\gr\Delta_K(\lambda)$, $\lambda\in\Lambda$.

In addition, we assume, for the rest of this section,  that $\wA$ has a pure subalgebra $\widetilde\fa$ and a Wedderburn complement $\wA_{K,0}$ of
$\wA_K$. For use in the results below, we record the following conditons.

\medskip
\begin{conditions}\label{conditions}   \begin{itemize}
\item[(1)] $\wfa_K$ has a tight grading $\wfa_K=\wfa_{K,0}\oplus\wfa_{K,1}\oplus\cdots $\footnote{This means that $\wfa_K$ is positively graded,
$\wfa_K=\bigoplus_{n\geq 0}\wfa_{K,n}$ with $\wfa_{K,0}$ semisimple and with $\wfa_K$ generated by
$\wfa_{K,1}$; see \cite[\S4]{CPS1a}. Equivalently, $\wfa_K\cong \gr\wfa_K$.}.

\item[(2)] $
\rad\wA_K=(\rad\widetilde\fa_K)\wA_K=\wA_K(\rad\widetilde\fa_K).$

\item[(3)]
For $\lambda\in\Lambda$, $\Delta_K(\lambda)$ has a graded $\widetilde\fa_K$-structure, and is generated
as an $\wfa_K$-module by
 $\Delta_K(\lambda)_0$.

\item[(4)] In (3),  $\Delta_K(\lambda)_0$ is $\wA_{K,0}$-stable. Also, $\wA_{K,0}$ contains  $\wfa_{K,0}$ (defined in (1)) and all idempotents $e_\lambda, \lambda\in\Lambda$.

\item[(5)] $\wfa$ has a positive grading $\wfa=\oplus_{r\geq 0}\wfa_r$  such that $K{\wfa}_r=\wfa_{K,r}$, the $r$th grade of $\wfa_K$ in (1), for each $r\in\mathbb N$.
\end{itemize}
\end{conditions}

Observe that Conditions (\ref{conditions})(1) \& (5) can be made independently of the QHA algebra
$\wA$. In this spirit, we have the following result.

\begin{prop}\label{three part prop} Let $\wfa$ be an algebra which is free and finite over the DVR $\sO$ and
satisfies  Conditions  \ref{conditions}(1) \& (5). Then the following statements hold:

(a) For each $r\in\mathbb N$, we have $\wfa_r=\wfa\cap\wfa_{K,r}$ and
$\sum_{i\geq r}\widetilde{\fa}_i=\widetilde\fa\cap\rad^r\widetilde\fa_K$. There is an isomorphism
$\widetilde\fa\overset\sim\to\gr\widetilde\fa$ of graded $\sO$-algebras sending $\x\in\widetilde{\fa}_r$ to
its image $[x]$ in $\widetilde\rad^r\wfa/\widetilde\rad^{r+1}\wfa =(\gr\widetilde{\fa})_r$.

(b) If $\wM$ is an $\widetilde\fa$-lattice, the following statements are equivalent:

\begin{itemize}

\item [(i)] $\wM$ is tight. (See Definition \ref{tightdef1}.)

\item[ (ii)] $\sum_{i\geq r}{\widetilde\fa}_i\wM=\widetilde{\rad}^r\wM$ for each $r\in\mathbb N$.

\item[ (iii)] The $\gr\widetilde\fa$-lattice $\gr\wM$ is generated by $(\gr\wM)_0$.
\end{itemize}
\end{prop}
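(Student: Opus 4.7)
The plan is to establish (a) first---the identifications between the given grading and the radical filtration---and then deduce (b) by a short nilpotence argument.

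For (a), the equality $\wfa_r=\wfa\cap\wfa_{K,r}$ follows from the fact that $\wfa_r$ is a direct $\sO$-summand of $\wfa$, hence $\sO$-pure, so $\wfa_r=\wfa\cap K\wfa_r=\wfa\cap\wfa_{K,r}$; alternatively, any $x\in\wfa$ written uniquely as $\sum x_i$ with $x_i\in\wfa_i$ has the same graded decomposition in $\wfa_K$, and $x\in\wfa_{K,r}$ forces $x_i=0$ for $i\neq r$. For the second equality, the key intermediate step is to show $\rad^r\wfa_K=\bigoplus_{i\geq r}\wfa_{K,i}$. The ideal $\bigoplus_{i\geq 1}\wfa_{K,i}$ is nilpotent (by finite-dimensionality and positivity of the grading) and $\wfa_K/\bigoplus_{i\geq 1}\wfa_{K,i}\cong\wfa_{K,0}$ is semisimple, so it equals $\rad\wfa_K$; taking $r$-th powers and using generation of $\wfa_K$ by $\wfa_{K,1}$ yields the identity for all $r$. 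Intersecting with $\wfa$ and applying the first equality degreewise gives $\wfa\cap\rad^r\wfa_K=\sum_{i\geq r}\wfa_i$. The graded algebra isomorphism $\wfa\to\gr\wfa$ is then the composition $\wfa_r\hookrightarrow\wrad^r\wfa\twoheadrightarrow\wrad^r\wfa/\wrad^{r+1}\wfa=(\gr\wfa)_r$, which is an $\sO$-module isomorphism in each grade by the identities just proved, and multiplicative since $\wfa_r\cdot\wfa_s\subseteq\wfa_{r+s}$.

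For (b), the equivalence (i) $\Leftrightarrow$ (ii) is immediate from (a), since tightness reads $\wrad^r\wM=(\wrad^r\wfa)\wM=\bigl(\sum_{i\geq r}\wfa_i\bigr)\wM=\sum_{i\geq r}\wfa_i\wM$. For (ii) $\Rightarrow$ (iii), reducing $\wrad^r\wM=\sum_{i\geq r}\wfa_i\wM$ modulo $\wrad^{r+1}\wM=\sum_{j\geq r+1}\wfa_j\wM$ shows that $(\gr\wM)_r$ is the image of $\wfa_r\wM$ and so coincides with $(\gr\wfa)_r(\gr\wM)_0$; summing over $r$ yields that $\gr\wM$ is generated by $(\gr\wM)_0$ over $\gr\wfa$. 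For (iii) $\Rightarrow$ (i), generation gives $\wrad^r\wM\subseteq(\wrad^r\wfa)\wM+\wrad^{r+1}\wM$ for every $r$; since $\rad\wfa_K$ is nilpotent, $\wrad^N\wM=0$ for $N$ large, and a descending induction on $r$ upgrades the inclusion to equality, yielding tightness.

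The main obstacle is the identification $\rad^r\wfa_K=\bigoplus_{i\geq r}\wfa_{K,i}$ in (a): this is the only place where the full force of the tight grading hypothesis on $\wfa_K$---both semisimplicity of the degree zero part and generation in degree one---is genuinely used. The remainder of the argument is direct-sum bookkeeping combined with the nilpotence that any finite-dimensional algebra filtration must eventually exhibit, and the proof of (b) is essentially formal once (a) is in hand.
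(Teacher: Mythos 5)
Your proposal is correct and follows essentially the same route as the paper: part (a) via $\sO$-direct-summand purity plus the identification $\rad^r\wfa_K=\bigoplus_{i\geq r}\wfa_{K,i}$ coming from the tight grading, and part (b) with (i)$\iff$(ii) immediate and the remaining implications handled by reducing modulo $\wrad^{r+1}\wM$ and a downward induction grounded in nilpotence. The only cosmetic difference is that you close the cycle via (iii)$\Rightarrow$(i) rather than (iii)$\Rightarrow$(ii), which given the established (i)$\iff$(ii) is the same argument.
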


\begin{proof} We begin with (a). Since $\widetilde\fa_r$ is an $\sO$-direct summand of $\widetilde\fa$, we have that
$\wfa_r=\wfa\cap\wfa_{r,K}=\wfa\cap\wfa_{K,r}$. Similarly, we get $\sum_{i\geq r}\wfa_i=\wfa\cap\rad^r\wfa_K$, since
$$ \rad^r\wfa_K=(\rad\wfa_K)^r=(\sum_{i\geq 1}\wfa_{K,i})^r=\sum_{i\geq r}\wfa_{K,i}=
(\sum_{i\geq r}\wfa_i)_K.$$

In particular, there is a well-defined $\sO$-linear graded map sending $x\in\wfa_r$ to its image $[x]\in
\wfa\cap\rad^r\wfa_K/\wfa\cap\rad^{r+1}\wfa_K$, and the map is clearly multiplicative. Thus, it is a graded homomorphism of $\sO$-algebras. The equations $\sum_{i\geq r}\wfa_i=\wfa\cap\rad^r\wfa_K$ and $\sum_{i\geq r+1}\wfa_i=\wfa\cap\rad^{r+1}\wfa_K$ show that the map is an isomorphism. Part (a) is now proved.

We now prove (b). The equivalence (i) $\iff$ (ii) is immediate from the sum formula for $\wfa\cap\rad^r\wfa_K$ in part (a).

Now suppose that (ii) holds. Then, using the isomorphism of (a),
$$(\gr\wfa)_r(\gr\wM)_0=[\wfa_r]\wM/(\wM\cap\rad\wM_K)=(\wfa_r\wM+ \wM\cap\rad^{r+1}\wM_K)/\wM\cap\rad^{r+i}\wM_K,$$
the second equality holding by definition of the action of $\gr\wfa$ on $\gr\wM$.  But (ii) gives that
$$\wfa_r\wM+\wM\cap\rad^{r+1}\wM_K=\wfa_r\wM +\sum_{i\geq r+1}\wfa_i\wM=\sum_{i\geq r}\wfa_i\wM=\wM\cap
\rad^r\wM_K.$$
Thus, $(\gr\wfa)_r(\gr\wM)_0=\gr\wM_r$, and (iii) holds.

If (iii) holds, then $(\gr\wfa)_r(\gr\wM)_0=\gr\wM_r$ for all $r\in\mathbb N$. This gives that
$$\wfa_r\wM+\wM\cap\rad^{r+1}\wM_K=\wM\cap\rad^r\wM_K$$
calculating with $(\gr\wfa)_r=[\wfa_r]$, as above. Downward induction on $r$ proves (ii). (Note that (ii) holds for
sufficiently large $r$, with both sides equal to 0.)  The proposition is proved.
\end{proof}

For $\lambda\in\Lambda$, define the $\wA_K$-submodule $E_K(\lambda)$ of $P_K(\lambda)$   by the
short exact sequence
\begin{equation}\label{sex} 0\to E_K(\lambda)\hookrightarrow P_K(\lambda)\to \Delta_K(\lambda)\to 0.
\end{equation}

\begin{theorem}\label{bigtheorem} Let $\wA$ be a QHA over $\sO$ with weight poset $\Lambda$, which satisfies Hypothesis \ref{hypothesis1}
and Conditions \ref{conditions}(1)---(5).
Let $\lambda\in\Lambda$ be such that the PIM $P_K(\lambda)$ in $\wA_K$-mod contains a full $\wA$-lattice $\wP(\lambda)^\dagger$ with the following properties:

(i) $\wP(\lambda)^\dagger=\wA v$, where $v\in \wP(\lambda)^\dagger_\lambda$;

(ii) $\wP(\lambda)^\dagger=\wP(\lambda)^\dagger_0\oplus \wP(\lambda)^\dagger\cap\rad P_K(\lambda)$, where $\wP(\lambda)^\dagger_0$ is an $\sO$-direct summand of $\wP(\lambda)^\dagger$ such that $K\wP(\lambda)^\dagger_0 + E_K(\lambda)$ is an $\wA_{K,0}$-submodule of
$K\wP(\lambda)^\dagger=P_K(\lambda)$;

(iii) $\wP(\lambda)^\dagger|_\wfa$ is tight.

Then $\wDelta(\lambda)|_\wfa$ is tight.
\end{theorem}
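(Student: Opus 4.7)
The plan is to identify $\wDelta(\lambda)$ with the quotient $\wP(\lambda)^\dagger/\wE$, where $\wE := \wP(\lambda)^\dagger \cap E_K(\lambda)$, and then to transfer the tightness of $\wP(\lambda)^\dagger|_\wfa$ from hypothesis (iii) to $\wDelta(\lambda)|_\wfa$. The identification comes from hypothesis (i): since $\wP(\lambda)^\dagger=\wA v$ with $v\in\wP(\lambda)^\dagger_\lambda$, the image of $\wP(\lambda)^\dagger$ under the surjection $P_K(\lambda)\twoheadrightarrow\Delta_K(\lambda)$ is generated by the image $\bar v$ of $v$, which spans the one-dimensional $\lambda$-weight space of $\wDelta(\lambda)$; by Lemma \ref{generation}, this image is all of $\wDelta(\lambda)$.

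By Proposition \ref{three part prop}(b), tightness of $\wDelta(\lambda)|_\wfa$ is equivalent to $\gr\wDelta(\lambda)$ being generated by $(\gr\wDelta(\lambda))_0$ as a $\gr\wfa$-module. Tightness of $\wP(\lambda)^\dagger|_\wfa$ already gives $\gr\wP(\lambda)^\dagger$ generated by $(\gr\wP(\lambda)^\dagger)_0$, so it suffices to show that the induced graded map $\gr\pi:\gr\wP(\lambda)^\dagger\to\gr\wDelta(\lambda)$, coming from $\pi:\wP(\lambda)^\dagger\to\wDelta(\lambda)$, is surjective. This amounts to proving, for all $r\geq 0$, the identity
$$
\wP(\lambda)^\dagger\cap(\rad^r P_K(\lambda)+E_K(\lambda)) \;=\; (\wP(\lambda)^\dagger\cap\rad^r P_K(\lambda))+\wE. \quad(\ast)
$$
The inclusion $\supseteq$ is immediate; the content is the reverse direction.

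For $r\leq 1$, $(\ast)$ is trivial. For $r\geq 2$, hypothesis (ii) gives a decomposition $\wP(\lambda)^\dagger=\wP(\lambda)^\dagger_0\oplus\wR$ with $\wR=\wP(\lambda)^\dagger\cap\rad P_K(\lambda)$. Since $\rad^r P_K(\lambda)+E_K(\lambda)\subseteq\rad P_K(\lambda)=K\wR$ for $r\geq 1$, any element of the left side of $(\ast)$ has zero $\wP(\lambda)^\dagger_0$-component, so one is reduced to working inside $\wR$: given $x\in\wR$ with $x=y+z$, $y\in\rad^r P_K(\lambda)$, $z\in E_K(\lambda)$, produce $y'\in\wrad^r\wP(\lambda)^\dagger$ and $z'\in\wE$ with $x=y'+z'$. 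The tightness of $\wP(\lambda)^\dagger|_\wfa$ together with Proposition \ref{three part prop} gives $\wrad^r\wP(\lambda)^\dagger=\sum_{s\geq r}\wfa_s\wP(\lambda)^\dagger$, while Condition \ref{conditions}(3) supplies the graded decomposition $\rad^r\Delta_K(\lambda)=\bigoplus_{s\geq r}\Delta_K(\lambda)_s$ of the $\wfa_K$-module $\Delta_K(\lambda)$. The $\wA_{K,0}$-stability of $K\wP(\lambda)^\dagger_0+E_K(\lambda)$ from hypothesis (ii), together with $\wfa_{K,0}\subseteq\wA_{K,0}$ (Condition \ref{conditions}(4)) and the semisimplicity of $\wfa_{K,0}$, is used to adjust a given lift of $x$ by an element of $\wE$ so that, working inductively on the graded degree starting from the lowest, the modified lift lies in $\wrad^r\wP(\lambda)^\dagger$.

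The main obstacle is this last degree-by-degree argument: one must reconcile the integral lattice structure of $\wP(\lambda)^\dagger$ with the $\wfa_K$-graded structure on $\Delta_K(\lambda)$, even though $P_K(\lambda)$ itself carries no natural grading. The $\wA_{K,0}$-stability hypothesis is precisely what controls the position of $E_K(\lambda)$ relative to the $\wfa$-radical filtration on $\wP(\lambda)^\dagger$, ensuring that $\wE$ does not obstruct strictness of $\pi$ and so that the generation of $\gr\wP(\lambda)^\dagger$ by its degree-zero part passes to $\gr\wDelta(\lambda)$.
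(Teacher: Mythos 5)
Your setup is sound: identifying $\wDelta(\lambda)$ with $\phi(\wP(\lambda)^\dagger)\cong\wP(\lambda)^\dagger/\wE$ via hypothesis (i) and Lemma \ref{generation}, and noting (through Proposition \ref{three part prop}(b) together with Condition \ref{conditions}(2), which makes the $\wfa$-radical filtration agree with the $\wA$-radical filtration) that tightness of $\wDelta(\lambda)|_\wfa$ amounts to surjectivity of the induced map $\gr\pi:\gr\wP(\lambda)^\dagger\to\gr\wDelta(\lambda)$, are both correct, as is the reduction to your strictness identity $(\ast)$. But the proof of $(\ast)$ for $r\geq 2$ is never actually given. You list the ingredients --- tightness giving $\wrad^r\wP(\lambda)^\dagger=\sum_{s\geq r}\wfa_s\wP(\lambda)^\dagger$, the graded $\wfa_K$-structure of $\Delta_K(\lambda)$, the $\wA_{K,0}$-stability in (ii) --- and then describe a ``degree-by-degree adjustment,'' but you yourself flag this as ``the main obstacle'' without supplying the argument. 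Since $P_K(\lambda)$ carries no grading, it is not even clear what the induction variable is, why the adjustments stay inside $\wE$, or why the process terminates. That is a genuine gap, not a routine step.

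The paper closes this gap by a different and shorter route that you would do well to import. Hypothesis (ii) says $\wP(\lambda)^\dagger_0$ is an $\sO$-complement to $\wrad\wP(\lambda)^\dagger=\sum_{i\geq 1}\wfa_i\wP(\lambda)^\dagger$ (the last equality from tightness and Proposition \ref{three part prop}), so iterating --- or simply applying Nakayama --- yields the generation statement
\begin{equation*}
\wP(\lambda)^\dagger=\wP(\lambda)^\dagger_0+\sum_{i\geq 1}\wfa_i\wP(\lambda)^\dagger_0.
\end{equation*}
Applying $\phi$, one uses the $\wA_{K,0}$-stability in (ii) and the multiplicity-one occurrence of $L_K(\lambda)|_{\wA_{K,0}}$ in $\Delta_K(\lambda)$ to identify $K\phi(\wP(\lambda)^\dagger_0)$ with the degree-zero piece $\Delta_K(\lambda)_0$ of the $\wfa_K$-graded structure from Condition (3). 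The grading on $\Delta_K(\lambda)$ then forces the resulting sum
\begin{equation*}
\phi(\wP(\lambda)^\dagger)=\phi(\wP(\lambda)^\dagger_0)\oplus\bigoplus_{i\geq 1}\wfa_i\phi(\wP(\lambda)^\dagger_0)
\end{equation*}
to be \emph{direct}. Consequently $\sum_{i\geq r}\wfa_i\phi(\wP(\lambda)^\dagger_0)$ is an $\sO$-direct summand, hence pure, hence equal to $\phi(\wP(\lambda)^\dagger)\cap\rad^r\phi(\wP(\lambda)^\dagger)_K$; this gives tightness of $\phi(\wP(\lambda)^\dagger)|_\wfa$ outright (and establishes your $(\ast)$ as a byproduct). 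The point you are missing is precisely this Nakayama-plus-directness mechanism: it bypasses the degree-by-degree lift entirely by transporting the problem from the ungraded $P_K(\lambda)$ to the graded $\Delta_K(\lambda)$ before doing any work with the filtration.
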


\begin{proof}Since $L_K(\lambda)\cong P_K(\lambda)/\rad P_K(\lambda)$, (ii) implies  $K\wP(\lambda)^\dagger_0+ E_K(\lambda)/E_K(\lambda)\cong
L_K(\lambda)|_{A_{K,0}}$ as an $A_{K,0}$-module. Now let $\phi:P_K(\lambda)\to\Delta_K(\lambda)$ be a surjection. Then $\phi(\rad P_K(\lambda))=\rad\Delta_K(\lambda)$, so that $\phi(K\wP(\lambda)^\dagger_0)$ is nonzero and isomorphic to $L_K(\lambda)|_{A_{K,0}}$. Since the latter module appears with multiplicity one as an $\wA_{K,0}$-composition factor of $\Delta_K(\lambda)$, we must have $\phi(K\wP(\lambda)^\dagger)=\Delta_K(\lambda)_0$, the $A_{K,0}$-submodule of $\Delta_K(\lambda)$ in Conditions (\ref{conditions})(3) \& (4). (Observe that
$$\Delta_K(\lambda)_0\cap\rad\Delta_K(\lambda)=\Delta_K(\lambda)_0\cap\sum_{i\geq 1}\wfa_{K,i}\Delta_K(\lambda)_0=0$$
by Conditions (\ref{conditions})(1), (2), (3).)

Since $\wP(\lambda)^\dagger_0$ is a complement in $\wP(\lambda)^\dagger$ to $\wP(\lambda)^\dagger\cap\rad P_K(\lambda)=\wP(\lambda)^\dagger\cap\rad\wP(\lambda)^\dagger_K$,  which is $\sum_{i\geq 1}\wfa_i\wP(\lambda)^\dagger$, by the assumed tightness (iii) and
Proposition \ref{three part prop}, we have
$$\wP(\lambda)^\dagger=\wP(\lambda)_0+\sum_{i\geq 1}\wfa_i\wP(\lambda)=\wP(\lambda)^\dagger_0+\sum_{i\geq 1}\wfa_i\wP(\lambda)^\dagger_0+\sum_{i\geq 1}\wfa_i\wP(\lambda)^\dagger).$$
Further iterating this equation (or just applying Nakayama's lemma), we get that
$$\wP(\lambda)^\dagger=\wP(\lambda)^\dagger_0+\sum_{i\geq 1}\wfa_i\wP(\lambda)^\dagger_0.$$
Thus,
$$\phi(\wP(\lambda)^\dagger)=\phi(\wP(\lambda)^\dagger_0)+\sum_{i\geq 1}\wfa_i\phi(\wP(\lambda)^\dagger_0).$$
Since $K\phi(\wP(\lambda)^\dagger_0)=\phi(K\wP(\lambda)^\dagger_0)=\Delta_K(\lambda)_0$, the $\wfa_K$-graded structure of Condition (3) shows the above sum is direct, even inside the right-hand side sum. That is,
$$\sum_{i\geq 1}\wfa_i
\phi(\wP(\lambda)^\dagger_0)=\bigoplus_{i\geq 1}\wfa_i\phi(\wP(\lambda)_0^\dagger).$$

We claim that the $\wfa$-lattice $\phi(\wP(\lambda)^\dagger)|_\wfa$ is tight. By Propositon \ref{three part prop}, it is sufficient to check the equality
$$\sum_{i\geq r}\wfa_i\phi(\wP(\lambda)^\dagger)=\phi(\wP(\lambda)^\dagger)\cap\rad^r\phi(\wP(\lambda))^\dagger_K, \quad\forall r\in\mathbb N.$$
For $r=0$, this is trivial, since $\sum_{i\geq 0}\wfa_i=\wfa$, and $\wfa\phi(\wP(\lambda)^\dagger)=\phi(\wP(\lambda)^\dagger)$.
For $r\geq 1$, we may argue as above that $\sum_{i\geq r}\wfa_i\wP(\lambda)^\dagger=\sum_{i\geq r}\wfa_i\wP(\lambda)_0.$ Consequently,
$$\sum_{i\geq r}\wfa_i\phi(\wP(\lambda)^\dagger)=\sum_{i\geq r}\wfa_i\phi(\wP(\lambda)^\dagger_0).$$
Multiplying by $K$, the left-hand side gives $(\rad\wfa_K)^r\phi(K\wP(\lambda)^\dagger)=\rad^r\phi(\wP(\lambda)^\dagger)_K$.
The right-hand side, however, is an $\sO$-direct summand of $\phi(\wP(\lambda)^\dagger)$, as shown by the direct sum decomposiiton above. Hence, it is equal to the intersection with $\phi(\wP(\lambda)^\dagger)$ of its product
with $K$, i.~e., it is equal to
$$\phi(\wP(\lambda)^\dagger)\cap\rad^r\phi(\wP(\lambda)^\dagger_K),$$
and so $\phi(\wP(\lambda)^\dagger)$ is tight as an $\wfa$-lattice. Since $\phi(\wP(\lambda)^\dagger)=\wA\phi(v)$, with $v$ and
$\phi(v)$ being $\lambda$-weight vectors, we have $\phi(\wP(\lambda)^\dagger)\cong\wDelta(\lambda)$, proving
the theorem.
\end{proof}

\begin{cor}\label{cor4.4} Let $\wA$ be a QHA over $\sO$ with weight poset $\Lambda$. If $\lambda\in\Lambda$ satisfies the hypothesis of Theorem \ref{bigtheorem}, then $\gr\wDelta(\lambda)$ has a simple head. Explicitly, $\text{\rm head}(\gr\wDelta(\lambda))\cong
L(\lambda)$, the head of $\wDelta(\lambda)$. The assertion still holds if $\lambda$ belongs to an
ideal $\Gamma$, and $\gr\wDelta(\lambda)$ is regarded as a $\gr \wA_\Gamma$-module.\end{cor}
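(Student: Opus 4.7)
The plan is to bootstrap the tightness conclusion of Theorem \ref{bigtheorem} into the simple-head statement via three quick reductions, with the only serious content hiding in the first step.

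First, I would reconcile the two a priori different ``gradings'' in play. From Condition \ref{conditions}(1)--(2) an easy induction yields $(\rad\wA_K)^r=(\rad\wfa_K)^r\wA_K$ for all $r\ge 1$; combined with Condition \ref{conditions}(3), which says $\wDelta_K(\lambda)$ is $\wfa_K$-generated by $\wDelta_K(\lambda)_0$, one obtains $\rad^r\wDelta_K(\lambda)=(\rad\wfa_K)^r\wDelta_K(\lambda)$. Thus $\wrad^r\wDelta(\lambda)$ agrees whether computed via $\wA$ or via $\wfa$, and $\gr\wDelta(\lambda)$ is the same $\sO$-graded module either way. Theorem \ref{bigtheorem} gives that $\wDelta(\lambda)|_{\wfa}$ is tight, so by Proposition \ref{three part prop}(b) the $\gr\wfa$-module $\gr\wDelta(\lambda)$ is generated by $(\gr\wDelta(\lambda))_0$. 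The pure inclusion $\wfa\hookrightarrow\wA$, through the isomorphism of Proposition \ref{three part prop}(a) together with the containment $\wfa_r\subseteq\wrad^r\wA$, induces a graded algebra map $\gr\wfa\cong\wfa\to\gr\wA$ through which the $\gr\wfa$-action on $\gr\wDelta(\lambda)$ factors. Hence $(\gr\wDelta(\lambda))_0$ also generates $\gr\wDelta(\lambda)$ as a $\gr\wA$-module.

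Second, $(\gr\wA)_{>0}$ is nilpotent (since $\rad\wA_K$ is), so it lies in the Jacobson radical of $\gr\wA$. The generation statement above forces
\[
(\gr\wA)_{>0}\,\gr\wDelta(\lambda)=\bigoplus_{r\ge 1}(\gr\wDelta(\lambda))_r,
\]
so the head of $\gr\wDelta(\lambda)$ as a $\gr\wA$-module equals the head of $(\gr\wDelta(\lambda))_0=\wDelta(\lambda)/\wrad\wDelta(\lambda)$ as a module over $(\gr\wA)_0=\wA/\wrad\wA$. Third, I identify that head. As a quotient of the standard module $\wDelta(\lambda)$, $(\gr\wDelta(\lambda))_0$ is cyclic, generated by the image $\bar v$ of a weight-$\lambda$ generator $v\in\wDelta(\lambda)_\lambda$ (Lemma \ref{generation}). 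Since $\wDelta(\lambda)$ has unique simple quotient $L(\lambda)$, every simple $\wA$-quotient of $(\gr\wDelta(\lambda))_0$ is isomorphic to $L(\lambda)$, so the head is $L(\lambda)^m$ for some $m\ge 1$. Because $L(\lambda)_\lambda$ is one-dimensional and generates $L(\lambda)$, a single weight-$\lambda$ vector in $L(\lambda)^m$ generates only a diagonal copy of $L(\lambda)$; the image of $\bar v$ must generate the head, forcing $m=1$. Thus $\text{head}(\gr\wDelta(\lambda))\cong L(\lambda)$.

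For the $\Gamma$-statement, the kernel of $\wA\twoheadrightarrow\wA_\Gamma$ annihilates $\wDelta(\lambda)$ (since $\lambda\in\Gamma$), so the radical filtration of $\wDelta_K(\lambda)$ computed over $\wA_K$ coincides with the one computed over $\wA_{\Gamma,K}$; the resulting $\gr\wDelta(\lambda)$ is simultaneously a $\gr\wA$- and a $\gr\wA_\Gamma$-module with the same underlying submodule lattice (as the $\gr\wA$-action factors through the natural graded map $\gr\wA\to\gr\wA_\Gamma$). Hence the $\gr\wA_\Gamma$-head is again $L(\lambda)$. The main obstacle in the whole argument is really that first reduction: aligning the $\wfa$- and $\wA$-filtrations so that the intrinsic tightness from Theorem \ref{bigtheorem} translates cleanly into generation of $\gr\wDelta(\lambda)$ by its degree-$0$ part over $\gr\wA$; once that generation is in hand, the rest is formal.
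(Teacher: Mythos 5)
Your proof is correct and follows essentially the same route as the paper: Theorem \ref{bigtheorem} gives $\wfa$-tightness of $\wDelta(\lambda)$, Proposition \ref{three part prop}(b)(iii) converts this into generation of $\gr\wDelta(\lambda)$ by its degree-zero piece over $\gr\wfa$ (hence over $\gr\wA$ via the natural map), and the head is then read off from the degree-zero piece $\wDelta(\lambda)/\wrad\wDelta(\lambda)$. Your opening reconciliation — that Condition \ref{conditions}(2) forces $(\rad\wA_K)^r = (\rad\wfa_K)^r\wA_K$ so the two radical filtrations on $\wDelta_K(\lambda)$ (and hence the two $\gr$'s) agree — is a point the paper leaves implicit, and your final head identification can be shortened to the standard fact that a nonzero quotient of a module with simple head again has that simple head, which the paper uses; neither change alters the underlying argument.
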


\begin{proof}
By Theorem \ref{three part prop}, $\wDelta(\lambda)$ is tight as a $\wfa$-lattice. Next, note the action of $\gr\wfa$ on $\gr\wDelta(\lambda)$ factors naturally through the graded map $\gr\wfa\to\gr\wA$. By Propostion \ref{three part prop}(c), $(\gr\wDelta(\lambda)_0$ generates $\gr\wDelta(\lambda)$ as a $\gr\wfa$-module, and thus as a $\gr\wA$-module.  But
$(\gr\wDelta(\lambda))_0$ has a simple $\gr\wA$-head, since it has a simple $\wA$-head (which, of course, is also the head $L(\lambda)$ of $\wDelta(\lambda)$).
\end{proof}

Now we can use Theorem \ref{mainIntGrthm}, applied to ideals in $\Lambda$, to obtain some integral QHAs
of the form $\gr\wA_\Gamma$.   The assertion below regarding $\wN$ follows from Corollary \ref{Nameless-1}.

\begin{theorem}\label{integralquasihereditary} Let $\wA$ be a QHA over $\sO$ with weight poset $\Lambda$.  If $\Gamma$ is an ideal in $\Lambda$ such that the hypothesis of Theorem \ref{bigtheorem} holds for
all $\lambda\in\Gamma$, then
  $\gr\wA_\Gamma$ is a QHA (over $\sO$) with weight poset $\Gamma$, and
standard modules $\gr\wDelta(\lambda)$, $\lambda\in\Gamma$.
 In addition,  if $\wN$ is a $\wA_\Gamma$-lattice with a $\wDelta$-filtration, then the $\gr\wA$-lattice $\gr\wN$ has a
$\gr\wDelta$-filtration  (a filtration with sections $\gr\wDelta(\mu)$, $\mu\in\Gamma$. \end{theorem}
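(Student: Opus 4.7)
The plan is to reduce Theorem \ref{integralquasihereditary} to a direct application of Theorem \ref{mainIntGrthm} to the quotient algebra $\wA_\Gamma$ with poset $\Gamma$, and then to invoke Corollary \ref{Nameless-1} for the filtration statement about $\wN$. All three hypotheses of Theorem \ref{mainIntGrthm} need to be checked for $\wA_\Gamma$ in place of $\wA$: that $\wA_\Gamma$ is a split QHA over $\sO$ with poset $\Gamma$, that $\gr(\wA_{\Gamma,K})$ is a QHA over $K$ with the same poset $\Gamma$, and that each $\gr\wDelta(\lambda)$, $\lambda\in\Gamma$, has a simple head as a $\gr\wA_\Gamma$-module.

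First I would record that $\wA_\Gamma$ is a split QHA over $\sO$ with poset $\Gamma$ and standard modules $\wDelta(\gamma)$, $\gamma\in\Gamma$, as already noted in the weight-algebra discussion of \S3. Passing to a Morita equivalent algebra via Proposition \ref{Morita2} if necessary, we may assume $\wA_\Gamma$ is itself a TQHA, so that Hypothesis \ref{hypothesis1}(1) holds for $\wA_\Gamma$. The third required property, that each $\gr\wDelta(\lambda)$ for $\lambda\in\Gamma$ has a simple head when regarded as a $\gr\wA_\Gamma$-module, is exactly the content of the final sentence of Corollary \ref{cor4.4}, which applies by our standing assumption that the hypothesis of Theorem \ref{bigtheorem} holds for every $\lambda\in\Gamma$.

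The main obstacle is verifying Hypothesis \ref{hypothesis1}(2) for $\wA_\Gamma$, namely that $\gr(\wA_{\Gamma,K})=\gr(\wA_\Gamma)_K$ is a QHA over $K$ with poset $\Gamma$ and standard modules $\gr\wDelta_K(\lambda)$, $\lambda\in\Gamma$. Here I would invoke the field analogue of Corollary \ref{Nameless} announced for Appendix I: this gives a natural graded algebra isomorphism $\gr((\wA_\Gamma)_K)\cong(\gr\wA_K)_\Gamma$. Since $\gr\wA_K$ is a QHA over $K$ with poset $\Lambda$ by Hypothesis \ref{hypothesis1}(2) for $\wA$, and $\Gamma$ is a poset ideal of $\Lambda$, the quotient $(\gr\wA_K)_\Gamma$ is automatically a QHA with poset $\Gamma$ and standard modules $\gr\wDelta_K(\lambda)$, $\lambda\in\Gamma$. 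This supplies the missing hypothesis.

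With all three hypotheses verified, Theorem \ref{mainIntGrthm} applied to $\wA_\Gamma$ yields that $\gr\wA_\Gamma$ is a split QHA over $\sO$ with weight poset $\Gamma$ and standard modules $\gr\wDelta(\lambda)$, $\lambda\in\Gamma$. For the final assertion about $\wN$, I would then apply Corollary \ref{Nameless-1} to $\wA_\Gamma$: the hypotheses of that corollary for $\wA_\Gamma$ are exactly what we have just established, so any $\wA_\Gamma$-lattice $\wN$ with a $\wDelta$-filtration yields a $\gr\wA_\Gamma$-module $\gr\wN$ with a filtration by sections $\gr\wDelta(\mu)$, $\mu\in\Gamma$, as claimed. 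Essentially no new computation is required beyond the identification $\gr((\wA_\Gamma)_K)\cong(\gr\wA_K)_\Gamma$; the theorem is then assembled from Corollary \ref{cor4.4}, Theorem \ref{mainIntGrthm}, and Corollary \ref{Nameless-1}.
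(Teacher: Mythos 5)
Your proposal is correct and follows essentially the same route the paper sketches in the sentence preceding the theorem: apply Theorem \ref{mainIntGrthm} to the quotient $\wA_\Gamma$ with poset $\Gamma$, using Corollary \ref{cor4.4} for the simple-head hypothesis and Corollary \ref{Nameless-1} for the statement about $\wN$. Your explicit identification $\gr((\wA_\Gamma)_K)\cong(\gr\wA_K)_\Gamma$ via the Appendix I lemma to supply Hypothesis \ref{hypothesis1}(2) for $\wA_\Gamma$ is exactly the implicit step the paper leaves to the reader.
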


\section{Quantum enveloping algebras}
For the rest of this paper, let $\Phi$ be a (classical) irreducible root system. Let $C=(c_{i,j})$ be the $r\times r$ Cartan matrix of $\Phi$; thus, $c_{i,j}=(\alpha_i^\vee,\alpha_j)$ if $\Pi=\{\alpha_1,\cdots,\alpha_r\}$ is a fixed simple set of roots of $\Phi$.  Let $\Phi^+$ be the corresponding set of positive roots, and let $\alpha_0\in\Phi^+$ be the
maximal short root. For $\alpha\in\Phi$, let
\begin{equation}\label{dalpha}
d_\alpha:=(\alpha,\alpha)/(\alpha_0,\alpha_0)\in\{1,2,3\}.\end{equation}

 Next, let  $p>2$ be a prime integer.
If $\Phi$ has type $G_2$, then assume $p>3$. (Later additional assumptions will be made on $p$.)
Thus, $p$ does not divide any of the integers
$d_\alpha$ defined above in (\ref{dalpha}). In the polynomial ring
${\mathbb Z}[v]$, let ${\mathfrak m}=(p, v-1)$ and put $\sA={\mathbb Z}[v]_{\mathfrak m}$.  Setting
$\sO=\sA/(\phi_p(v))$, with $\phi_p(v)=v^{p-1} +\cdots +1$, the $p$th cyclotomic polynomial,
$(K,\sO,{\mathbb F}_p)$ is a $p$-modular system, where $K={\mathbb Q}(\zeta)$. Here $\zeta: = v
+ (\phi_p(v))$
is a primitive $p$th root of unity. Under the quotient map $\pi:\sO\to{\mathbb F}_p$, we have $\pi(\zeta)=1$.
Later it will be important to observe that, given $\alpha\in\Phi$,
\begin{equation}\label{units}
\zeta^{d_\alpha} - 1 = u_\alpha (\zeta -1),\quad{\text{\rm for some unit $u_\alpha\in\sO$.}}
\end{equation}
In fact, we can take $u_\alpha= \zeta^{d_\alpha-1} + \cdots+\zeta +1\in\sO$, which is a unit
since $\pi(u_\alpha)=d_\alpha\cdot 1$ in ${\mathbb F}_p$ by comments just above.

  Let $U'$ be the quantum enveloping algebra over the function field ${\mathbb Q}(v)$, with
 generators $E_\alpha,F_\alpha, K_\alpha, K_\alpha^{-1}$, $\alpha\in\Pi$,  satisfying the usual relations. For
 $\beta\in\Phi$, write $\beta=\sum_{\alpha\in\Pi} n_\alpha\alpha$ (for integers $n_\alpha$) and
 set
 \begin{equation}\label{Kbeta}K_\beta:=\prod_{\alpha\in\Pi} K_\alpha^{n_\alpha}\in U'.\end{equation}
  Let
$U'_\sA$ be the $\sA$-subalgebra generated by the divided powers $E^{(i)}_\alpha=E_\alpha^i/[i]_\alpha^!$,
$F^{(i)}_\alpha=F_\alpha^i/[i]_\alpha^!$ and the elements $K_\alpha$ $K_\alpha^{-1}$, $\alpha\in\Pi$, $i\in\mathbb N$.\footnote{Here $[i]^!_\alpha$ means that the polynomial $[i]^!$ is to be evaluated at $v^{d_\alpha}$.} Set
$\widetilde U'_\zeta:=\sO\otimes_\sA U'_\sA$ and $U'_\zeta=K\otimes_\sO \widetilde U'_\zeta$.

 The category $U'_\zeta$-mod of finite dimensional integrable, type 1 $U'_\zeta$-modules has irreducible modules
$L_\zeta(\lambda)$ indexed by the poset $X^+$ of dominant weights \cite[\S5.1]{Jan2}. In fact, it is a highest weight
category (in the sense of \cite{CPS-1}) with standard (resp., costandard) modules denoted $\Delta_\zeta(\lambda)$ (resp., $\nabla_\zeta(\lambda)$), $\lambda\in X^+$. Given a finite ideal $\Lambda$ in the poset $X^+$, let
$U'_\zeta$-mod$[\Lambda]$ be the full subcategory of $U'_\zeta$-mod consisting of objects whose composition factors have the form  $L_\zeta(\lambda)$, $\lambda
\in\Lambda$.  Let $\widetilde I$ be the annihilator in $\widetilde U'_\zeta$ of $U'_\zeta$-mod$[\Lambda]$, and
set $\wA=\wU'_{\zeta,\Lambda} :=\wU'_\zeta/\widetilde I$. Then $\wA$ is an integral QHA in the sense above.
See \cite{CPS7}, \cite{DS} for more details.

We can repeat the previous paragraph, replacing $\Lambda$ by a finite non-empty ideal of the poset
of
$p$-regular
dominant weights.\footnote{A weight $\lambda\in X$ is $p$-regular if $(\lambda+\rho,\alpha^\vee)\not\equiv 0$ mod $p$ for all roots $\alpha$. (Here $\rho$ is the Weyl weight.)  So we assume that (1) $\Lambda\subseteq X^+$ consists of $p$-regular weights, and (2) if $\mu\in X^+$ is $p$-regular 
and $\mu\leq \lambda$, then $\mu\in\Lambda$.}   We wish to verify that the Hypothesis \ref{hypothesis1} holds for the algebra $\wA=\wA_\Lambda$ for $\Lambda$ a suitably large ideal of $p$-regular weights. Recall from \cite[ \S8 ]{PS9} that  $\Lambda$ is {\it fat} provided, given any $p$-restricted, $p$-regular weight $\lambda$, we have $2(p-1)\rho+w_0\lambda\in\Lambda$.  This implies, in particular,  that $\Lambda$ contains all
$p$-restricted, $p$-regular dominant weights.

For $\alpha\in\Pi$, $K_\alpha^p-1$ is central in $U'_\zeta$. Consider  the $\sO$-Hopf algebra
$$\wU_\zeta:= \wU'_\zeta/\langle K_\alpha^p-1\,|\, \alpha\in\Pi\rangle.$$
Let $U_\zeta:=\wU_{\zeta,K}$. (Of couse, the $K_\alpha^p-1\in \widetilde I$.) Following Lusztig  \cite[6.5(b)]{L2} and \cite[Thm.~8.3.4]{L1}, let $\widetilde u_\zeta$ and $u_\zeta=\widetilde u_{\zeta,K}$
be the small quantum enveloping algebras.   Thus, $\widetilde u_\zeta$ (resp., $u_\zeta$) is a (normal) subalgebra of $\wU_\zeta$ (resp., $U_\zeta$). In addition, $\widetilde u_\zeta$ is a free $\sO$-module
of rank $p^{\dim {\mathfrak g}}$. Also, $u_\zeta$ admits a
triangular decomposition
$$u_\zeta^-\otimes u_\zeta^0\otimes u_\zeta^+\overset {\text{\rm mult}}\longrightarrow u_\zeta,$$
where $u_\zeta^+, u_\zeta^-, u_\zeta^0$ are certain subalgebras of the algebras $U^+_\zeta, U^-_\zeta, U^0_\zeta$ defined in \cite{L1}, \cite{L2}. 

  Finally, let $u_\zeta'$ is defined to be the product of the $p$-regular blocks in the algebra $u_\zeta$.
  Define $\widetilde u'_\zeta=\widetilde u_\zeta\cap u'_\zeta$. It is easy to see that $\widetilde u_\zeta'$ is a direct factor of $\widetilde u_\zeta$, and $k\otimes \widetilde u'_\zeta$ is the product of all
  regular blocks of $k\otimes\widetilde u_\zeta$.\footnote{Write $u_\zeta=u'_\zeta\oplus u^s_\zeta$, where
  $u^s_\zeta$ is the product of all singular blocks of $u_\zeta$. Then all composition factors of any ``reduction
  mod $p$" of (a full $\widetilde u_\zeta$-lattice in) a $u'_\zeta$-module, such as $k\otimes (\widetilde u_\zeta\cap u'_\zeta)$, have $p$-regular weights. A similar statement holds for $u^s_\zeta$, using $p$-singular weights. Now $\widetilde u_\zeta/(\widetilde u_\zeta\cap u'_\zeta \oplus \widetilde u_\zeta\cap u_\zeta^s)$ is a homomorphic
image of the lattices $\widetilde u_\zeta/ \widetilde u_\zeta\cap u'_\zeta$ and $\widetilde u_\zeta/\widetilde u_\zeta\cap u^s_\zeta$ in $u^s_\zeta$ and $u'_\zeta$, respectively. This common quotient must be zero, giving
  the
  claimed properties of $\widetilde u'_\zeta.$} 

 \begin{theorem}\label{preliminaryresult}  Let $\wA=\wU_{\zeta,\Lambda}$ for a finite, non-empty ideal $\Lambda$ of $p$-regular dominant weights.

(a) Hypothesis \ref{hypothesis1} holds provided $p>h$.

(b) If, in addition, $\Lambda$ is a fat ideal, then the image
$\wfa$ of $\widetilde u_\zeta$ in  $\wA$
is a pure subalgebra such that Conditions (\ref{conditions})(1)--(4) hold.

(c) Also, for
$p>h$ and $\Lambda$ fat, Condition \ref{conditions}(5) holds for  the grading in Condition \ref{conditions}(1).
\end{theorem}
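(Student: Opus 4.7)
The plan is to tackle (a), (b), (c) in turn, leaning on \cite{PS9} for the hardest ingredient in (a), on \cite{AJS} for most of (b), and on the integral deformation theory of Appendix II (Theorem \ref{newandimportant}) for the decisive step in (c).

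For (a), I would check the two items of Hypothesis \ref{hypothesis1} separately. Item (1) amounts to exhibiting a $\Lambda$-standard weight algebra structure on $\wA$: the usual orthogonal weight idempotents $e_\lambda$, coming from the action of the quantum torus on any integrable type 1 module, sit inside $\wA$ (after the Morita reduction of Proposition \ref{Morita2} if necessary), and the highest weight theory for $U'_\zeta$-mod supplies both the one-dimensionality $\dim L_{\mathfrak p}(\lambda)_\lambda=1$ and the upper triangular weight condition $L_{\mathfrak p}(\lambda)_\mu\neq 0\Rightarrow\mu\leq\lambda$ uniformly at both primes of $\sO$ (the $p$-regular hypothesis and $p>h$ keep the residue-field story parallel to the characteristic 0 story). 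Item (2), the quasi-heredity of $\gr\wA_K$ with standard modules $\gr\wDelta_K(\lambda)$, is exactly the main result of \cite{PS9} for the quantum enveloping algebra at a primitive $p$th root of unity, and can be quoted.

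For (b), I first show that the image $\wfa$ of $\widetilde u_\zeta$ in $\wA$ is $\sO$-pure: this follows from the fact that $\wA$ is a free $\wfa$-module on the right (via the triangular decomposition and the fact that $\widetilde u_\zeta$ is a normal $\sO$-Hopf subalgebra of $\wU_\zeta$), and freeness of the quotient gives purity of the inclusion. Condition \ref{conditions}(1), the existence of a tight grading $\wfa_K=\bigoplus_{n\geq 0}\wfa_{K,n}$, is precisely the regular-block grading on $u'_\zeta$ established in \cite[\S17,\S18]{AJS} (the ``case 2'' alluded to in the introduction), and fatness of $\Lambda$ is what guarantees that the projection $\widetilde u_\zeta\to\wfa$ sees all of the regular-block structure (specifically, so that $2(p-1)\rho+w_0\lambda\in\Lambda$ for every $p$-restricted $p$-regular $\lambda$). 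Condition (2) then follows from the standard identification of $\rad U_\zeta$-mod structure with the augmentation ideal of $u'_\zeta$ on regular blocks. Conditions (3) and (4) are verified by passing through the baby Verma / ``induced'' description of $\Delta_K(\lambda)$: after choosing a Wedderburn complement $\wA_{K,0}$ containing $\wfa_{K,0}$ and all idempotents $e_\lambda$ (using Lemma \ref{preliminarysectionlemma}), the AJS grading on $u'_\zeta$ transfers to $\Delta_K(\lambda)$ with the $0$-graded piece generating.

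Part (c) is the main obstacle and is essentially the reason for writing Appendix II. The issue is that Condition \ref{conditions}(5) demands a positive grading on the integral algebra $\wfa$ itself that base changes over $K$ to the grading of (1); nothing in \cite{AJS} yields such an integral grading, since their deformation is set up either in positive characteristic or in characteristic $0$. My plan is to invoke Theorem \ref{newandimportant}, whose quantum deformation theory over $\sO\subseteq\mathbb{Q}(\zeta)$ provides precisely such a positive grading on the natural $\sO$-form of the regular block of the small quantum group, matching the $K$-grading of (1) after base change. Once that integral grading is in hand, Condition \ref{conditions}(5) follows directly: each $\wfa_r$ is an $\sO$-pure submodule of $\wfa$, and the fact that $p>h$ together with the units $u_\alpha$ of (\ref{units}) being invertible in $\sO$ keeps the triangular decomposition of $\widetilde u_\zeta$ compatible with the grading, preventing any torsion obstruction to the identification $K\wfa_r=\wfa_{K,r}$. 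The technical heart of this step, and the reason for postponing its full proof to the appendix, is the verification that the integral small-quantum-group deformation is genuinely graded and not merely filtered.
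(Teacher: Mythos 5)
Your overall architecture matches the paper's: quote \cite{PS9} for the quasi-heredity of $\gr\wA_K$, quote \cite[\S\S17--18]{AJS} (plus Tanisaki's theorem) for the Koszul/tight grading on $u'_\zeta$, and defer Condition \ref{conditions}(5) to the integral deformation theory of Appendix II. Part (c) of your sketch is essentially the paper's argument.

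However, there is a genuine gap in your treatment of purity in (b). You assert that $\wA$ is a free right $\wfa$-module ``via the triangular decomposition and the fact that $\widetilde u_\zeta$ is a normal $\sO$-Hopf subalgebra of $\wU_\zeta$.'' This does not follow: $\wA=\wU_{\zeta,\Lambda}$ is a \emph{quotient} of $\wU_\zeta$ by an annihilator ideal, and although $\wU_\zeta$ is free over $\widetilde u_\zeta$, there is no reason the quotient should remain free (or even projective) over the image $\wfa$; the kernel is not a $\widetilde u_\zeta$-direct summand in any apparent way. The paper avoids this entirely: fatness of $\Lambda$ forces $\widetilde u'_\zeta\to\wfa$ to be an isomorphism, and likewise $k\otimes\widetilde u'_\zeta\to k\otimes\wfa$ (image in $A=\wA_k$) to be injective, whence Lemma \ref{wellknownlemma}(b) immediately gives purity of $\wfa$ in $\wA$. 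This is a shorter and, crucially, a correct argument where yours would need substantial justification.

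Two smaller points. In (a) you wave at ``the usual orthogonal weight idempotents ... sit inside $\wA$ (after the Morita reduction of Proposition \ref{Morita2} if necessary),'' but the theorem asserts $\wA$ itself is a TQHA, so Morita reduction is out of place; and the nontrivial part of Hypothesis \ref{hypothesis1}(1) is precisely showing the idempotents $e_\mu$ lie in $\wA$ and not merely in $\wA_K$ or $\wA_{\widehat\sO}$. The paper does this by exhibiting the commutative subalgebra $\wC$ (image of $\wU^0_\zeta$), identifying $\wC_K$ and $\wC_k$ as split semisimple, lifting idempotents over the completion $\widehat\sO$, and descending via $K\wC\cap\widehat\sO\wC=\wC$. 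Also, for Conditions \ref{conditions}(2)--(3) the paper cites \cite[Lem.~8.3]{PS9} and \cite[Thm.~6.4]{PS9} rather than the informal descriptions you give; and the final clause of Condition (4) invokes Lemma \ref{preliminarysectionlemma} explicitly. Your descriptions point in the right direction but would need these references to be a proof.
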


\begin{proof} We begin by showing that Hypothesis \ref{hypothesis1}(1) holds. We have already remarked that $\wA$ is
an integral QHA. We need to check that $\wA$ contains an appropriate set of idempotents. We recall
from \cite[Cor.~3.3]{DS} that the left $\wU_\zeta$-module $\wA$ is a direct sum of weight spaces, i.~e., , spaces upon which $\wU_\zeta^0$ acts
in a uniform way determined by $\mu$.  Let $X=X_\Lambda$ be the
(integral) weights $\mu$ for which $\wU_\zeta$-module $\wA$ has a nonzero $\mu$-weight space. All weights
in $X$ are $p$-regular, and $\Lambda\subseteq X$. Let $\widetilde C$ be the image in $\wA$ of $\wU^0_\zeta$. Thus, $\wC$ is commutative. Since $\wA$ is a direct sum of rank 1 $\wC$-modules (on which $\wU_\zeta^0$ acts with some weight $\lambda\in X$), the algebra $\wC_K$ has a faithful completely reducible module,
namely, $\wA_K$.  Hence, $\wC_K$ is (split) semisimple and a direct sum of copies of $K$, one copy for each
weight. Similarly, the image $C$ of $\wC$ in $\wA_k=A$ is split semisimple, and its irreducible (1-dimensional)
modules are also indexed by the set $X$. If we base change $\wA$ to $\wA_{\widehat\sO}$, where $\widehat\sO$ is
the completion of $\sO$, idempotents in $\overline\wC\subseteq A$ lift to idempotents in
$\widehat\sO\wC\subseteq \wA_{\widehat\sO}$, preserving orthogonality. In this way, we get a set
of $|X|$ orthogonal primitive idempotents in $\widehat\sO\sC\subseteq K\widehat\sO\wC$. But $K\wC$
already has a complete set of $|X|$ orthogonal primitive idempotents in the (commutative)
algebra $K\wC\subseteq K\widehat\sO\wC$. By the uniqueness of complete sets of central primitive idempotents, the two sets
of $|X|$ idempotents we have constructed are the same. Hence, the idempotents $\{e_\mu\,|\,\mu\in X\}$
constructed for $K\wC$ actually
lie in $K\wC\cap\widehat\sO\wC=\wC\subseteq\wA$. This set of idempotents, together with its subset indexed by $\Lambda$, gives $\wA$ the
structure of a TQHA. Thus, Hypothesis \ref{hypothesis1}(1) holds.

Property (2) holds for $p>h$ by \cite[Thm.~8.4]{PS9} (with $p$ playing the role of $e$). This completes the proof of (a).

Now we prove (b).  Since $\Lambda$ is fat, $
\widetilde u'_{\zeta,K}=u'_\zeta$ maps injectively onto its image $\wfa_K$ in
$\wA_K$. The same holds for $\widetilde u'_\zeta$ since $\widetilde u'_\zeta$ is contained in $u'_\zeta$, so, in particular, $\widetilde u'_\zeta\cong \wfa$.  Similarly, using fatness, $k\otimes\widetilde u'_\zeta$ maps isomorphically onto the
image of $k\otimes\wfa$ in
$k\otimes \wA$. The latter image is not {\it a priori} isomorphic to $k\otimes\wfa$, but follows here, because
$\widetilde u'_\zeta\cong\wfa$. Now Lemma \ref{wellknownlemma} implies that $\wfa$ is pure in $\wA$.

 In \cite[\S\S 18.17--18.21]{AJS}, it is proved that $u'_\zeta$ is a Koszul algebra, a property which implies that
 $u'_\zeta$ has a tight grading (i.~e., a grading making it isomorphic to $\gr u'_\zeta$), provided that
 $p>h$ and the Lusztig character conjecture holds in the quantum enveloping algebra case (which is
 true \cite{T}). Thus,
Condition \ref{conditions}(1) holds. Condition \ref{conditions}(2) follows from left-right symmetry and \cite[Lem.~8.3]{PS9}, and
Condition \ref{conditions}(3) follows from \cite[Thm.~6.4]{PS9}. In fact, choose a larger ideal $\Lambda'$
containing $\Lambda$ and consisting of $p$-regular weights, such that the projective cover $P(\lambda)$
of $L(\lambda)$ in $U_{\Lambda'}$ is projective for $\wfa'_K$. The final condition of (4) follows from
Lemma \ref{preliminarysectionlemma}.  This proves (b).

The proof of (c) requires further results from \cite{AJS}, and it is given in \S7, Appendix II. \end{proof}

\begin{remark} As remarked in the  proof, the graded algebra $u'_\zeta$ is isomorphic to 
$\gr u'_\zeta$ as graded algebras. Also, Conditions \ref{conditions}(2)  show that the 
$\gr \wfa_K\subseteq\gr\wA_K$ (whether $\Lambda$ is fat or not). Thus, the
natural map $\widetilde u'_\zeta\to\wfa$ induces a graded map 
$\widetilde u'_\zeta\to\gr \wA$, under the hypotheses of the theorem (using the grading on 
$\widetilde u'_\zeta$ in the proof of part (c)).
The map is an injection when $\Lambda$ is fat.
\end{remark}
\begin{theorem}\label{mainquantumresult} Assume that $p>2h-2$ is an odd prime and consider the algebra $\wA=\wU_{\zeta,\Gamma}$, where $\Gamma$
is an non-empty ideal of $p$-regular dominant weights. Then $\gr\wA$ is a QHA over $\sO$, with
standard modules $\gr\wDelta(\lambda)$, $\lambda\in\Gamma$.

\end{theorem}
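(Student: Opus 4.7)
The plan is to deduce Theorem \ref{mainquantumresult} from Theorem \ref{integralquasihereditary}. First, I embed $\Gamma$ in a fat ideal $\Lambda$ of $p$-regular dominant weights and put $\wB := \wU_{\zeta,\Lambda}$; then $\wB_\Gamma = \wU_{\zeta,\Gamma} = \wA$, so $\gr(\wB_\Gamma) = \gr\wA$. It therefore suffices to verify the hypothesis of Theorem \ref{bigtheorem} for every $\lambda \in \Gamma$ inside the larger algebra $\wB$; Theorem \ref{integralquasihereditary} then concludes that $\gr\wA$ is a QHA with standard modules $\gr\wDelta(\lambda)$, $\lambda \in \Gamma$.

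Applying Theorem \ref{preliminaryresult} to $\wB$ supplies Hypothesis \ref{hypothesis1} together with all of Conditions \ref{conditions}(1)--(5), using the pure subalgebra $\wfa \subset \wB$ arising from the small quantum group (so $\wfa \cong \widetilde u_\zeta'$ by fatness). The hypotheses $p > 2h-2 > h$ and fatness of $\Lambda$ are precisely what parts (a)--(c) of that result need. What remains is to produce, for each $\lambda \in \Gamma$, a full $\wB$-lattice $\wP(\lambda)^\dagger \subset P_K(\lambda)$ satisfying properties (i)--(iii) of Theorem \ref{bigtheorem}.

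For this step I would write $\lambda = \lambda_0 + p\lambda_1$ with $\lambda_0$ a $p$-restricted dominant weight and $\lambda_1 \in X^+$, and take $\wP(\lambda)^\dagger$ to be the cyclic $\wB$-submodule of $P_K(\lambda)$ generated by a $\lambda$-weight vector arising inside a suitable integral form of $\wQ(\lambda_0)\otimes\wDelta(\lambda_1)^{(1)}$, where $\wQ(\lambda_0)$ is a lift to $\wU_\zeta$ of the $\widetilde u_\zeta'$-projective cover of $L(\lambda_0)$ and $(1)$ denotes the quantum Frobenius twist. The hypothesis $p > 2h-2$ is the classical threshold guaranteeing that such lifts $\wQ(\lambda_0)$ exist and remain $\widetilde u_\zeta'$-projective. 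Property (i) then holds by construction. Property (iii) follows because, after restriction to $\wfa$, the lattice $\wP(\lambda)^\dagger$ is a direct summand of the $\wfa$-projective module $\wQ(\lambda_0)|_{\wfa}\otimes_\sO \wDelta(\lambda_1)^{(1)}|_{\wfa}$ (the second tensor factor is a trivial $\wfa$-module, since the Frobenius twist kills $\wfa$), and finite projective $\wfa$-modules are automatically tight by the remark just after Definition \ref{tightdef1}.

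The main obstacle is property (ii): the existence of an $\sO$-direct summand $\wP(\lambda)^\dagger_0$ complementing $\wP(\lambda)^\dagger \cap \rad P_K(\lambda)$ in $\wP(\lambda)^\dagger$ with $K\wP(\lambda)^\dagger_0 + E_K(\lambda)$ stable under some Wedderburn complement $\wB_{K,0}$ that contains both $\wfa_{K,0}$ and the idempotents $\{e_\mu\}$. Verifying this requires carefully coordinating the tight grading on $u_\zeta'$ produced in Appendix II with the larger graded structure on $\gr\wB_K$ from Hypothesis \ref{hypothesis1}(2), and then invoking Lemma \ref{preliminarysectionlemma} to lift a compatible Wedderburn complement of $\wfa_K$ to one of $\wB_K$. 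Once (i)--(iii) are secured for all $\lambda \in \Gamma$, Corollary \ref{cor4.4} yields that $\gr\wDelta(\lambda)$ has a simple head, and Theorem \ref{integralquasihereditary} completes the proof.
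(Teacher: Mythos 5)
Your overall strategy matches the paper's: reduce to Theorem \ref{integralquasihereditary} (hence Theorem \ref{bigtheorem}) by enlarging $\Gamma$ to a fat ideal $\Lambda$, invoke Theorem \ref{preliminaryresult} for Hypothesis \ref{hypothesis1} and Conditions \ref{conditions}(1)--(5), then construct $\wP(\lambda)^\dagger$ from the factorization $P_K(\lambda)\cong Q_K(\lambda_0)\otimes L_K(\lambda_1)^{[1]}$. However, there are two real problems.

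First, your integral form uses $\wDelta(\lambda_1)^{(1)}$ as the second tensor factor. But the factorization of the PIM is $P_K(\lambda)\cong Q_K(\lambda_0)\otimes L_K(\lambda_1)^{[1]}$ with the \emph{irreducible} $L_K(\lambda_1)$, not the Weyl module $\Delta_K(\lambda_1)$. Since $\Delta_K(\lambda_1)$ is generally strictly larger than $L_K(\lambda_1)$ for the quantum group at a root of unity, $\wQ(\lambda_0)\otimes\wDelta(\lambda_1)^{(1)}$ does not sit inside $P_K(\lambda)$; you cannot take a ``$\lambda$-weight vector arising inside'' it as though it lived in $P_K(\lambda)$. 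The paper instead sets $\wP(\lambda)^\dagger=\wQ(\lambda_0)\otimes\wL^{\text{\rm min}}(\lambda_1)^{[1]}$, where $\wL^{\text{\rm min}}(\lambda_1)^{[1]}$ is the full $\wU_\zeta$-lattice in $L_K(\lambda_1)^{[1]}$ generated by the highest weight vector $v_1$, and then shows this equals $\wA v$ (which is the nontrivial content of (i), using the minimality of the second factor together with Z.~Lin's result that $\wL(\lambda_0)\otimes\wL^{\min}(\lambda_1)^{[1]}$ is a quotient of $\wDelta(\lambda)$). Your (iii) argument then does essentially the right thing --- $\wfa$ acts trivially through the Frobenius twist, so the restriction is $\wfa$-projective, hence tight --- but it should be phrased for $\wL^{\min}(\lambda_1)^{[1]}$, not $\wDelta(\lambda_1)^{(1)}$, and one does not need ``direct summand of'': the full tensor product is itself $\wfa$-projective.

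Second, and more seriously, property (ii) is not actually established. You identify it as the main obstacle and gesture at ``coordinating the tight grading'' and Lemma \ref{preliminarysectionlemma}, but no argument is given that a suitable $\wA_{K,0}$-stable complement $K\wP(\lambda)^\dagger_0+E_K(\lambda)$ exists. The paper's argument here is concrete and does not route through Appendix II or a delicate comparison of gradings: one observes that $\Delta_K(\lambda)=P_K(\lambda)/E_K(\lambda)$ is completely reducible as an $\wA_{K,0}$-module (Conditions \ref{conditions}(3)--(4)), that the $\wA_{K,0}$-submodule generated by the image of $v$ therein has the same composition series as $L_K(\lambda)|_{\wA_{K,0}}$ (since the $\lambda$-weight space is one-dimensional), and then a dimension count forces $\wA_{K,0}v+E_K(\lambda)$ to equal the image of $K\wP(\lambda)^\dagger_0$. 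This step is the crux of the whole proof and needs to be supplied explicitly.
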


\begin{proof}
Our goal is to verify the hypotheses of Theorem \ref{integralquasihereditary} (which are the hypotheses of Theorem \ref{bigtheorem}).   Conditions \ref{conditions} have already been dealt with in Theorem
\ref{preliminaryresult}, leaving checking hypotheses (i)---(iii) in the statement of Theorem \ref{bigtheorem}:

Fix $\lambda\in\Gamma$.  In the category of  $U_\zeta$-modules, we have\footnote{If $M$ is a $U({\mathfrak g}_K)$-module, then $M^{[1]}$ denotes the $U_\zeta$-module obtained by ``pulling $M$ back" through the
Frobenius morphism $U_\zeta\to U({\mathfrak g}_K)$. If $M=L_K(\lambda)$ is the irreducible ${\mathfrak g}_K$-module of
highest weight $\lambda$, then $M^{[1]}\cong L_K(p\lambda)$.}
 \begin{equation}\label{alternativegrading} P_K(\lambda)\cong Q_K(\lambda_0)\otimes L_K(\lambda_1)^{[1]}.\end{equation}
 Enlarging $\Gamma$ to a larger poset $\Lambda$ of weights (with $\Gamma$ an ideal
 in $\Lambda$), we can insure that $P_K(\lambda)$ in the display makes sense as an $\wA_K$-module. We assume this has been done.

Assuming $p\geq 2h-2$, there is an
$\wA$-module $\wQ(\lambda_0)$, a PIM in a certain highest weight category, lifting a corresponding truncated projective module for
$\wA/\pi\wA$, with $\wQ(\lambda_0)_K\cong Q_K(\lambda_0)$. (See \cite{DS}.)\footnote{Reference \cite{DS} discusses integral lifting of
 truncated projective modules. $Q(\lambda_0)$ is a PIM in the category of $G$-modules whose composition factors $L(\nu)$ all satisfy
  $\nu\leq 2(p-1)\rho+w_0\lambda_0$. This assumes $p\geq 2h-2$; see \cite{JanB}.}
  Also, $Q(\lambda_0):=\wQ(\lambda_0)/\pi\wQ(\lambda_0)$ is $\wfa_k$-projective, and
so $\wQ(\lambda_0)$, which is $\sO$-free, is $\widetilde\fa$-projective. Thus, $\wQ(\lambda_0)$
can be given an $\wfa$-grading, as
$$\wQ(\lambda_0)\cong\widetilde\fa\otimes_{\widetilde\fa_0}\wL(\lambda_0)\subseteq\fa_K\otimes_{\fa_{K,0}}L_K(\lambda_0),$$
where $\wL(\lambda_0)$ is a full $\wfa$-lattice in $L_K(\lambda_0)$ isomorphic to the projection
$\wQ(\lambda_0)/\wQ(\lambda_0)\cap \wQ(\lambda_0)_K$ in $L_K(\lambda_0)\cong
\wQ(\lambda_0)_K/\rad \wQ(\lambda_0)_K
\cong \wP_K(\lambda_0)/\rad P_K(\lambda_0)$.
 Fix a nonzero $\lambda$-weight vector $v=v_0\otimes v_1\in
Q_K(\lambda_0)_0\otimes L_K(\lambda_1)^{[1]}$, with $v_0\in Q_K(\lambda_0)_0=\left(A\otimes_{A_0}L_K(\lambda_0)\right)_0\cong A_0\otimes_{A_0}L_K(\lambda_0)\cong
L_K(\lambda_0)$. Put
\begin{equation}\label{integralprojectiveP}
\wP(\lambda)^\dagger=\wQ(\lambda_0)\otimes \wL^{\text{\rm min}}(\lambda_1)^{[1]},\end{equation}
 where $\wL^{\text{\rm min}}(\lambda_1)^{[1]}$ is the full $\widetilde U_\zeta$-lattice in $L_K(\lambda_1)^{[1]}$ generated
by $v_1$. (Of course, this lattice, as well as $\wQ(\lambda_0)$, must be viewed as $\widetilde U_\zeta$-modules to make sense of the tensor product.) In general,
$\wP(\lambda)^\dagger$ is not a projective $\wA$-module, though it is projective for $\wfa$.  Its scalar extension $\wP(\lambda)_K\cong P_K(\lambda)$ is $\wA_K$-projective.
The projectivity of $\wP(\lambda)^\dagger|_\wfa$ implies that it is tight. (See the observation following
Definition \ref{tightdef1}.) This establishes part (iii) of the hypothesis of Theorem \ref{bigtheorem}.

Also,
$\wP(\lambda)^\dagger=\wA v$, since adding $\wP(\lambda)^\dagger_{\geq 1}=\wQ(\lambda_0)_{\geq 1}\otimes\wL(\lambda_1)^{[1]}$ to both sides of $\wA v$ gives $\wP(\lambda)^\dagger$, as
may be checked in
$$(\wP(\lambda)^\dagger+\rad P_K(\lambda))/\rad P_K(\lambda)\cong\wL(\lambda_0)\otimes\wL^{\text{\rm min}}(\lambda_1)^{[1]}.$$
(Notice that the latter module is a homomorphic image of $\wDelta(\lambda)$. This follows from a result
of Z.~Lin, c.f., \cite[Prop.~1.9]{CPS7}.)   This establishes hypothesis
(i) of Theorem \ref{bigtheorem}.

To establish hypothesis (ii) of Theorem \ref{bigtheorem}, observe that $K\wP(\lambda)^\dagger\subseteq \wA_{K,0}v$.
Modulo $E_K(\lambda)$, $\wA_{K,0}v$ generates an $\wA_{K,0}$-submodule of $\wP_K(\lambda)/E_K(\lambda)\cong\Delta_K(\lambda)$. The latter module is completely reducible as an $A_{K,0}$-module, and the
$\wA_{K,0}$-module generated by its one-dimensional $\lambda$-weight space is isomorphic to
$L_K(\lambda)|_{\wA_{K,0}}$.  Thus, $K\wP(\lambda_0)+ E_K(\lambda)/E_K(\lambda)$ and
$\wA_{K,0}v+ E_K(\lambda)/E_K(\lambda)$ have the same dimension
and are thus equal (in view of the above containment). This establishes hypothesis (ii). \end{proof}

\section{Appendix I}  We prove the following result.

\begin{lemma} Suppose that $\Gamma$ is a poset ideal in a finite poset $\Lambda$, and that $B$
is a QHA over a field $K$ with weight poset $\Lambda$. Assume that $\gr B$ is also a QHA with
weight poset $\Lambda$, where we identify irreducible modules for $\gr B$ and $B$ through
$B/\rad B$.  Then $\gr (P(\gamma)_\Gamma)$ is a PIM for $(\gr B)_\Gamma$, for each $\gamma\in
\Gamma$. Here $P(\gamma)$ is the PIM for $B$ associated to $\gamma$. In particular, for
$\lambda\in\Lambda$, $\gr\Delta(\lambda)$
is the standard module for $\gr B$ associated to $\lambda$.\end{lemma}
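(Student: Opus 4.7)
The plan is to identify $\gr P(\gamma)$ with the PIM of $\gr B$ attached to $\gamma$ and then exploit the functoriality of $\gr$ applied to the surjection $\pi\colon P(\gamma)\twoheadrightarrow P(\gamma)_\Gamma$. The decomposition $B\cong\bigoplus_\lambda P(\lambda)^{\oplus\dim L(\lambda)}$ of $B$ as a left module over itself yields $\gr B\cong\bigoplus_\lambda(\gr P(\lambda))^{\oplus\dim L(\lambda)}$, so each $\gr P(\lambda)$ is a direct summand of $\gr B$, hence a projective $\gr B$-module. It has simple head $L(\lambda)$, inherited from the head of $P(\lambda)$, so it is indecomposable. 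Since $\gr B$ is assumed to be a QHA with poset $\Lambda$ whose simples are identified with those of $B$, its PIMs are exactly the $\gr P(\lambda)$; in particular $\gr P(\gamma)=P^{\mathrm{gr}}(\gamma)$.

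Because $\pi$ is surjective, $\pi(\rad^i P(\gamma))=\rad^i P(\gamma)_\Gamma$, so $\gr\pi\colon\gr P(\gamma)\twoheadrightarrow\gr(P(\gamma)_\Gamma)$ is a surjection of graded $\gr B$-modules. All composition factors of $\gr(P(\gamma)_\Gamma)$ lie in $\Gamma$, being the same as those of $P(\gamma)_\Gamma$, so the target is annihilated by the hereditary ideal $J^{\mathrm{gr}}_\Gamma$ of $\gr B$ corresponding to $\Lambda\setminus\Gamma$. Hence $\gr\pi$ factors as
$$
\gr P(\gamma)\twoheadrightarrow(\gr P(\gamma))_\Gamma\xrightarrow{\bar\pi}\gr(P(\gamma)_\Gamma),
$$
in which the middle term is, by QHA theory applied to $\gr B$, the PIM $P^{\mathrm{gr}}(\gamma)_\Gamma$ of $(\gr B)_\Gamma$ attached to $\gamma$.

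The crux is to show $\bar\pi$ is an isomorphism. I would establish this by proving $J^{\mathrm{gr}}_\Gamma=\gr J_\Gamma$ as ideals of $\gr B$, where $J_\Gamma=\sum_{\nu\notin\Gamma}Be_\nu B$ is the hereditary ideal of $B$ for $\Lambda\setminus\Gamma$: the resulting identification $(\gr B)_\Gamma\cong\gr(B_\Gamma)$ of algebras makes $\gr(P(\gamma)_\Gamma)$ a direct summand of $(\gr B)_\Gamma$ (since $P(\gamma)_\Gamma$ is a direct summand of $B_\Gamma$), hence a projective $(\gr B)_\Gamma$-module with simple head $L(\gamma)$, forcing $\bar\pi$ to be an isomorphism. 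The inclusion $J^{\mathrm{gr}}_\Gamma\subseteq\gr J_\Gamma$ is immediate because the idempotents $e_\nu$ sit in $(\gr B)_0\subseteq\gr B$. The reverse inclusion is the main obstacle; it reduces to the dimension equality $\dim(\gr B)_\Gamma=\dim B_\Gamma$. My intended tool is the matching of Cartan matrices $\dim e_\lambda Be_\mu=\dim e_\lambda(\gr B)e_\mu$, which holds because $e_\lambda Be_\mu$ is filtered by the $e_\lambda(\rad^i B)e_\mu$ with associated graded $e_\lambda(\gr B)_ie_\mu$; combined with BGG reciprocity in both QHAs, this should force the required count. A backup is induction on $|\Gamma|$: for $\Gamma=\Gamma'\cup\{\lambda\}$ with $\lambda$ maximal, compare the exact sequence $0\to\Delta(\lambda)^{\oplus n_\gamma}\to P(\gamma)_\Gamma\to P(\gamma)_{\Gamma'}\to 0$ (with $n_\gamma=[\nabla(\lambda):L(\gamma)]$) to its $\gr B$-counterpart, using the inductive hypothesis on $\Gamma'$.

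For the final assertion, take $\Gamma=\{\mu\in\Lambda:\mu\leq\lambda\}$, an ideal whose unique maximal element is $\lambda$. In any QHA the PIM at a maximal weight coincides with the corresponding standard module, so $P(\lambda)_\Gamma=\Delta(\lambda)$ in $B$, and analogously $(\gr P(\lambda))_\Gamma$ is the standard module of $\gr B$ at $\lambda$. Applying the first part then identifies $\gr\Delta(\lambda)$ with the standard module of $\gr B$ attached to $\lambda$, as required.
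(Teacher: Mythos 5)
Your strategy is genuinely different from the paper's, and it can be made to work, but the step you flag as ``the crux'' is the one you leave unjustified, and it does need an actual argument. You correctly reduce everything to the single numerical identity $\dim (\gr B)_\Gamma=\dim B_\Gamma$, and you observe that the (ungraded) Cartan matrices of $B$ and $\gr B$ agree. But then you write that this, ``combined with BGG reciprocity in both QHAs, \emph{should} force the required count.'' That is the entire content of the lemma and it cannot be waved past: a priori the two highest weight structures on $B/\rad B$ could have different decomposition matrices, hence different $\dim\Delta(\mu)$, $\dim\nabla(\mu)$, hence different $\dim B_\Gamma$. To close the gap you need the following linear-algebra fact, which you never invoke: writing $D_{\lambda\mu}=[\Delta(\mu):L(\lambda)]$ and $D'_{\lambda\mu}=[\nabla(\mu):L(\lambda)]$, BGG reciprocity gives $C=D\,(D')^{T}$, where $D$ and $(D')^{T}$ are, after refining the poset order to a total order, unipotent upper- and lower-triangular matrices respectively; such a factorization of an invertible matrix is \emph{unique}. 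Hence the identical Cartan matrices of $B$ and $\gr B$ force identical $D$ and $D'$, hence identical $\dim\Delta(\mu)$ and $\dim\nabla(\mu)$, and then $\dim B_\Gamma=\sum_{\mu\in\Gamma}\dim\nabla(\mu)\dim\Delta(\mu)=\dim(\gr B)_\Gamma$, as you want. With that inserted, your argument ($J_\Gamma^{\mathrm{gr}}\subseteq\gr J_\Gamma$ plus equal dimensions gives $(\gr B)_\Gamma\cong\gr(B_\Gamma)$; then $\gr(P(\gamma)_\Gamma)$ is a direct summand with simple head, hence a PIM) is correct and is a clean, ``global'' alternative.

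Your ``backup'' induction on $|\Gamma|$, however, has essentially the same gap dressed differently: for $\lambda$ maximal in $\Gamma$ but not in $\Lambda$, matching the $\gr B$ short exact sequence to the image of the $\gr$ of the $B$-sequence again requires knowing $\dim\Delta^{\mathrm{gr}}(\lambda)=\dim\Delta(\lambda)$ and $[\nabla^{\mathrm{gr}}(\lambda):L(\gamma)]=[\nabla(\lambda):L(\gamma)]$, which is exactly the decomposition-matrix equality. The paper instead inducts downward on $|\Lambda\setminus\Gamma|$: it peels off a weight $\mu$ that is maximal in all of $\Lambda$ (so that $\gr\Delta(\mu)=\gr P(\mu)=P^{\mathrm{gr}}(\mu)=\Delta^{\mathrm{gr}}(\mu)$ is automatic), compares the submodule $D\cong\Delta(\mu)^{\oplus m}\subset P(\gamma)$ and its graded avatar $\gr^{\#}D$ with the bottom layer $D'$ of a standard filtration of $\gr P(\gamma)$ (which exists because $\gr B$ is a QHA), and concludes by a dimension count available precisely because $\mu$ is globally maximal. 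That inductive anchoring is what your backup is missing; your primary route sidesteps it entirely, at the price of the unipotent triangular factorization argument you should make explicit.
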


\begin{proof} If $\Gamma=\Lambda$, there is nothing to prove. Otherwise, let $\mu\in\Lambda\backslash\Gamma$ be maximal in $\Lambda$, and put $m=[P(\gamma):L(\mu)]$. Thus, $P(\gamma)$ contains
a submodule $D\cong\Delta(\mu)^{\oplus m}$, and $P(\gamma)/D\cong P(\gamma)_{\Lambda'}$,
where $\Lambda'=\Lambda\backslash\{\mu\}$.

We have $D/\rad D\cong L(\mu)^{\oplus m}$,
 and $\gr^\# D / \gr^\# \rad D$ is a graded
version of $D/\rad D$, as may be checked by counting composition factors. Write
$$\gr^\#D/\gr^\#\rad D=\bigoplus_{i\in{\mathbb N}} L(\mu)(i)^{\oplus m_i},$$
with $m=\sum m_i$, $m_i\geq 0$.

Since $\mu$ is maximal, $\gr \Delta(\mu)$ is projective both as a graded or ungraded $\gr B$-module, and its
shifts $(\gr \Delta(\mu))(i)$ are also projective. Also, the head of $(\gr\Delta(\mu))(i)$ is $L(\mu)(i)$.
Consequently, there is a  graded module map
$$\phi: D:=\bigoplus_{i\in \mathbb N}\Delta(\mu)(i)^{\oplus m_i}\to\gr^\#D,$$
giving a
surjection when composed with $\gr^\#D\to\gr^\#D/\gr^\#\rad D$.  By the assumption that $\gr B$ is
a QHA,
the ungraded projective $\gr B$-module $\gr P(\gamma)$ has a filtration by standard modules, which may
be assumed to have bottom term $D'$, a direct sum of standard modules.  Consequently, if $(\gr^\# D)'$ denotes the ungrqaded $\gr B$-modules obtained by forgetting the grading on $\gr^\#D$,
we have
$$D'\subseteq{\text{\rm Image}}\,\phi'\subseteq (\gr^\# D)',$$
where $\phi'$ is the ungraded version of the map $\phi$. (Of course, the underlying image
is the same as for the graded version. A similar remark applies for $(\gr^\# D)'$.

Since
$$\dim D'=m\dim\Delta(\mu)=\dim D=\dim\gr^\#D,$$
it follows that equality holds in the inclusions
$$(\gr P(\gamma))_{\Lambda'}=\gr P(\gamma)/D'\cong\gr(P(\gamma)/D)=\gr(P(\gamma)_{\Lambda'})$$
as ungraded $\gr A$-modules. (Recall the graded isomorphism $\gr P(\gamma)/\gr^\#D\cong$ 
\linebreak
$(\gr P(\gamma)/D)$.)

The lemma now follows by induction on $\Lambda\backslash\Gamma$.
\end{proof}

As a corollary of the proof, we have the following result. In the statement, we maintain the
notation and hypotheses of the above lemma.   The corollary is parallel to Corollary \ref{Nameless}.

\begin{cor}Let $M$ be a (finite dimensional) $B$-module such that $M$ and $\gr M$ have
standard filtrations. Then $(\gr M)_\Gamma=\gr(M_\Gamma)$.\end{cor}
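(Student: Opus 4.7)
The plan is to mimic the inductive strategy of the preceding lemma (which is itself the ungraded counterpart of Corollary \ref{Nameless}) and induct on $|\Lambda\setminus\Gamma|$. The base case $\Gamma=\Lambda$ is trivial, since both sides equal $\gr M$. For the inductive step, I would pick $\mu\in\Lambda\setminus\Gamma$ that is maximal in $\Lambda\setminus\Gamma$, and hence---because $\Gamma$ is a downset---maximal in $\Lambda$. Set $m=[M:L(\mu)]$ and $\Lambda':=\Lambda\setminus\{\mu\}$. Maximality of $\mu$ lets one arrange a standard filtration of $M$ so that its bottom layer is a submodule $D\cong\Delta(\mu)^{\oplus m}$, with $M/D$ a $B_{\Lambda'}$-module possessing a standard filtration with sections $\Delta(\nu)$, $\nu\neq\mu$.

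On the graded side I would invoke the key identification used in the preceding lemma's proof, applied to $M$ in place of $P(\gamma)$: that argument only uses that $\gr M$ carries a standard filtration together with the projectivity (forced by maximality of $\mu$) of the shifted standards $\gr\Delta(\mu)(i)$. Its output is that the graded submodule
\[
\gr^{\#}D\;:=\;\bigoplus_n\,(D\cap\rad^n M)/(D\cap\rad^{n+1}M)\;\hookrightarrow\;\gr M
\]
is isomorphic to $\bigoplus_i\gr\Delta(\mu)(i)^{\oplus m_i}$ with $\sum_i m_i=m$, and that it occurs as the bottom layer of a standard filtration of $\gr M$. Because $\rad^n(M/D)=(\rad^n M+D)/D$, there is a canonical graded isomorphism $\gr M/\gr^{\#}D\cong\gr(M/D)$, so $\gr(M/D)$ inherits a standard filtration from $\gr M$.

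Next I would observe that $D$ lies in the kernel of $M\to M_\Gamma$ and $\gr^{\#}D$ lies in the kernel of $\gr M\to(\gr M)_\Gamma$: every nonzero quotient of $D$ (resp.\ of each summand $\gr\Delta(\mu)(i)$) has $L(\mu)$ (resp.\ a shift of $L(\mu)$) in its head, while $\mu\notin\Gamma$. The universal property of the truncation functor then yields canonical isomorphisms
\[
M_\Gamma\;\cong\;(M/D)_\Gamma,\qquad (\gr M)_\Gamma\;\cong\;(\gr M/\gr^{\#}D)_\Gamma\;\cong\;(\gr(M/D))_\Gamma.
\]

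Finally, $M/D$ together with $\gr(M/D)$ satisfies the hypotheses of the corollary relative to the smaller poset $\Lambda'$ (I would check the minor point that $\rad B$ and $\rad B_{\Lambda'}$ act identically on any $B_{\Lambda'}$-module, so that the two candidate notions of $\gr$ coincide), so the inductive hypothesis delivers $\gr((M/D)_\Gamma)=(\gr(M/D))_\Gamma$. Chaining the isomorphisms then gives the required equality $\gr(M_\Gamma)=(\gr M)_\Gamma$. The main obstacle is the graded identification of $\gr^{\#}D$ as a direct sum of shifted standards sitting at the bottom of a standard filtration of $\gr M$; this is the one step that really uses both hypotheses---on $M$ and on $\gr M$---and everything else is a formal chase of universal properties combined with the induction.
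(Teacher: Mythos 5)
Your proof is correct and follows essentially the same approach as the paper, which disposes of the corollary with a single line (``obtained by essentially the same argument as above, using induction on $\Lambda\backslash\Gamma$''). You have correctly identified that the inductive step of the preceding lemma's proof---peeling off the bottom layer $D\cong\Delta(\mu)^{\oplus m}$ for $\mu$ maximal, identifying $\gr^\#D$ with the bottom of a $\Delta$-filtration of $\gr M$ via a dimension count and the projectivity of $\gr\Delta(\mu)(i)$, and invoking the canonical isomorphism $\gr M/\gr^\#D\cong\gr(M/D)$---only uses that $M$ and $\gr M$ both carry standard filtrations, not that $M$ is a PIM, and your kernel/truncation chase then reduces $(\gr M)_\Gamma$ and $\gr(M_\Gamma)$ to the corresponding objects for $M/D$ over $B_{\Lambda'}$. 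The only small thing left implicit is that the inductive hypothesis requires $\gr(B_{\Lambda'})$ to again be a QHA with poset $\Lambda'$, which is supplied by the preceding lemma (via $\gr(B_{\Lambda'})\cong(\gr B)_{\Lambda'}$, a quotient of the QHA $\gr B$ by a heredity ideal); you touch this indirectly by noting that the two radical actions agree, but it is worth stating explicitly.
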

\begin{proof} This is obtained by essentially the same argument as above, using induction on
$\Lambda\backslash\Gamma$.\end{proof}

 \section{Appendix II: Quantum deformation theory over $\sO$} The goal of this appendix is provide a proof of
 Theorem \ref{preliminaryresult}(c). The proof will require some preparation and relies on
 results in \cite{AJS}. In the process, we extend (when $p>h$ is a prime) the quantum deformation
 theory of \cite{AJS} to a version over $\sO={\mathbb Z}_{(p)}[\zeta]$, where $\zeta$ is a $p$th root
 of unity.

  We let $\Phi$ be an irreducible root system as in \S5, and let $\zeta$ be a primitive $p$th
  root of unity. Assume that $p$ is an odd prime, $p\not=3$ in case $\Phi$ has type $G_2$. We also use other
  notation of \S5 above, e.~g., $d_\alpha=(\alpha,\alpha)/(\alpha_0,\alpha_0)$.  Notations
  of \cite{AJS} will be used as they arise. For instance, $U$ denotes the quantum algebra
  defined in \cite[\S1.3]{AJS} over ${\mathbb Q}(\zeta)$, a homomorphic image of the
 DeConcini-Kac quantum algebra
  $U_2$. Thus,
   $U=U^-U^0U^+$ is the quantum enveloping algebra defined on \cite[p.~15]{AJS}. Here $U^+, U^-$
  are finite dimensional, while $U^0$ is the algebra of Laurent polynomials in $K_\alpha$, $\alpha\in\Pi$.

   Let
 $$S':=\sO\left[\frac{K_\alpha-1}{\zeta^{d_\alpha} -1},\alpha\in\Pi\right].$$
 This is a polynomial algebra over $\sO$, an $\sO$-form of the polynomial algebra $K\left[K_\alpha, \alpha\in \Pi\right]$.
 Observe that, for any $\alpha\in\Phi^+$ (not just in $\Pi$), the expression
 $$H'_\alpha:=\frac{K_\alpha-1}{\zeta^{d_\alpha}-1}$$
 makes sense, using the definition of $K_\alpha$ as a product given on \cite[p.~47]{AJS} (see also
 (\ref{Kbeta})). A recursive
 argument, applying the identity $xy-1=(x-1)y + (y-1)$ to the numerator of $H'_\alpha$, shows that $H'_\alpha$
 belongs to $S'$. (Note that the denominator may be adjusted to have the correct $d_\alpha$ by multiplying by a unit
 in $\sO$, since all the numbers $d_\alpha$ are relatively prime to $p$.)

 We now make the following additional observations for $\alpha\in\Phi^+$, using the notation $[K_\alpha;j]$,
 $0\leq j< p$ on \cite[p.~48]{AJS}, and given implicitly below:

 \begin{itemize}
 \item[(1)] $K_\alpha\in S'$. (This is obvious, since $(\zeta^{d_\alpha}-1)\frac{K_\alpha-1}{\zeta^{d_\alpha} -1}
 =K_\alpha-1\in S'$, for $\alpha\in\Phi^+$.)

   \item[(2)] Let $\widehat{S'}$ be the completion (or just localization) with respect to the augmentation
 ideal of $S'$ (i.~e., the kernel of the augmentation map $S'\to\sO$. Then $K_\alpha^{-1}\in\widehat{S'}$.

  \item[(3)] $[K_\alpha; 0]\in K_\alpha^{-1}S'$, since
 $$[K_\alpha;0]=\frac{K_\alpha^{-1}}{\zeta^{-d_\alpha}}\cdot\frac{K_\alpha+1}{\zeta^{d_\alpha}+1}
\cdot\frac{K_\alpha-1}{\zeta^{d_\alpha}-1},$$
 and $p$ is odd (which implies $\zeta^{d_\alpha}+1$ is a unit in $\sO$).

 \item[(4)] $[K_\alpha;j]\in K_\alpha^{-1}S'$, $1\leq j< p$, since $[K_\alpha;j]=$
 $$\begin{aligned} 
& \frac{K_\alpha\zeta^{d_\alpha j}-\zeta^{-d_\alpha j}K_\alpha^{-1}}{\zeta^{d_\alpha}-\zeta^{d_\alpha}}\\[2mm]
 &=\frac{(K_\alpha-1)\zeta^{d_\alpha j} +\zeta^{-d_\alpha j}(1-K_\alpha^{-1})}{\zeta^{d_\alpha}-\zeta^{d_\alpha}  }
+ \frac{\zeta^{d_\alpha j} -\zeta^{-d_\alpha j}}{\zeta^{d_\alpha}-\zeta^{-d_\alpha}}\\[2mm]
&= \left(\frac {K_\alpha -1}{\zeta^{d_\alpha} -1}\right)\cdot\frac {1}{(\zeta^{d_\alpha} +1)\zeta^{-d_\alpha}}\dot(\zeta^{d_\alpha j} +\zeta^{-d_\alpha j} K_\alpha^{-1}) +\frac{\zeta^{d_\alpha j} -\zeta^{-d_\alpha j}}
{\zeta^{d_\alpha}-\zeta^{-d_\alpha}}.\end{aligned}
$$

\item[(5)] $[K_\alpha; j]^{-1}\in \widehat{S'}$ (or the corresponding localization of $S'$), as follows from
(2) and (4), $1\leq j<p$.

 \item[(6)] $\log K_\alpha\in \widehat {S'}$, since $\frac{(\zeta^{d_\alpha}-1)^r}{r}\in\sO$ for all
 integers $r\geq 1$.
 \end{itemize}

As mentioned above, the proofs in this section will require the results and methods of \cite{AJS}. In particular, we follow the notation
of that paper closely. Consider the algebras $S$ and $\sB_S$  \cite[p.~180]{AJS}  (expanded on
\cite[p.~179, bottom]{AJS} to allow direct sums of compositions of wall-crossing functors; indeed, we should
take $\sB_S:=\End^\#_{{\mathcal K}(W_a,S)}({\mathcal Q}_{\mathcal I}(S))$---see \cite[p.~219]{AJS}---for sufficiently large $\mathcal I$). The algebra $S$ is a symmetric algebra on the integral root lattice associated to $\Phi$
(the latter denoted $R$ in \cite{AJS}).
The algebra $\sB_S$ is $\mathbb Z$-graded, compatibly with a grading on $S$ in which every
root has grade 2. It  gives rise, by base change, to various algebras $\sB_A$, for any commutative $S$-algebra $A$. Taking $A={\mathbb Q}(\zeta)$ yields an algebra isomorphic to the ungraded endomorphism algebra of the ``$Y$-projective generator" of a certain category $\sC_K(\Omega)$, $K={\mathbb Q}(\zeta)$, with $\Omega$ any regular orbit of the affine Weyl group $W_a\cong W_p$ on integral weights. This result requires restrictions on $p$: it must be a positive odd integer $>h$, not divisible by $3$ if $\Phi$ is of type $G_2$. (In \cite{AJS}, $\Phi$ is allowed to be
decomposable.)
 Examination of this ungraded  endomorphism algebra shows that $\sB^{\text{\rm op}}_K$ is Morita equivalent to the block algebra of the small quantum group $u_\zeta$ associated to the orbit $\Omega$ . See \cite[Thm.~16.18]{AJS}, which is essentially already proved in this ``Case 2" by the Claim \cite[\S14.13(4)]{AJS}.

When $p>h$ is a prime, we can define the above Morita equivalence over a DVR $\sO$ with fraction
field $K$, namely, $\sO:={\mathbb Z}_{(p)}[\zeta]={\mathbb Z}[\zeta]_{(p,\zeta-1)}$. This is not done in \cite{AJS}, but, with $S'$
and $\widehat{S'}$ above in hand, it is possible to modify the arguments there to establish this
Morita equivalence. Here is an outline:
We wish to transport many of the results stated in \cite{AJS} for the algebra
$U$ over $K:={\mathbb Q}(\zeta)$ in ``Case 2" to an $\sO$-form $U_\sO$ of $U$. Like $U$, the form
$U_\sO$ has a triangular decomposition $U_\sO=U^-_\sO U^0_\sO U^+_\sO$ with $U^-_\sO$,
$U^+_\sO$ generated over $\sO$ by the $F_\alpha\,, E_\alpha$, $\alpha\in\Phi^+$, and
$$U^0_\sO:=\sO\left[K^{\pm1}_\sO, H'_\alpha,\alpha\in\Pi\right],$$
the localization of $S'$ above with respect to the multiplicatively closed set generated by the $K_\alpha$'s.
Using (3) and (4) above, and standard identities (e.g., Kac's identity \cite[Lem.~5.27]{DDPW}), we find
that $U_\sO$ is an $\sO$-algebra, an $\sO$-form of $U$. It also inherits the comultiplication on $U$, defined in \cite[\S7.1]{AJS}. Observe
$$\Delta(\frac{K_\alpha-1}{\zeta^{d_\alpha}-1})=\frac{K_\alpha-1}{\zeta^{d_\alpha}-1} \otimes K_\alpha
+ 1\otimes \frac{K_\alpha-1}{\zeta^{d_\alpha}-1}.$$

Define  $B_\sO$ to be the $\sO$-algebra obtained by localizing $U^0_\sO$ at the multiplicatively closed set
generated by all $[K_{\alpha};j]$, $\alpha\in\Phi^+$, $0\leq j<p$.
Then $B_\sO$ is an $\sO$-form of the $K$-algebra $B$ defined in \cite[p.~48, bottom]{AJS}. Put $H_\alpha=
[K_\alpha;0]$ as at the beginning of \cite[\S8]{AJS}. The results of \cite[\S8.6]{AJS} hold (with similar
proofs) if $B$ is replaced by
$B_\sO$. In
 particular, they hold for the $B_\sO$-algebra $A=\widehat{S'}$, which has other properties we need, such as those required on \cite[p.~103, top]{AJS}.
 Thus, the functor $\mathcal V$, given in \cite[p.~86]{AJS}, may be defined on the category
$\sC_A(\Omega)$, using for the $e_\beta(\lambda)$, on \cite[p.~86]{AJS}, generators of the
$\Ext^1$-groups described in terms of $A$ in \cite[\S\S 8.6, 8.7]{AJS}. The orbit $\Omega$ on \cite[p.~86]{AJS}
need not be regular, and this flexibility is important for discussing translation to, from, and through the
walls. The target category of $\mathcal V$ is a certain ``combinatorial category" ${\mathcal K}(\Omega,A)$
on which ``combinatorial translation functors" are defined. The ``combinatorial"
category ${\mathcal K}(\Omega,A) ={\mathcal K}(\Omega)$ is defined by analogy with
\cite[\S9.4,\S14.4]{AJS}.

(If $\Omega$ is regular, ${\mathcal K}(\Omega,A)$
is isomorphic to ${\mathcal K}(W_a, A)$ used above.) These translation functors are shown in \cite[Prop., \S10.11]{AJS} to
commute with the module theoretic translation functors. (Commutativity there implies commutativity in
our case.)

  Since $S'$ is a unique factorization
domain, so are $U^0_\sO$ and $B_\sO$ also unique factorization domains. In particular, the critical
intersection property
$$A=\bigcap_{\beta\in\Phi^+}A^\beta$$
holds for $A$ above, since it is flat over $B_\sO$. See \cite[Lem.~9.1]{AJS} and its proof.
The algebra $A^\beta$ is obtained by inverting all $H_\alpha=\frac{K_\alpha-K_\alpha^{-1}}{\zeta^{d_\alpha}-\zeta^{-d_\alpha}}\in B_\sO$ with $\beta\not=\alpha\in\Phi^+$, and setting $A^\beta=A\otimes_{B_\sO}B_\sO^\beta$. Now
\cite[Prop.~9.4]{AJS} essentially holds as stated,\footnote{In checking the analogues of \cite[\S9.4]{AJS} as well as the previous results in \S8 mentioned above, a good strategy is to read through these results and, when references are made to previous sections in \cite{AJS}, to look back at those references and check that they hold in our
context.   } and with the same proof, for any $A$ flat
over $B_\sO$, and, in particular, for $A=\widehat{S'}$.
This gives a fully faithful functor
$${\mathcal V}_\Omega:{\mathcal F}\sC_A(\Omega)\to{\mathcal K}(\Omega),$$
with $\mathcal{FC}_A(\Omega)$ denoting the category of $A$-flat $U_\sO$-modules, which are a
direct sum of weight spaces (``$X$-graded" in the sense of \cite{AJS}) with all weights in $\Omega$, and satisfying the conditions of \cite[\S2.3]{AJS} with $U$ replaced by $U_\sO$. 

The symmetric algebra $S=S({\mathbb Z}\Phi)$ is written in \cite[\S14.3]{AJS} using the symbol
$h_\alpha$ to denote a root $\alpha\in\Phi$. It is then given two different interpretations, as $\log K_\alpha$
and as $d_\alpha H_\alpha$ in ``Case 1" and  ``Case 2," respectively. We can essentially handle both
interpretations in our set-up at the same time: Fix $h_\alpha:=\log K_\alpha\in \widehat{S'}$ as in (6)
above. Then $d_\alpha H_\alpha$ differ from $h_\alpha$ only by multiplication by a unit in $\widehat{S'}$,
and a similar comparison may be made to the generators $H'_\alpha$ of $S'$. In particular, $\widehat{S'}$
is isomorphic to the completion of $S\otimes_{\mathbb Z}\sO$ with respect to its augmentation
ideal. So
$A=\widehat{S'}$ is flat over $S$, giving the nice base-change property of \cite[Lem.~14.8]{AJS} for passing from objects and morphisms of ${\mathcal K}(\Omega,S)$ to ${\mathcal K}(\Omega, A)$. Together with
the isomorphism ${\mathcal V}_\Omega$ above, this implies that ${\mathcal B}_S\otimes_SA$ is the endomorphism algebra of a projective generator of ${\mathcal C}_A(\Omega)$. Base changing
this generator from $A$ to $\sO$ shows, as in \cite[14.13(4)]{AJS}, that ${\mathcal B}_S\otimes_S\sO$ is the endomorphism algebra of a projective generator for a regular block of $\widetilde u_\zeta$-mod. See \cite[\S16.9]{AJS} for
a more complete treatment over $K$.

The algebra $\sB_S\otimes_S\sO$ retains the $\mathbb Z$-grading of $\sB_S$ and is compatible
with the grading of $\sB\otimes_S K$. The latter grading is shown to be positive and even Koszul
in \cite[\S\S18.17--8.21]{AJS}. (The hypothesis there on Lusztig conjecture holds for the quantum
case when $p>h$.) The positive grading on $\sB_S\otimes_S\sO$ transports to a positive grading
on $\widetilde u_\zeta$, with the required properties in (c). (The algebra $\widetilde u'_\zeta$ is Morita
equivalent to a product of copies of the algebra $(\sB_S\otimes_S \sO)^{\text{\rm op}}$, so it has the form $eMe$, where $M$ is a full matrix
algebra over a finite product of copies of the algebra $(\sB_S\otimes_S \sO)^{\text{\rm op}}$, and $e\in M$ is an idempotent such that $Me$ is a projective generator of
$M$-mod. Then $\gr eMe\cong e(\gr M) e\cong eMe$, the latter isomorphism coming from a similar one
for $(\sB_S\otimes_S \sO)^{\text{\rm op}})$, completing the proof.
\hskip2.5in  $\qed$

\vskip.4in

As a corollary of the proof, we have the following new result, independent of Theorem \ref{preliminaryresult}
and worthy of attention in its own right. The notations $u'_\zeta$ and $\widetilde u'_\zeta$ may
be found above Theorem \ref{preliminaryresult}. They refer to the ``regular" part of the small quantum
group $u_\zeta$ and of its integral form, respectively.

\begin{theorem}\label{newandimportant} Assume that $p>h$ is a prime. Then the algebra $\widetilde u'_\zeta$ over $\sO$ has a positive grading which base
changes to the Koszul grading on $u'_\zeta$ obtained in \cite[\S\S17-18 and p.~231]{AJS}.\end{theorem}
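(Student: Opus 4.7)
The plan is to extract the grading on $\widetilde u'_\zeta$ directly from the construction underlying the preceding proof of Theorem \ref{preliminaryresult}(c). That argument produces a specific model: a finite product $M$ of copies of $(\sB_S\otimes_S\sO)^{\text{op}}$, Morita equivalent (after passing to a full matrix ring) to the regular block part of $\widetilde u_\zeta$-mod, so that $\widetilde u'_\zeta\cong eMe$ for an idempotent $e\in M$. I would first record that $\sB_S$ carries an intrinsic $\mathbb Z$-grading from the AJS setup (with $S$ graded so that all roots have degree $2$, and with the $S$-action on $\sB_S$ graded), and hence that tensoring with $\sO$ over $S$ via the augmentation yields a $\mathbb Z$-graded $\sO$-algebra $\sB_S\otimes_S\sO$.

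Next I would verify that this grading is positive, and that its degree-zero part is semisimple over $\sO$ (a product of copies of $\sO$). Both properties descend from the corresponding facts for $\sB_S\otimes_S K$ established in \cite[\S\S17--18]{AJS}, which are valid since the Lusztig character conjecture holds for quantum groups at a $p$th root of unity for $p>h$ by \cite{T}. Because each homogeneous component of $\sB_S\otimes_S\sO$ is $\sO$-free and base-changes to the corresponding component of $\sB_S\otimes_S K$, positivity passes directly from $K$ down to $\sO$ without further work.

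Then I would transport the grading along the Morita equivalence. The idempotent $e\in M$ may be chosen in degree $0$ (as a sum of projections onto the heads of the PIMs, corresponding to the weight idempotents $e_\lambda$), so $eMe$ inherits a positive $\mathbb Z$-grading from $M$, and the natural identifications $\gr(eMe)\cong e(\gr M)e\cong eMe$---using that the grading on $\sB_S\otimes_S\sO$ is already tight in the sense that it agrees with its own associated graded---furnish a positive grading on $\widetilde u'_\zeta$. Finally, base-change compatibility $(\sB_S\otimes_S\sO)\otimes_\sO K\cong\sB_S\otimes_S K$ on the Morita side ensures that the induced grading on $u'_\zeta=K\otimes_\sO\widetilde u'_\zeta$ coincides with the Koszul grading of \cite[\S\S17--18,\, p.~231]{AJS}.

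The main obstacle is not this grading transfer (which is essentially formal once one is willing to track gradings through Morita equivalence), but rather the prior verification that the AJS deformation theory, originally formulated over $K={\mathbb Q}(\zeta)$, can be executed over the DVR $\sO={\mathbb Z}_{(p)}[\zeta]$. That verification is precisely the substance of the preceding proof of Theorem \ref{preliminaryresult}(c): it depends on the properties (1)--(6) of $S'$ and $\widehat{S'}$ recorded above, on the unique factorization of $U_\sO^0$ and $B_\sO$, and on the resulting intersection property $A=\bigcap_{\beta\in\Phi^+}A^\beta$ that makes the functor ${\mathcal V}_\Omega$ fully faithful in the integral setting. Once that infrastructure is in place, the present theorem is a clean corollary.
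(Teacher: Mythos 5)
Your proposal is correct and follows essentially the same route as the paper: the grading is extracted from $\sB_S\otimes_S\sO$, positivity is obtained by base-change comparison with $\sB_S\otimes_S K$ (where the Koszul grading is known from AJS \S\S17--18, valid for $p>h$ by Tanisaki), and the grading is transported to $\widetilde u'_\zeta \cong eMe$ via the Morita equivalence and the chain $\gr(eMe)\cong e(\gr M)e\cong eMe$. The paper likewise treats the theorem as a direct corollary of the Appendix II proof of Theorem \ref{preliminaryresult}(c), and your explicit observations (that $\sO$-freeness of the homogeneous components forces the negative-degree parts to vanish, and that $e$ may be taken in degree zero) fill in the same gaps the paper leaves implicit.
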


\bibliographystyle{amsplain}

\end{document}